\documentclass[12pt]{amsart}
\usepackage{amssymb}
\usepackage{amsmath, amscd}
\usepackage{amsthm}
\usepackage{amsmath}
\usepackage{comment}
\usepackage[table,xcdraw]{xcolor}
\usepackage[colorlinks=true, linkcolor=blue,urlcolor=blue]{hyperref}

\newtheorem{theorem}{Theorem}[section]

\newtheorem{proposition}[theorem]{Proposition}
\newtheorem{lemma}[theorem]{Lemma}

\newtheorem{corollary}[theorem]{Corollary}
\theoremstyle{definition}

\newtheorem{example}[theorem]{Example}
\newtheorem{definition}[theorem]{Definition}

\newcommand{\bigzero}{\mbox{\normalfont\Large\bfseries 0}}

\begin{document}
	
\author[P. Danchev]{Peter Danchev}
\address{Institute of Mathematics and Informatics, Bulgarian Academy of Sciences, 1113 Sofia, Bulgaria}
\email{danchev@math.bas.bg; pvdanchev@yahoo.com}	
	
\author[O. Hasanzadeh]{Omid Hasanzadeh}
\address{Department of Mathematics, Tarbiat Modares University, 14115-111 Tehran Jalal AleAhmad Nasr, Iran}
\email{o.hasanzade@modares.ac.ir; hasanzadeomiid@gmail.com}

\author[A. Moussavi]{Ahmad Moussavi}
\address{Department of Mathematics, Tarbiat Modares University, 14115-111 Tehran Jalal AleAhmad Nasr, Iran}
\email{moussavi.a@modares.ac.ir; moussavi.a@gmail.com}

\author[M. Esfandiar]{Mehrdad Esfandiar}
\address{Department of Mathematics and Computer Science Shahed University Tehran, Iran}
\email{mehrdad.esfandiar@shahed.ac.ir}

\title[Weakly $\Delta$U Rings]{A Generalization of $\Delta$U Rings}
\keywords{$\Delta(R)$, $\Delta$U-ring, W$\Delta$U-ring, Matrix ring}
\subjclass[2010]{16S34, 16U60, 20C07}

\maketitle

\date{\today}

\begin{abstract}
In this paper, we introduce and study a new class of rings calling them {\it weakly $\Delta U$-rings}, hereafter abbreviated as {\it $W\Delta U$-rings} for short. A ring $R$ is said to be $W\Delta U$ if every unit of $R$ can be expressed as $\pm 1 + d$ for some $d \in \Delta(R)$, where $\Delta(R)$ is the largest Jacobson radical of $R$ that is closed under multiplication by units.

Utilizing the known structure of $\Delta(R)$, we investigate the relationships between $W\Delta U$ rings and certain classical concepts such as $\Delta U$-rings, $UJ$-rings, $WUJ$-rings, as well as clean and exchange rings. Among the main results, we show that a matrix ring $M_n(R)$ is never $W\Delta U$ for any $n \ge 2$. We also provide complete characterizations of local, semi-local, semi-simple and semi-regular rings that are $W\Delta U$. Furthermore, it is shown for exchange rings that the $W\Delta U$ property is equivalent to being $WUJ$.

Furthermore, the behavior of $W\Delta U$-rings under various ring extensions, including skew polynomial rings, skew power series rings, triangular matrix rings, trivial extensions and group rings, is thoroughly examined. Several examples are given to illustrate that the class of $W\Delta U$-rings properly contains the class of $\Delta U$-rings. Finally, necessary and sufficient conditions for a group ring $RG$ to be $W\Delta U$ are established too.	

Resuming all of the presented above, our results expanded those by Karaba\c{c}ak et al. published in J. Algebra \& Appl. (2021).
\end{abstract}

\section{Introduction and Basic Notions}

Throughout this paper, \(R\) is always a ring with identity as it does {\it not} need to be commutative. In our work, we will use the following pretty standard symbols:

\medskip

\begin{itemize}
\item \(U(R)\): the set of units (= invertible elements) of \(R\);
\item \(Nil(R)\): the set of nilpotent elements of \(R\);
\item \(C(R)\): the center of \(R\);
\item \(Id(R)\): the set of idempotent elements of \(R\);
\item \(J(R)\): the Jacobson radical of \(R\).
\end{itemize}

\medskip

Likewise, \(M_n(R)\) stands for the ring of \(n \times n\) matrices over \(R\), and \(T_n(R)\) stands for the ring of \(n \times n\) upper triangular matrices over \(R\). A ring is called {\it abelian} if every its idempotent lies in the center, meaning that \(\operatorname{Id}(R) \subseteq C(R)\).
	
Now, let us recall some classical things that we will use explicitly henceforth: A ring \(R\) is said to be {\it Boolean} if each its element is idempotent. A ring \(R\) is called {\it regular} (that means {\it von Neumann regular}) if, for each \(a\in R\), there is some \(x\in R\) such that \(axa=a\). If, however, \(x\) is a unit, then \(R\) is termed {\it unit-regular}. Also, \(R\) is {\it strongly regular} if, for each \(a\in R\), we have \(a\in a^2R\).

In another vein, mimicking \cite{8}, a ring \(R\) is {\it exchange} if, for every \(a\in R\), there is an idempotent \(e^2=e\in aR\) such that \(1-e\in (1-a)R\). Besides, a ring \(R\) is {\it clean} if every element is the sum of an idempotent and a unit. It is well known that any clean ring is exchange, but the converse is not generally true. However, for abelian rings, this holds always.

Additionally, a ring \(R\) is {\it semi-regular} if \(R/J(R)\) is regular and all idempotents lift modulo \(J(R)\). So, semi-regular rings are exchange, but again the reverse is not true in general.
	
On another track, a ring \(R\) is named a {\it \(UU\)-ring} if \(U(R)=1+\operatorname{Nil}(R)\) (see \cite{12}). Later, Danchev individually defined in \cite{wuu} the so-called {\it weakly UU-rings} (or, shortly, just {\it WUU-rings}) via the condition \(U(R) = \operatorname{Nil}(R) \pm 1\). He showed that an exchange ring is WUU if and only if it is clean and WUU, if and only if it is strongly weakly nil-clean, if and only if \(J(R)\) is nil and \(R/J(R)\) is isomorphic to a Boolean ring \(B\), or to \(\mathbb{Z}_3\), or to \(B \times \mathbb{Z}_3\). He also proved that, for \(n \ge 2\), the matrix ring \(M_n(R)\) is never a WUU-ring.
	
Since it is always fulfilled that \(\pm 1 + J(R) \subseteq U(R)\), it is natural to look at rings in which an equality holds. In this way, a ring $R$ for which \(U(R) = 1 + J(R)\) is called a {\it \(JU\)-ring} (or simply a {\it \(UJ\)-ring}) (see, respectively, \cite{D} and \cite{14}). Later on, Danchev generalized in \cite{wuj} this notion to {\it weakly \(JU\)-rings} (or simply {\it WJU-rings}). He proved that an exchange ring \(R\) is WJU if and only if it is a clean WJU-ring, if and only if it is weakly semi-boolean, if and only if idempotents lift modulo \(J(R)\) and \(R/J(R)\) is isomorphic to a Boolean ring \(B\), or to \(\mathbb{Z}_3\), or to \(B \times \mathbb{Z}_3\).
	
On the other side, imitating Chen \cite{16}, an element is called {\it \(J\)-clean} if it can be written as an idempotent plus an element of the Jacobson radical, and a ring is {\it \(J\)-clean} if every element is \(J\)-clean. For such a ring $R$ this is equivalent to \(R/J(R)\) being Boolean and idempotents lifting modulo \(J(R)\) (such rings are also known as {\it semi-boolean}). Moreover, it was shown in \cite{16} that \(R\) is \(J\)-clean if and only if \(R\) is a clean \(UJ\)-ring.
	
The main working tool of this article is the set \(\Delta(R)\). It appeared in an exercise proposed by Lam \cite{23, lam} and was later studied by Leroy and Matczuk in \cite{2}. We know that the subring \(\Delta(R)\) is the largest Jacobson radical of \(R\) which is closed under multiplication by units as well as that \(J(R) \subseteq \Delta(R)\) and \(\Delta(R)=J(T)\), where \(T\) is the subring generated by the units of \(R\). The equality \(\Delta(R)=J(R)\) is valid precisely when \(\Delta(R)\) is an ideal of \(R\).
	
In 2021, Karaba\c{c}ak and his coauthors introduced in \cite{7} a new family of rings as a nontrivial generalization of \(UJ\)-rings. They called these {\it \(\Delta U\)-rings}: A ring \(R\) is a \(\Delta U\)-ring provided that \(1+\Delta(R)=U(R)\). They also defined {\it \(\Delta\)-clean} rings as those rings for which each element is the sum of an idempotent and an element from \(\Delta(R)\). Thereby, any \(\Delta\)-clean ring is clean, but the reciprocal implication is definitely untrue. They also showed that \(R\) is \(\Delta\)-clean if, and only if, \(R\) is a clean \(\Delta U\)-ring.
	
Motivated by all of this, as a natural extension of these ideas, we presently introduce the concept of {\it weakly \(\Delta U\)-rings} saying that a ring \(R\) is weakly \(\Delta U\) (or just a {\it $W\Delta U$-ring} for simpleness of the exposition) if every unit is either the sum or the difference of an idempotent and an element from \(\Delta(R)\). Equivalently, for each unit \(u\in U(R)\), we have \(u = \pm 1 + d\) for some \(d\in \Delta(R)\). Clearly, any \(\Delta U\)-ring is a $W\Delta U$-ring, but it will be demonstrated in the sequel that this implication is invalid in all generality.
	
Specifically, our plan how to extend the previously mentioned results is like this: In the next Section 2, we discover some background properties of being a $W\Delta U$-ring by establishing, e.g., Theorems~\ref{lad} and Proposition~\ref{mino}, respectively. In the subsequent Section 3, we achieve some results pertaining to the description of $W\Delta U$-rings -- e.g., Theorems~\ref{9}, \ref{main}, \ref{3.16} and \ref{fin2}, respectively. In the final Section 4, we deal with certain extensions of $W\Delta U$-rings such as matrix and group rings ones -- e.g., Theorem~\ref{3.29} and Proposition~\ref{torsion2}.
	
\section{Fundamental Properties of $W\Delta U$-Rings}

Now, for our further purposes, let \(R\) be a ring and let \(\alpha: R \to R\) be a ring homomorphism. The {\it skew polynomial ring} \(R[x; \alpha]\) consists of all polynomials in \(x\) with coefficients from \(R\) and with multiplication rule \(x r = \alpha(r) x\) for all \(r\in R\). If \(\alpha = 1_R\), we then just get the usual polynomial ring \(R[x]\).
	
Similarly, the {\it skew formal power series ring} \(R[[x; \alpha]]\) consists of all formal power series \(\sum_{i=0}^{\infty} r_i x^i\) with \(r_i \in R\) and with the same rule \(x r = \alpha(r) x\). When \(\alpha = 1_R\), this becomes the usual power series ring \(R[[x]]\).
	
Finally, a ring \(R\) is called {\it weakly 2-primal} provided that \(\operatorname{Nil}(R) = \operatorname{L}(R)\), where \(\operatorname{L}(R)\) is the Levitzki radical. For example, any commutative ring or reduced ring is ever weakly 2-primal.

\medskip

We begin our more concrete considerations with the following technicalities.

\begin{proposition}\label{local}
A ring $R$ is local if, and only if, \( R = U(R) \cup \Delta(R) \).
\end{proposition}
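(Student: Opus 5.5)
For the forward direction, assume $R$ is local. Then $R\setminus U(R)=J(R)$, since in a local ring the non-units form the unique maximal (left and right) ideal, which is $J(R)$. Because $J(R)\subseteq\Delta(R)$ always holds, every element is either a unit or lies in $J(R)\subseteq\Delta(R)$. Hence $R=U(R)\cup\Delta(R)$.

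For the converse, assume $R=U(R)\cup\Delta(R)$. I will show that the non-units form an ideal, using the criterion that $R$ is local exactly when $R\setminus U(R)$ is closed under addition (equivalently, when $a$ or $1-a$ is a unit for every $a$).

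Take $a\notin U(R)$. Then $a\in\Delta(R)$. By the defining property of $\Delta(R)$ (Leroy--Matczuk), $1+\Delta(R)\subseteq U(R)$ and $\Delta(R)$ is closed under multiplication by units. So $1-a\in U(R)$, which means for every $a\in R$ either $a$ or $1-a$ is a unit.

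Two details need care, since this criterion is normally stated with one-sided invertibility.

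\begin{itemize}
\item First, $U(R)$ denotes two-sided units. So I check the standard statement directly: if for every $a$, $a$ or $1-a$ is a two-sided unit, then $R$ is local.
\item Second, I show that $\Delta(R)$ contains no one-sided units other than trivial ones. Suppose $ab=1$ with $a\in\Delta(R)$. Since $\Delta(R)$ is a subring, $ba$ is an idempotent. Moreover $\Delta(R)$ is the Jacobson radical of the subring $T$ generated by $U(R)$, and it contains no nonzero idempotents except when $1\in\Delta(R)$, which is impossible because $1+(-1)=0\notin U(R)$.
\end{itemize}

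It is cleaner to argue via ideals. Set $M=R\setminus U(R)\subseteq\Delta(R)$. I claim $M$ is an additive subgroup. Take $x,y\in M$ and suppose $x+y=u$ is a unit. Then $u^{-1}x+u^{-1}y=1$. Since $\Delta(R)$ is closed under multiplication by units, $u^{-1}x\in\Delta(R)$. Therefore $u^{-1}y=1-u^{-1}x\in U(R)$, and so $y\in U(R)$, a contradiction. Thus $M+M\subseteq M$.

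Next I check that $M$ absorbs multiplication by elements of $R$. Take $m\in M$ and $r\in R$. If $r$ is a unit, then $rm\in\Delta(R)$, and $rm$ is not a unit because $m$ is not. If $r\in M$, then $rm\in\Delta(R)\cdot\Delta(R)\subseteq\Delta(R)$, since $\Delta(R)$ is a subring. An element $d\in\Delta(R)$ can still be a unit only if $1-d$ is a unit as well. In any case, once $M$ is known to be additively closed, the standard characterization applies: a ring whose non-units form an additive group is local.

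The main obstacle is the step that avoids one-sided units. I would try to handle it by showing that $M$ is exactly the set of elements with no left inverse. Suppose $ba=1$ with $a\in M$. Then $a\in\Delta(R)$, so $1-a$ is a unit. Consequently $b(1-a)=b-1$, and I would exploit this, together with $\Delta(R)$ being the radical of $T$, to conclude that $R/J(R)$ is a division ring.
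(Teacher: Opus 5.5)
Your argument is correct and is essentially the paper's: the paper likewise notes that a non-unit $a$ lies in $\Delta(R)$, so $1-a\in U(R)$, and then invokes the standard criterion for locality. Your additive-closure step ($u^{-1}x+u^{-1}y=1$) is just the usual derivation of that criterion in the equivalent form ``$R\setminus U(R)$ is closed under addition'' \cite[\S 19]{lam}, and the one-sided-unit ``obstacle'' is not real, since that criterion is stated for two-sided units. If you want to settle it by hand anyway, note that $yx=1$ with $x\in\Delta(R)$ forces $y\notin U(R)$, hence $y\in\Delta(R)$, so the idempotent $xy$ lies in $\Delta(R)$ and must be $0$, giving $1=y(xy)x=0$.
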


\begin{proof}
If \( R \) is local, then \[ R=U(R) \cup J(R) \subseteq U(R) \cup \Delta(R) \subseteq R ,\] and thus \( R=U(R) \cup \Delta(R) \).

Conversely, suppose \( R=U(R) \cup \Delta(R) \) and \( a \notin U(R) \). Then, \( a \in \Delta(R) \), and so \( 1-a \in U(R) \), whence \( R \) is a local ring.
\end{proof}

For a ring \( R \), we say that the set \( Id(R) \) (respectively, \( U(R) \) and \( \Delta(R) \)) is \textit{trivial} whenever \( Id(R) = \{0,1\} \) (respectively, \( U(R) = \{1\} \) and \( \Delta(R) = \{0\}\)).

\medskip

Recall also that a ring $R$ is said to be {\it Dedekind-finite} if $ab=1$ amounts to $ba=1$ for any $a,b\in R$. In other words, all one-sided inverses in such a ring are necessarily two-sided.

\begin{theorem}\label{lad}
Let \( R \) be a ring such that \( R = U(R) \cup \Delta(R) \cup Id(R) \). Then, exactly one of the following claims holds:

(1) \( R \) is a local ring.
	
(2) \( R \) is a Boolean ring.
	
(3) \( R \) is a non-abelian Dedekind-finite ring with characteristic \( 2 \).
\end{theorem}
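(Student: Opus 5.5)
The plan is to split according to whether $Id(R)$ is trivial and, if not, whether $R$ is abelian. Throughout I will use three standard facts about $\Delta(R)$. First, $1+\Delta(R)\subseteq U(R)$, and likewise $1-d\in U(R)$ for $d\in\Delta(R)$. Second, $\Delta(R)$ is a subring (without identity) closed under multiplication by units, in particular by $-1$. Third, $\Delta(R)\cap U(R)=\emptyset$: otherwise $1\in\Delta(R)$ and $1-1=0$ would be a unit.

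From these I first derive $\Delta(R)\cap Id(R)=\{0\}$. Indeed, if $e=e^2\in\Delta(R)$, then $1-e$ is a unit idempotent, so $1-e=1$ and $e=0$.

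\emph{Case 1: $Id(R)=\{0,1\}$.} Since $0\in\Delta(R)$ and $1\in U(R)$, we get $R=U(R)\cup\Delta(R)$. Hence $R$ is local by Proposition~\ref{local}.

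\emph{Case 2: $R$ has a nontrivial idempotent $e$.} I first show that $2=0$. The element $-e$ is not a unit, since $e\neq 1$ forces $e(1-e)=0$ with $1-e\neq0$. It is not in $\Delta(R)$, since otherwise $e=(-1)(-e)\in\Delta(R)$, contradicting $\Delta(R)\cap Id(R)=\{0\}$. Therefore $-e\in Id(R)$, and $e=(-e)^2=-e$ gives $2e=0$. Applying the same argument to $1-e$ gives $2(1-e)=0$, so $2=0$.

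\emph{Case 2a: $R$ abelian.} Write $R\cong A\times B$ with $A=eR$ and $B=(1-e)R$ both nonzero. We have $U(R)=U(A)\times U(B)$. I also expect $\Delta(A\times B)=\Delta(A)\times\Delta(B)$; this is known from Leroy--Matczuk, and I would cite it or check it quickly from the characterization $\Delta(R)=\{r: r+U(R)\subseteq U(R)\}$.

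Take $a\in A$ and look at $(a,0)$. It is never a unit, so $a\in\Delta(A)\cup Id(A)$. Next look at $(a,1)$. It is never in $\Delta(R)$, because $1\notin\Delta(B)$, so $a\in U(A)\cup Id(A)$.

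From the first inclusion, every unit of $A$ lies in $\Delta(A)\cup Id(A)$; since units avoid $\Delta(A)$, every unit is idempotent, so $U(A)=\{1\}$. Now let $d\in\Delta(A)$. Then $1+d$ is a unit, so $1+d=1$ and $d=0$. Hence $A=Id(A)$ is Boolean. The same argument applied to $B$ shows $B$ is Boolean, so $R$ is Boolean.

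\emph{Case 2b: $R$ non-abelian.} We already have characteristic $2$, so it remains to prove Dedekind-finiteness. Suppose $ab=1$. If $b^2=b$, then multiplying on the left by $a$ gives $b=ab^2=ab=1$, so $b$ is a unit. Similarly $a^2=a$ gives $a=a^2b=ab=1$.

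So if $b$ is not a unit, then neither is $a$ (because $ab=1$ with $a$ a unit makes $b=a^{-1}$). Neither is idempotent, so both lie in $\Delta(R)$. Since $\Delta(R)$ is multiplicatively closed, $1=ab\in\Delta(R)$, which is a contradiction. Hence $b\in U(R)$ and $ba=1$.

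\emph{Exclusivity.} A local ring has only trivial idempotents, so it is not case (3). A Boolean ring is commutative, so it is not case (3) either. The overlap of (1) and (2) is exactly $\mathbb{F}_2$, which must be treated as a degenerate common case; I would remark on this explicitly.

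The main obstacle is the Dedekind-finite step. A first attempt via infinite matrix units seems messy. The observation that both $a$ and $b$ are forced into $\Delta(R)$, with $\Delta(R)$ closed under products, should resolve it cleanly. The product formula for $\Delta(A\times B)$ is the other point needing care.
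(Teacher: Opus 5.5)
Your proof is correct, and it takes a genuinely different and more elementary route than the paper's. The paper splits on whether $\Delta(R)=\{0\}$. When $\Delta(R)=\{0\}$ it cites the Chen--Cui theorem that $R=U(R)\cup Id(R)$ forces $R$ to be local or Boolean. When $\Delta(R)\neq\{0\}$ and $Id(R)$ is nontrivial, it does three things:
\begin{itemize}
\item it obtains characteristic $2$ by a case analysis on whether $2$ (and then $3$) is a unit, an idempotent, or lies in $\Delta(R)$;
\item it proves non-abelianness by showing that a unit $u=1+d\neq 1$ cannot commute with a nontrivial idempotent;
\item it proves Dedekind-finiteness via $v=(1+a)(1+b)=a+b$, which needs characteristic $2$.
\end{itemize}
You instead split by trivial versus nontrivial idempotents and then by abelian versus non-abelian, and each step is shorter:
\begin{itemize}
\item Forcing $-e$ into $Id(R)$ gives $2=0$ in one line.
\item In the abelian case, the Peirce decomposition $R=eR\times(1-e)R$ together with $\Delta(A\times B)=\Delta(A)\times\Delta(B)$ yields Booleanness directly. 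This product formula is Leroy--Matczuk, Lemma~1(5), which the paper already uses in Proposition~\ref{product}. You thereby reprove the needed part of Chen--Cui instead of citing it.
\item The observation $1=ab\in\Delta(R)$ settles Dedekind-finiteness without the characteristic at all. So your argument proves the subsequent corollary for every ring with $R=U(R)\cup\Delta(R)\cup Id(R)$, without passing through the trichotomy.
\end{itemize}
What the paper's organization buys in exchange is some extra information: alternative (3) only arises when $\Delta(R)\neq\{0\}$, and then no nontrivial idempotent commutes with a unit $1+d\neq 1$.

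Your remark on exclusivity is also correct, and the paper never addresses it. $\mathbb{Z}_2$ is both local and Boolean, so ``exactly one'' must be read modulo that overlap. Alternative (3) is genuinely disjoint from (1) and (2), since local and Boolean rings are abelian.
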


\begin{proof}	
The proof proceeds by analyzing the following distinct cases:

\medskip
	
\textbf{Case 1.} Assume that $\Delta(R) = \{0\}$. Thus, $R = U(R)\cup Id(R)$. Hence, $R$ is either a local ring or a Boolean ring in accordance with \cite[Theorem 2.4]{cc}.

\medskip
	
\textbf{Case 2.} Suppose $\Delta(R) \neq \{0\}$. Selecting a nonzero element $d \in \Delta(R)$, we can write $u := 1 + d \in U(R)$, so that $U(R)$ is non-trivial.
	
\textbf{Subcase 2.1.} If $Id(R)$ is trivial, then Proposition \ref{local} applies to get $R = U(R) \cup \Delta(R)$, implying that $R$ is local.
	
\textbf{Subcase 2.2.} Suppose all three subsets $U(R)$, $\Delta(R)$ and $Id(R)$ are non-trivial. Let $e \in Id(R)$ be such that $e \not\in \{0, 1\}$, and let $u := 1 + d \in U(R)$ be as before. We now aim to show that $2 \not\in U(R)$.
	
Assume, for contradiction, that $2 \in U(R)$. Then, obviously $2e \notin U(R)$. If $2e \in Id(R)$, then $(2e)^2 = 2e$ forces $e = 0$, thus contradicting $e \not\in \{0,1\}$. Hence, $2e \in \Delta(R)$. Observe that $2$ lies in the center $C(R)$, and therefore it follows that $e \in \Delta(R)$ (because $\Delta(R)$ is closed with respect to multiplication with respect to unit elements), and hence $e=0$, contradicting again that $e \in Id(R) \setminus \{0,1\}$. Consequently, $2 \in Id(R)$ or $2 \in \Delta(R)$.
	
If $2^2 = 2$, then $2 = 0$, so $char(R) = 2$. If $2 \in \Delta(R)$, then $3 \in U(R)$. But, apparently, $3e \not\in U(R)$ as $e \neq 1$. If $3e \in \Delta(R)$, then we have $e \in \Delta(R) \cap Id(R) = \{0\}$, again a contradiction. Therefore, $3e \in Id(R)$, and $(3e)^2 = 3e$ gives $6e = 0$. Since $3 \in U(R)$, we get $2e = 0$.
	
Now, consider $1 - e$ instead of $e$. The same argument assures $2(1 - e) = 0$. Adding these two equalities yields automatically $2 = 2e + 2(1 - e) = 0$, confirming $char(R) = 2$.
	
To show $R$ is non-abelian, suppose on the contrary that $eu = ue$. Then, $ue \notin U(R)$ and $ue \notin \Delta(R)$. Thus, $(ue)^2 = ue \in Id(R)$, so $ue = e$. Similarly, $u(1 - e) = 1 - e$, whence $u = 1$, contradicting $u \neq 1$. Therefore, $R$ is non-abelian with $char(R) = 2$.
	
Finally, we show that $R$ is Dedekind-finite. To that end, let $a, b \in R$ with $ab = 1$. If $a \in \Delta(R)$, then so is $b$. Indeed, $b \not\in U(R)$ as for otherwise $a = b^{-1} \in U(R)$, and if $b \in Id(R)$, then $1 - b = ab(1 - b) = 0$ ensures $b = 1$, so that $a = 1 \in U(R)$, a contradiction. Since $char(R) = 2$, we define $v := (1 + a)(1 + b) = a + b \in U(R)$.
	
Furthermore, multiplying both sides by $b$ insures $b(a + b) = bv$. Since $bv \not\in U(R)$, and since it cannot lie in $Id(R)$ as that would lead to $b^3 = 0$ contradicting $ab = 1$, we conclude $bv, vb \in \Delta(R)$. However, $vb = (a + b)b = 1 + b^2 \in U(R)$ since $b \in \Delta(R)$, again a contradiction. Thus, $a \not\in \Delta(R)$.
	
If now $a \in U(R)$, we are done. If $a \in Id(R)$, then it must be that $$1 - a = (1 - a)(ab) = (a - a^2)b = 0,$$ and hence $a = 1$ and $ba = 1 = b$. In conclusion, $R$ is Dedekind-finite, as wanted.
\end{proof}

It is pretty evident that both local rings and Boolean rings are abelian and, consequently, they are Dedekind-finite. Hence, the following result can be derived immediately.

\begin{corollary}
If \( R = U(R) \cup \Delta(R) \cup Id(R) \), then $R$ is a Dedekind-finite ring.
\end{corollary}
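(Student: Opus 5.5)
The corollary follows by combining Theorem~\ref{lad} with the remark just before it. Assume $R = U(R) \cup \Delta(R) \cup Id(R)$. By Theorem~\ref{lad}, exactly one of three things holds: $R$ is local, $R$ is Boolean, or $R$ is a non-abelian Dedekind-finite ring of characteristic $2$. In the third case Dedekind-finiteness is part of the conclusion, so nothing more is needed. It remains to show that local rings and Boolean rings are Dedekind-finite. I will do this via abelian rings, since it is classical that every abelian ring is Dedekind-finite.

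First, both classes are abelian.
\begin{itemize}
\item A Boolean ring is commutative. Expanding $(x+y)^2 = x+y$ gives $xy + yx = 0$. Taking $y = x$ gives $2x = 0$, so $xy = yx$. In particular all idempotents are central.
\item A local ring has only the trivial idempotents. If $e \ne 0,1$, then $e$ and $1-e$ are both non-units, since $e(1-e)=0$ with both factors nonzero. Hence both lie in $J(R)$, and so does their sum $1$, which is absurd.
\end{itemize}

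Next, abelian rings are Dedekind-finite. Let $ab = 1$ and put $e = ba$. Then $e^2 = b(ab)a = ba = e$, so $e$ is idempotent and therefore central. This gives
\[
a = (ab)a = a(ba) = ae = ea = (ba)a = ba^2 .
\]
Multiplying on the right by $b$ yields $1 = ab = ba^2 b = ba(ab) = ba$, as required.

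For the local case there is also a direct route: if $ab = 1$ in a local ring, then $a$ is not in $J(R)$, since otherwise $1 = ab \in J(R)$. So $a$ is a unit, and then $b = a^{-1}$.

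No step here is a real obstacle, because all the substantive work was done in Theorem~\ref{lad}. The only point needing care is the abelian-implies-Dedekind-finite computation, which I would include in full as above rather than leave implicit.
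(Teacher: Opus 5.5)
Your proposal is correct and follows the paper's own argument. You apply Theorem~\ref{lad}, note that the third case already includes Dedekind-finiteness, and observe that local and Boolean rings are abelian and hence Dedekind-finite. The only difference is that you write out the routine verifications (Boolean rings are commutative, local rings have only trivial idempotents, abelian rings are Dedekind-finite), which the paper dismisses as ``pretty evident.''
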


We now proceed by stating the following.

\begin{definition}
A ring \( R \) is referred to as {\it weakly $\Delta$U}, abbreviated in what follows by {\it W$\Delta$U}, if the group of units satisfies
$$
U(R) = \pm1 + \Delta(R).
$$
This condition is equivalent to requiring that each unit in \( R \) can be expressed as either \( d + 1 \) or \( d - 1 \), where \( d \in \Delta(R) \).	
\end{definition}

\begin{example}
(1) The ring \( \mathbb{Z} \) of all integers is W$\Delta$U, but {\it not} $\Delta$U, because
$
U(\mathbb{Z}) = \{-1, 1\}.
$

(2) The finite ring $\mathbb{Z}_3$ is W$\Delta U$, but {\it not} $\Delta U$.
\end{example}

\begin{example}
Manifestly, every \( WJU \) ring is also a \( W\Delta U \)-ring. However, the converse does {\it not} hold. To see that, consider as done in \cite{12} the ring
$
R = \mathbb{F}_2\langle x, y \rangle / \langle x^2 \rangle.
$
In this ring, we have \( J(R) = \{0\} \), and also that
\[
\mathbb{F}_2 x + xRx \subseteq \Delta(R); \quad U(R) = 1 + \mathbb{F}_2 x + xRx.
\]
It therefore follows that \( U(R) = 1 + \Delta(R) \), and hence \( R \) is a \( \Delta U \)-ring and so a $W\Delta U$-ring. Nevertheless, one observes that \( R \) need not be a \(WJU \)-ring, as expected.
\end{example}

\begin{proposition}\label{1}
Let \(I \subseteq J(R)\) be an ideal of a ring \(R\). Then, \(R\) is W$\Delta$U if, and only if, so is \(R/I\).
\end{proposition}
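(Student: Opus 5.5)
The plan rests on the standard fact, from Leroy and Matczuk \cite{2} (see also \cite{7}), that for an ideal $I\subseteq J(R)$ one has $\Delta(R/I)=\Delta(R)/I$. Combined with it we use that units lift modulo an ideal inside the Jacobson radical: $u+I\in U(R/I)$ if and only if $u\in U(R)$. If this quotient formula is not available verbatim, I would derive it. Write $\bar{\ }$ for the projection $R\to R/I$. By the characterization $\Delta(R)=\{r\in R: r+U(R)\subseteq U(R)\}$ and the unit lifting, $\bar r\in\Delta(R/I)$ iff $\bar r+\bar u\in U(R/I)$ for all $u\in U(R)$, iff $r+u\in U(R)$ for all $u\in U(R)$, iff $r\in\Delta(R)$. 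Since $I\subseteq J(R)\subseteq\Delta(R)$, this gives exactly $\Delta(R/I)=\Delta(R)/I$. I expect this identification to be the only real obstacle, and it rests on these two routine facts.

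\emph{Necessity.} Suppose $R$ is W$\Delta$U and let $\bar u\in U(R/I)$. By unit lifting, $u\in U(R)$, so $u=\pm1+d$ with $d\in\Delta(R)$. Hence $\bar u=\pm\bar1+\bar d$, and $\bar d\in\Delta(R)/I=\Delta(R/I)$. Thus $R/I$ is W$\Delta$U.

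\emph{Sufficiency.} Suppose $R/I$ is W$\Delta$U and let $u\in U(R)$. Then $\bar u\in U(R/I)$, so $\bar u=\pm\bar1+\bar d$ with $\bar d\in\Delta(R/I)=\Delta(R)/I$. We may choose the representative $d\in\Delta(R)$. Then $u\mp1-d\in I\subseteq\Delta(R)$, and since $\Delta(R)$ is additively closed (it is a subring), $u\mp1\in\Delta(R)$. Hence $u\in\pm1+\Delta(R)$. The reverse inclusion $\pm1+\Delta(R)\subseteq U(R)$ holds in any ring, because $\Delta(R)+U(R)\subseteq U(R)$. So $U(R)=\pm1+\Delta(R)$ and $R$ is W$\Delta$U.
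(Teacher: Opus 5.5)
Your proposal is correct and follows the paper's proof essentially verbatim: units lift modulo $I\subseteq J(R)$, you use $\Delta(R/I)=\Delta(R)/I$, and the error term lying in $I$ is absorbed into $\Delta(R)$ via $I\subseteq J(R)\subseteq\Delta(R)$. The only difference is that the paper cites \cite[Proposition 6]{2} for the quotient identity, whereas you derive it directly from the characterization $\Delta(R)=\{r\in R: r+U(R)\subseteq U(R)\}$ together with unit lifting, which is a valid self-contained argument.
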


\begin{proof}
Suppose that \(R\) is a W$\Delta$U ring and choose \(u + I \in U(R/I)\). Thus, \(u \in U(R)\) whence \(u = \pm1 + d\), where \(d \in \Delta(R)\). Therefore, \[(u + I) = \pm(1+I)+(d+I),\] where \(d + I \in \Delta(R)/I = \Delta(R/I)\) in virtue of \cite[Proposition 6]{2}.
	
Conversely, suppose \(R/I\) is a W$\Delta$U ring and \(u \in U(R)\). Thus, \(u + I \in U(R/I)\) whence \[(u + I) = \pm(1 + I) + (d + I),\] where \(d + I \in \Delta(R/I)\). This means that \(u + I = \pm(1 + d) + I\) and so \[u \pm(1 + d) \in I \subseteq J(R) \subseteq \Delta(R).\] Consequently, \(u = \pm1 + d^\prime\), where \(d^\prime \in \Delta(R)\). That is why, \(R\) is a W$\Delta$U ring, as suspected.
\end{proof}

The next reduction consequence is useful.

\begin{corollary}\label{2}
A ring \(R\) is W$\Delta$U if, and only if, \(R/J(R)\) is W$\Delta$U.
\end{corollary}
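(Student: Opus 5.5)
This is an immediate specialization of Proposition~\ref{1}. We take $I := J(R)$, which is a two-sided ideal of $R$ and trivially satisfies $I \subseteq J(R)$. Proposition~\ref{1} then says directly that $R$ is W$\Delta$U if, and only if, $R/J(R)$ is W$\Delta$U. Both directions come at once, so no separate argument is needed.

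For the reader who wants the mechanism made explicit, I would recall the two facts doing the work inside Proposition~\ref{1}. The first is the equality $\Delta(R/J(R)) = \Delta(R)/J(R)$ from \cite[Proposition 6]{2}, which is valid because $J(R) \subseteq \Delta(R)$. The second is that units lift modulo $J(R)$: if $u + J(R)$ is invertible in $R/J(R)$, then $u$ is invertible in $R$. With these in hand, each direction is a one-line computation:
\begin{itemize}
\item a decomposition $u = \pm 1 + d$ in $R$ descends to one in $R/J(R)$;
\item a decomposition of $u + J(R)$ in $R/J(R)$ lifts to $u = \pm 1 + d + j$ with $j \in J(R)$. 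Here $d + j \in \Delta(R)$, since $\Delta(R)$ is additively closed and contains $J(R)$.
\end{itemize}

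The only point needing any care, and it is already handled in Proposition~\ref{1}, is the identification of $\Delta$ of the factor ring. Given the cited result, there is no real obstacle, and the corollary follows verbatim.
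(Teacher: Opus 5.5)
Your proposal is correct and is exactly the paper's route: the corollary is the special case $I = J(R)$ of Proposition~\ref{1}, whose proof rests on the same two facts you recall, namely $\Delta(R/I)=\Delta(R)/I$ and the lifting of units modulo an ideal inside $J(R)$. Your lifting step $u=\pm1+d+j$ with $d+j\in\Delta(R)$ also makes explicit the choice $d\in\Delta(R)$, which the paper's own proof of Proposition~\ref{1} leaves implicit.
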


Another type of technical assertions is as follows.

\begin{proposition}\label{product}
Let $R$ be a W$\Delta$U ring, and $S$ a $\Delta$U ring. Then, $R\times S$ is a W$\Delta$U ring.
\end{proposition}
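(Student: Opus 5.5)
The plan is to use two facts: $\Delta$ commutes with finite direct products, and in a $\Delta$U ring the element $2$ lies in $\Delta(S)$.

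First I will check that $\Delta(R\times S)=\Delta(R)\times\Delta(S)$. This follows from the Leroy--Matczuk description
$$\Delta(A)=\{a\in A : a+U(A)\subseteq U(A)\},$$
together with $U(R\times S)=U(R)\times U(S)$. Indeed, $(a,b)+U(R\times S)\subseteq U(R\times S)$ holds exactly when $a+U(R)\subseteq U(R)$ and $b+U(S)\subseteq U(S)$. This is also recorded in \cite{2}.

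Next I will show that $2\in\Delta(S)$ for any $\Delta$U ring $S$. Since $-1\in U(S)=1+\Delta(S)$, we get $-2\in\Delta(S)$. Because $\Delta(S)$ is closed under multiplication by units, $2=(-1)(-2)\in\Delta(S)$. Equivalently, $-1+\Delta(S)=1-2+\Delta(S)=1+\Delta(S)$.

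Now take $(u,v)\in U(R\times S)$. Then $u\in U(R)$, so $u=\varepsilon+d$ with $\varepsilon\in\{1,-1\}$ and $d\in\Delta(R)$. Also $v\in U(S)=1+\Delta(S)=\varepsilon+\Delta(S)$ by the previous step, so $v=\varepsilon+d'$ with $d'\in\Delta(S)$. Hence
$$(u,v)=\varepsilon(1,1)+(d,d')\in\pm1+\Delta(R\times S).$$
The reverse inclusion $\pm1+\Delta(R\times S)\subseteq U(R\times S)$ is automatic, since $\Delta$ is a Jacobson-radical-type set. So $R\times S$ is W$\Delta$U.

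The main obstacle is the sign-matching issue: the sign for $u$ is forced by $R$, and the $S$-component must be written with the same sign. Showing $2\in\Delta(S)$ resolves this; the product formula for $\Delta$ is routine.
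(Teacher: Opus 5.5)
Your proposal is correct and follows essentially the same route as the paper: decompose $(u,v)$ componentwise, take the sign $\varepsilon$ from the W$\Delta$U decomposition of $u$ in $R$, match that sign in $S$, and conclude via $\Delta(R\times S)=\Delta(R)\times\Delta(S)$, which the paper cites from Leroy--Matczuk's Lemma 1(5) and you verify directly. The only cosmetic difference is in the sign-matching step: the paper applies the $\Delta$U property to the unit $-v$, while you use $2\in\Delta(S)$ to get $1+\Delta(S)=-1+\Delta(S)$, which is the same fact stated differently.
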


\begin{proof}
Given \((u, v) \in U(R \times S) = U(R) \times U(S)\), there exists \( d \in \Delta(R) \) such that either \( u = 1 + d \) or \( u = -1 + d \). We consider the following two cases:

\medskip

\noindent\textbf{Case I.} Suppose \( u = 1 + d \). Then, since \( v \in U(S) \), there exists \( d' \in \Delta(S) \) such that \( v = 1 + d' \). Hence,
\[
(u, v) = (1, 1) + (d, d'),
\]
where \( (d, d') \in \Delta(R \times S) \) in view of \cite[Lemma 1(5)]{2}.

\medskip

\noindent\textbf{Case II.} Suppose \( u = -1 + d \). Then, since \( v \in U(S) \), there exists \( d' \in \Delta(S) \) such that \( -v = 1 + d' \), i.e., \( v = -1 + (-d') \). Therefore,
\[
(u, v) = -(1, 1) + (d, -d'),
\]
where \( (d, -d') \in \Delta(R \times S) \) by usage of the same lemma from \cite{2}.
\end{proof}

\begin{proposition}\label{productt}
Let $\{R_i\}$ be a family of rings. Then, the direct product $R=\prod R_i$ of rings $R_i$ is W$\Delta$U if, and only if, each $R_i$ is W$\Delta$U and at most one of them is {\it not} $\Delta$U.
\end{proposition}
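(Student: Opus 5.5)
The key tool is the componentwise description $\Delta(\prod R_i)=\prod \Delta(R_i)$ (the arbitrary-product version of \cite[Lemma 1(5)]{2}), together with the obvious equality $U(\prod R_i)=\prod U(R_i)$. I would first verify this description of $\Delta$ for arbitrary products, or cite it from \cite{2}. The argument is direct: $x\in\Delta(R)$ means $1-xu\in U(R)$ for all units $u$, and both the units and the condition are checked coordinatewise. Everything else then reduces to coordinatewise bookkeeping.

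\textbf{Sufficiency.} Suppose every $R_i$ is W$\Delta$U and all of them are $\Delta$U except possibly $R_{i_0}$. Take a unit $u=(u_i)$. Write $u_{i_0}=\varepsilon+d_{i_0}$ with $\varepsilon\in\{1,-1\}$ and $d_{i_0}\in\Delta(R_{i_0})$. For $i\neq i_0$, the ring $R_i$ is $\Delta$U, so $-\varepsilon u_i\in U(R_i)=1+\Delta(R_i)$. Since $\Delta(R_i)$ is closed under negation, this gives $u_i=\varepsilon+d_i$ with $d_i\in\Delta(R_i)$. Hence $u=\varepsilon\cdot 1+(d_i)$ with $(d_i)\in\Delta(R)$, exactly as in the proof of Proposition~\ref{product}. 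If every $R_i$ is $\Delta$U, simply take $\varepsilon=1$.

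\textbf{Necessity.} Suppose $R$ is W$\Delta$U.
\begin{itemize}
\item \emph{Each factor is W$\Delta$U.} Given $v\in U(R_j)$, embed it as the unit $(1,\dots,v,\dots,1)$, with $v$ in coordinate $j$ and $1$ elsewhere. Write it as $\pm1+d$ and project to coordinate $j$; projection sends $\Delta(R)$ onto $\Delta(R_j)$ by the componentwise description.
\item \emph{At most one factor is not $\Delta$U.} Suppose $R_j$ and $R_k$ with $j\neq k$ both fail to be $\Delta$U. Choose $a\in U(R_j)\setminus(1+\Delta(R_j))$ and $b\in U(R_k)\setminus(1+\Delta(R_k))$. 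Consider the unit $w$ with $a$ in coordinate $j$, $-b$ in coordinate $k$, and $1$ elsewhere. If $w=1+d$, then $a\in 1+\Delta(R_j)$, a contradiction. If $w=-1+d$, then $-b\in-1+\Delta(R_k)$, so $b\in 1+\Delta(R_k)$, again a contradiction.
\end{itemize}

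The main obstacle is justifying $\Delta(\prod R_i)=\prod\Delta(R_i)$ for infinite families, if \cite{2} only states it for finite products. I expect the coordinatewise characterization of $\Delta$ via $1-xU(R)\subseteq U(R)$ to handle this directly.
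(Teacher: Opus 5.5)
Your argument is correct and is essentially the paper's proof. For necessity you use the same test unit: a unit outside $1+\Delta$ in one coordinate, the negative of such a unit in another, and $1$ elsewhere. For sufficiency you do the same sign-matching as in Proposition~\ref{product}, but coordinatewise over the whole family, instead of first citing \cite[Proposition 2.4(6)]{7} to make $\prod_{i\neq i_0}R_i$ a $\Delta$U ring.

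One small slip: for $i\neq i_0$ you want $\varepsilon u_i\in 1+\Delta(R_i)$, not $-\varepsilon u_i$. From $-\varepsilon u_i$ you only get $u_i=-\varepsilon+d_i$, which becomes $\varepsilon+d_i'$ only by using $2\in\Delta(R_i)$ in a $\Delta$U ring, not by closure under negation.
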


\begin{proof}
($\Rightarrow$) Obviously, each $R_i$ is W$\Delta$U. Suppose now $R_{i_1}$ and $R_{i_2}$ $(i_1 \neq i_2)$ are {\it not} $\Delta$U. Then, there are two elements $u_{i_j} \in U (R_{i_j})$ with $j=1,2$ such that $u_{i_1} \in U (R_{i_1})$ and $-u_{i_2} \in U (R_{i_2})$ are both {\it not} $\Delta$U decomposition. Choosing $u=(u_i)$, where $u_i=1$ whenever $i \neq i_{j}\quad (j=1,2)$, we deduce that $u$ and $-u$ are {\it not} the sum of 1 and an element from $\Delta$, as required to get a contradiction. Consequently, each $R_i$ is a W$\Delta$U ring as at most one of them is {\it not} $\Delta$U.

($\Leftarrow$) Assume that $R_{i_0}$ is a W$\Delta$U ring and all of the other members $R_i$ are $\Delta$U. So, with \cite[Proposition 2.4(6)]{7} in mind, we have that $\prod_{i \neq i_{0}} R_i$ is $\Delta$U. Hence, owing to Proposition \ref{product}, we conclude that $R$ is a W$\Delta$U ring.
\end{proof}

Thereby, we arrive at three more direct consequences.

\begin{example}
The ring $\mathbb{Z}_{3}$ is W$\Delta$U, but the direct product $\mathbb{Z}_{3}\times \mathbb{Z}_{3}$ is {\it not} W$\Delta$U.
\end{example}

\begin{corollary}
Let $L=\prod_{i \in I} R_i$ be the direct product of rings $R_i \cong R$ and $|I| \geq 2$. Then, $L$ is a W$\Delta$U ring if, and only if, $L$ is a $\Delta$U ring if, and only if, $R$ is a $\Delta$U ring.
\end{corollary}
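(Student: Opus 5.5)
The corollary follows from Proposition~\ref{productt}, together with the fact that a direct product of $\Delta$U rings is $\Delta$U (\cite[Proposition 2.4(6)]{7}) and that each factor of a $\Delta$U product is again $\Delta$U. I will prove the cycle of implications: $L$ W$\Delta$U $\Rightarrow$ $R$ $\Delta$U $\Rightarrow$ $L$ $\Delta$U $\Rightarrow$ $L$ W$\Delta$U.

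\textbf{Step 1: $L$ W$\Delta$U implies $R$ $\Delta$U.} Suppose $L$ is W$\Delta$U. By the ($\Rightarrow$) direction of Proposition~\ref{productt}, every $R_i$ is W$\Delta$U and at most one index $i$ has $R_i$ not $\Delta$U. Since $|I|\ge 2$ and all $R_i\cong R$, being $\Delta$U is the same for every factor. So if $R$ were not $\Delta$U, at least two factors would fail to be $\Delta$U, contradicting the proposition. Hence $R$ is $\Delta$U.

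\textbf{Step 2: $R$ $\Delta$U implies $L$ $\Delta$U.} If $R$ is $\Delta$U, then each $R_i$ is $\Delta$U. By \cite[Proposition 2.4(6)]{7}, the product $L$ is $\Delta$U. If one prefers a direct check, use $U(L)=\prod U(R_i)$ and $\Delta(\prod R_i)=\prod\Delta(R_i)$ (\cite[Lemma 1(5)]{2}) componentwise.

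\textbf{Step 3: $L$ $\Delta$U implies $L$ W$\Delta$U.} This is trivial, since $1+\Delta(L)\subseteq \pm1+\Delta(L)$.

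This closes the cycle, so the three conditions are equivalent. The only step needing care is Step 1. Its key point is that isomorphic copies share the $\Delta$U property, so a single failing copy forces at least two failing copies, and $|I|\ge2$ is exactly what allows this. Everything else is a direct citation.
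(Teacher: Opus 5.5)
Your proof is correct and follows the paper's route: the paper states this corollary as a direct consequence of Proposition~\ref{productt}. Its forward direction forces the common factor $R$ to be $\Delta$U once $|I|\ge 2$, and the remaining implications come from \cite[Proposition 2.4(6)]{7} and the trivial inclusion $1+\Delta(L)\subseteq \pm1+\Delta(L)$.
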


\begin{corollary}\label{10}
For any $n \geq 2$, the ring $R^n$ is W$\Delta$U if, and only if, $R^n$ is $\Delta$U if, and only if, $R$ is $\Delta$U.
\end{corollary}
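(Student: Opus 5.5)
The statement follows from Proposition~\ref{productt} applied to the family $R_1=\dots=R_n=R$ with $n\ge 2$, together with the known fact that finite direct products of $\Delta$U rings are $\Delta$U (\cite[Proposition 2.4(6)]{7}). I will prove the cycle of implications
\[
R^n \text{ is W}\Delta\text{U} \Rightarrow R \text{ is } \Delta\text{U} \Rightarrow R^n \text{ is } \Delta\text{U} \Rightarrow R^n \text{ is W}\Delta\text{U}.
\]

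\textbf{The routine steps.} The last implication is immediate, since $1+\Delta(R^n)\subseteq \pm1+\Delta(R^n)$, so every $\Delta$U ring is W$\Delta$U. The middle implication is exactly \cite[Proposition 2.4(6)]{7}. One can also see it directly: $\Delta(R^n)=\Delta(R)^n$ by \cite[Lemma 1(5)]{2}, and $U(R^n)=U(R)^n$, so the two sets $U(R^n)$ and $1+\Delta(R^n)$ coincide coordinatewise.

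\textbf{The main step.} For the first implication, suppose $R^n$ is W$\Delta$U. By the ($\Rightarrow$) direction of Proposition~\ref{productt}, each factor is W$\Delta$U and at most one factor fails to be $\Delta$U. All factors are copies of $R$ and there are at least two of them. If $R$ were not $\Delta$U, then at least two factors would fail to be $\Delta$U, which is a contradiction. Hence $R$ is $\Delta$U.

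For a self-contained version of this step, I would argue as follows. If $R$ is not $\Delta$U but is W$\Delta$U, pick $u\in U(R)$ with $u\notin 1+\Delta(R)$. Then necessarily $u\in -1+\Delta(R)$, so $-u\in 1-\Delta(R)=1+\Delta(R)$. Now form the unit $(u,-u,1,\dots,1)\in U(R^n)$.
\begin{itemize}
\item It is not in $1+\Delta(R^n)$, because its first coordinate $u$ is not in $1+\Delta(R)$.
\item It is not in $-1+\Delta(R^n)$, because that would force $-u\in -1+\Delta(R)$, i.e.\ $u\in 1+\Delta(R)$.
\end{itemize}
This contradicts the W$\Delta$U property of $R^n$.

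The only point needing a little care is that this construction uses the coordinatewise descriptions of $U(R^n)$ and $\Delta(R^n)$, which is where \cite[Lemma 1(5)]{2} enters. I do not expect any real obstacle.
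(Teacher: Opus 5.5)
Your proof is correct and follows the paper's route. The paper states this corollary as a direct consequence of Proposition~\ref{productt} applied to $n\ge 2$ identical factors, combined with \cite[Proposition 2.4(6)]{7} and the trivial fact that $\Delta$U rings are W$\Delta$U. Your self-contained variant with the unit $(u,-u,1,\dots,1)$ is just the paper's argument for Proposition~\ref{productt} specialized to identical factors. Incidentally, that variant only uses $u\notin 1+\Delta(R)$, so the assumption that $R$ is W$\Delta$U is not needed there.
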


We now continue by proving the following statements. 

\begin{proposition}\label{corner}
For any \( e \in \text{Id}(R) \), if the ring \( R \) is W$\Delta$U, then the corner ring \( eRe \) is also W$\Delta$U.
\end{proposition}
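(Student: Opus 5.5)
Fix $e\in Id(R)$, write $f:=1-e$, and take an arbitrary unit $u\in U(eRe)$ with inverse $v\in eRe$, so that $uv=vu=e$. The plan is to lift $u$ to a unit of $R$ by padding with the complementary idempotent. Put $w:=u+f$. Since $u f = f u = 0$ and $vf=fv=0$, we get $(u+f)(v+f)=e+f=1$, and likewise $(v+f)(u+f)=1$. Hence $w\in U(R)$.

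By hypothesis $R$ is W$\Delta$U, so $w=\pm1+d$ for some $d\in\Delta(R)$. Multiplying on both sides by $e$ gives
$$u=ewe=\pm e+ede.$$
It therefore suffices to show that $ede\in\Delta(eRe)$.

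This is the key step. I would cite the known corner property from Leroy--Matczuk \cite{2}, namely $e\Delta(R)e\subseteq\Delta(eRe)$, which I believe is one of the items of their Lemma 1 or a nearby proposition. If a self-contained argument is wanted, use the characterization that $x\in\Delta(S)$ if and only if $x+U(S)\subseteq U(S)$. So take $d\in\Delta(R)$ and $t\in U(eRe)$; we must show $ede+t\in U(eRe)$. Now $t+f\in U(R)$ by the padding argument above, so $(t+f)+d\in U(R)$, since $\Delta(R)+U(R)\subseteq U(R)$.

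The difficulty is that $d$ is not corner-diagonal: $t+f+d$ has off-diagonal Peirce components $edf$ and $fde$, and we need to extract invertibility of the $(e,e)$-entry. I would handle this by replacing $d$ with its Peirce-diagonal parts. Observe that $d':=(e-f)d(e-f)$ lies in $\Delta(R)$, since $e-f$ is a unit and $\Delta(R)$ is closed under multiplication by units. Then
$$\tfrac12(d+d')=ede+fdf,$$
but this needs $2$ to be invertible and is therefore unavailable in general. The fallback is the direct citation of \cite[Lemma 1]{2} (or Proposition 6 there), which states exactly $\Delta(eRe)\supseteq e\Delta(R)e$; I expect this to be the intended route.

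Granting this, $ede\in\Delta(eRe)$, and so $u=\pm e+ede\in\pm e+\Delta(eRe)$, with $e$ the identity of $eRe$. Hence $eRe$ is W$\Delta$U.
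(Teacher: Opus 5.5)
\textbf{There is a genuine gap at the key step.} Your reduction is correct: $w=u+f\in U(R)$ and $u=ewe=\pm e+ede$. But the whole proof then rests on the general inclusion $e\Delta(R)e\subseteq\Delta(eRe)$, which you neither prove nor locate. Your own attempt stalls on the off-diagonal Peirce components $edf$, $fde$ of an arbitrary $d\in\Delta(R)$, and the averaging trick only helps when $2\in U(R)$. This inclusion is also not among the facts the paper draws from \cite{2} or \cite{7}. What is readily available is only the version for elements of $\Delta(R)$ that commute with $e$, e.g.\ elements of $\Delta(R)\cap eRe$. That version handles the case $w=1+d$, where $d=u-e\in eRe$. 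It does not apply verbatim in the case $w=-1+d$, where $d=(u+e)+2f$ is in general not in $eRe$.

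The missing observation is that you never need a statement about an arbitrary $d\in\Delta(R)$. Your particular $d=w\mp1=(u\mp e)+(1\mp1)f$ commutes with $e$, so $edf=fde=0$ and the obstacle you describe does not arise. Your own self-contained attempt then goes through:
\begin{itemize}
\item Take $t\in U(eRe)$ and set $s:=t+f+d$. Since $t+f\in U(R)$ and $d\in\Delta(R)$, we have $s\in U(R)$.
\item $s$ commutes with $e$, hence so does $s^{-1}$.
\item Therefore $ese=t+ede$ is invertible in $eRe$, with inverse $es^{-1}e$.
\end{itemize}
Thus $ede+U(eRe)\subseteq U(eRe)$, i.e.\ $ede\in\Delta(eRe)$, and $u=\pm e+ede$ is the required decomposition. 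This is exactly the paper's argument: in the case $w=-1+d$ it forms $t=u+v+3-2e$ and uses $et=te=ete$.
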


\begin{proof}
Letting \(u \in U(eRe)\), we have \(u + (1-e) \in U(R)\). Under validity of the stated hypothesis, one writes that \[(u + (1-e))= \pm1 + d \in \pm1 + \Delta(R).\] Thus, \(u -  e \in \Delta(R)\) or $u-e+2\in \Delta(R)$.
	
Now, we need to show that \(u -  e \in \Delta(eRe)\) or $u-e+2\in \Delta(eRe)$. The first claim follows with the aid of \cite[Proposition 2.6]{7}. For the second claim, we have to show that $u+2-e\in \Delta(eRe)$. To that target, let \(v\) be an arbitrary unit of \(eRe\) and hence one infers that \(v + 1 - e \in U(R)\). Note also that \(u + 2-e \in \Delta(R)\) implies \(u + 2-e + v + 1 - e \in U(R)\) under presence of the definition of \(\Delta(R)\). Now, taking \(u + v + 3-2e  = t \in U(R)\), one can inspect that \[ e t = t e = ete = u + e + v ,\] and so \( e t e \in U(eRe) \). This gives that \( u + e + U(eRe) \subseteq U(eRe) \), so that \( u + e \in \Delta(eRe) \) implying \( eRe \) is a W$\Delta$U ring, as desired.
\end{proof}

\begin{proposition}\label{matrix}
For any nonzero ring \( R \) and any natural number \( n \geq 2 \), the full matrix ring \( M_n(R) \) is not a W$\Delta$U ring.
\end{proposition}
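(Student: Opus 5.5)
The plan is to reduce to the case $n=2$ via Proposition~\ref{corner}, and then exhibit one explicit unit of $M_2(R)$ that lies in neither $1+\Delta$ nor $-1+\Delta$.

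\textbf{Reduction to $n=2$.} Put $S=M_n(R)$ and $e=E_{11}+E_{22}\in Id(S)$. Then $eSe\cong M_2(R)$. If $S$ were W$\Delta$U, Proposition~\ref{corner} would force $M_2(R)$ to be W$\Delta$U. So it is enough to show that $M_2(R)$ is not W$\Delta$U.

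\textbf{The tool.} I will use the description of $\Delta$ due to Leroy and Matczuk \cite{2}:
$$\Delta(T)=\{r\in T:\ r+U(T)\subseteq U(T)\}.$$
Consequently, to show that a given element $r$ is not in $\Delta(T)$, it suffices to find one unit $w$ such that $r+w$ is not a unit.

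\textbf{The unit and the checks.} Take the swap matrix
$$u=\begin{pmatrix}0&1\\1&0\end{pmatrix}\in U(M_2(R)),$$
which satisfies $u^2=1$. We must show that neither $u-1$ nor $u+1$ lies in $\Delta(M_2(R))$.

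For $u-1=\begin{pmatrix}-1&1\\1&-1\end{pmatrix}$, add the unit $w=\begin{pmatrix}1&-1\\0&1\end{pmatrix}$, which is upper unitriangular. This gives
$$(u-1)+w=\begin{pmatrix}0&0\\1&0\end{pmatrix}=E_{21}.$$

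For $u+1=\begin{pmatrix}1&1\\1&1\end{pmatrix}$, add the unit $w'=\begin{pmatrix}-1&-1\\0&-1\end{pmatrix}$, which is the negative of an upper unitriangular matrix. This gives
$$(u+1)+w'=E_{21}.$$

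Since $R\neq 0$, $E_{21}$ is a nonzero nilpotent and hence not a unit. Therefore $u-1\notin\Delta(M_2(R))$ and $u+1\notin\Delta(M_2(R))$. This means $u\notin \pm1+\Delta(M_2(R))$, so $M_2(R)$ is not W$\Delta$U, which completes the proof.

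\textbf{Main obstacle.} The only delicate point is the characterization of $\Delta$ as the set of elements $r$ with $r+U\subseteq U$. If one prefers not to quote it from \cite{2}, I would instead use only that $\Delta(T)$ is closed under multiplication by units and that $1+\Delta(T)\subseteq U(T)$. Suppose $u-1=d\in\Delta$. Then $w^{-1}d\in\Delta$, so $1+w^{-1}d\in U$, and hence $w+d=w(1+w^{-1}d)\in U$. This contradicts the computation above, and the same argument handles $u+1$.
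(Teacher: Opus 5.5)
Your proof is correct. It shares the paper's skeleton (reduce to $M_2(R)$ via Proposition~\ref{corner}, then exhibit one unit outside $\pm1+\Delta$). It differs in the test unit and in how you show that $u\pm1\notin\Delta(M_2(R))$.

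The paper takes $A=\begin{pmatrix}0&1\\1&1\end{pmatrix}$. For this matrix both $A+I$ and $A-I$ are invertible over every ring, with determinants $1$ and $-1$ and integer inverses. So the paper only needs the fact that $\Delta$ contains no units: if $v\in\Delta\cap U$, then $1=v^{-1}v\in\Delta$, which is impossible since $1+(-1)=0\notin U$.

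Your swap matrix is the more natural involution, but that shortcut is unavailable for it. Since $(u-1)(u+1)=u^2-1=0$ with both factors nonzero, $u\pm1$ are never units, and when $2=0$ they coincide and square to zero. You therefore have to produce explicit witnesses $w,w'\in U$ with $(u\pm1)+w\notin U$. Your computations giving $E_{21}$ are right. Your fallback argument, $w+d=w(1+w^{-1}d)\in U$ for $d\in\Delta$, correctly derives the needed property from closure under unit multiplication and $1+\Delta\subseteq U$.

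In short, the paper's choice of $A$ reduces the key step to a one-line observation. Yours shows the general technique for proving that a non-unit, even a nilpotent one, lies outside $\Delta$. Both routes are equally elementary.
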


\begin{proof}
Since \( M_2(R) \) is isomorphic to a corner subring of \( M_n(R) \), it suffices to establish in conjunction with Proposition \ref{corner} that \( M_2(R) \) is not a W$\Delta$U ring. To this goal, considering the matrix unit
$
A = \begin{pmatrix} 0 & 1 \\ 1 & 1 \end{pmatrix},
$
we observe that the sum
\[
A + I = \begin{pmatrix} 0 & 1 \\ 1 & 1 \end{pmatrix} + \begin{pmatrix} 1 & 0 \\ 0 & 1 \end{pmatrix} = \begin{pmatrix} 1 & 1 \\ 1 & 2 \end{pmatrix},
\]
which is a unit and hence cannot be in $\Delta(R)$. Furthermore, the difference
\[
A - I = \begin{pmatrix} 0 & 1 \\ 1 & 1 \end{pmatrix} - \begin{pmatrix} 1 & 0 \\ 0 & 1 \end{pmatrix} = \begin{pmatrix} -1 & 1 \\ 1 & 0 \end{pmatrix},
\]
which is also a unit and thus not in $\Delta(R)$ either. Consequently, we derive that \( M_2(R) \) is not a W$\Delta$U ring.
\end{proof}

A set $\{e_{ij} : 1 \le i, j \le n\}$ of nonzero elements of $R$ is said to be a {\it system of $n^2$ matrix units} provided $e_{ij}e_{st} = \delta_{js}e_{it}$, where $\delta_{jj} = 1$ and $\delta_{js} = 0$ for $j \neq s$. In this case, $e := \sum_{i=1}^{n} e_{ii}$ is an idempotent of $R$ and $eRe \cong M_n(S)$, where $$S = \{r \in eRe : re_{ij} = e_{ij}r,~~\textrm{for all}~~ i, j = 1, 2, . . . , n\}.$$

\medskip

The next affirmation is helpful as well.

\begin{proposition}\label{dedekind}
Every W$\Delta$U ring is Dedekind-finite.
\end{proposition}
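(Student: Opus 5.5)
Suppose $R$ is W$\Delta$U and let $a,b\in R$ with $ab=1$. The plan is the standard one: if $ba\neq 1$, build a system of matrix units, so that a nonzero corner of $R$ is a $2\times 2$ matrix ring, and then contradict Propositions \ref{corner} and \ref{matrix}.

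Set $e:=1-ba$. From $ab=1$ we get $e^2=e$, together with $ae=0$ and $eb=0$. Assume, for contradiction, that $e\neq 0$. Following the classical argument (Jacobson), define
\[
e_{ij}:=b^{i-1}e\,a^{j-1}\qquad (i,j\ge 1).
\]
Using $ab=1$, $ae=0$ and $eb=0$, one checks that $e_{ij}e_{st}=\delta_{js}e_{it}$. All these elements are nonzero. Indeed, if $b^{i-1}ea^{j-1}=0$, then multiplying on the left by $a^{i-1}$ and on the right by $b^{j-1}$ gives $e=0$.

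In particular, $\{e_{ij}:1\le i,j\le 2\}$ is a system of $4$ matrix units. Put $f:=e_{11}+e_{22}$. By the remark preceding the proposition, $fRf\cong M_2(S)$ for the ring $S$ described there. The ring $S$ is nonzero, since $f\neq 0$ and $f$ is the identity of $S$; if $S$ were zero, $M_2(S)$ would be zero too.

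By Proposition \ref{corner}, the corner $fRf$ is W$\Delta$U, so $M_2(S)$ is W$\Delta$U. This contradicts Proposition \ref{matrix}. Hence $e=0$, that is, $ba=1$, and $R$ is Dedekind-finite.

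The only real obstacle is the verification that the $e_{ij}$ form matrix units and are nonzero. The key identities are $e_{ij}e_{st}=b^{i-1}e\,a^{j-1}b^{s-1}e\,a^{t-1}$ together with
\[
a^{j-1}b^{s-1}=\begin{cases} b^{s-j} & \text{if } s\ge j,\\ a^{j-s} & \text{if } s<j.\end{cases}
\]
Then $e\,b^{k}e=0$ and $e\,a^{k}e=0$ for $k\ge 1$, while $e\cdot 1\cdot e=e$. Once these are in place, the rest of the argument is formal.
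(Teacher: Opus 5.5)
Your proof is correct and is essentially the paper's argument: Jacobson's matrix units built from the idempotent $1-ba$ (the paper uses $b^{i}(1-ba)a^{j}$, you use the shifted version $b^{i-1}(1-ba)a^{j-1}$), a nonzero corner isomorphic to $M_2(S)$ with $S\neq 0$, and then a contradiction via Propositions \ref{corner} and \ref{matrix}. Your explicit checks that $e_{ij}e_{st}=\delta_{js}e_{it}$ and $e_{ij}\neq 0$ supply details the paper leaves to the reader.
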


\begin{proof}
If we assume on the contrary that $R$ is {\it not} a Dedekind-finite ring, then there exist elements $a, b \in R$ such that $ab = 1$ but $ba \neq 1$. Assuming $e_{ij} = b^i(1-ba)a^j$ and $e =\sum_{i=1}^{n}e_{ii}$, there exists a nonzero ring $S$ such that $eRe \cong M_n(S)$. However, according to Proposition \ref{corner}, $eRe$ is a W$\Delta$U ring, whence $M_n(S)$ must also be a W$\Delta$U ring, thus contradicting Proposition \ref{matrix}.
\end{proof}

\begin{proposition}\label{subring}
Let \(R\) be a W$\Delta$U ring. For an unital subring \(S\) of \(R\), if \(S \cap \Delta(R) \subseteq \Delta(S)\), then \(S\) is a W$\Delta$U ring. In particular, the center of \(R\) is always a W$\Delta$U ring.
\end{proposition}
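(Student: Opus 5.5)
The first claim follows directly from the definition. Take $u \in U(S)$. Because $S$ is a unital subring of $R$, its identity and that of $R$ coincide, so $S^{-1}$-inverses are inverses in $R$ and $U(S) \subseteq U(R)$. Since $R$ is W$\Delta$U, we can write $u = \varepsilon + d$ with $\varepsilon \in \{1,-1\}$ and $d \in \Delta(R)$. Then $d = u - \varepsilon$ is a difference of two elements of $S$, since $\pm 1 \in S$, so $d \in S \cap \Delta(R)$. The hypothesis $S \cap \Delta(R) \subseteq \Delta(S)$ gives $d \in \Delta(S)$. Hence $u \in \pm 1 + \Delta(S)$, so $U(S) \subseteq \pm 1 + \Delta(S)$.

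For the reverse inclusion, let $d \in \Delta(S)$. By the definition of $\Delta(S)$, we have $d + U(S) \subseteq U(S)$, so $\pm 1 + d \in U(S)$. (For $-1+d$, note $-1 \in U(S)$.) Therefore $U(S) = \pm 1 + \Delta(S)$, and $S$ is W$\Delta$U.

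For the statement about the center, set $S = C(R)$, which is a unital subring. By the first part, it suffices to show $C(R) \cap \Delta(R) \subseteq \Delta(C(R))$. Take $d \in C(R) \cap \Delta(R)$ and $v \in U(C(R))$. We need $d + v \in U(C(R))$.

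First, $v \in U(R)$, so $d + v \in U(R)$, because $\Delta(R) + U(R) \subseteq U(R)$. Next, $d + v$ is central, and the inverse of a central unit is central: from $x(d+v) = (d+v)x$ for all $x$ we get $(d+v)^{-1}x = x(d+v)^{-1}$. So $(d+v)^{-1} \in C(R)$ and $d + v \in U(C(R))$. Hence $d \in \Delta(C(R))$, and the first part shows that $C(R)$ is W$\Delta$U.

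The only step needing care is this centrality of inverses of central units. It is standard, and I expect no real obstacle.
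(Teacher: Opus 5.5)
Your proof is correct and follows the paper's argument: a unit $u$ of $S$ is a unit of $R$, so $u\mp 1\in S\cap\Delta(R)\subseteq\Delta(S)$. The only difference is that for the center you prove $C(R)\cap\Delta(R)\subseteq\Delta(C(R))$ directly, using that the inverse of a central unit is central, whereas the paper simply cites Leroy--Matczuk \cite[Corollary 8]{2}; you also spell out the trivial reverse inclusion $\pm1+\Delta(S)\subseteq U(S)$, which the paper leaves implicit.
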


\begin{proof}
Choosing \(v \in U(S)\) \(\subseteq U(R)\), since \(R\) is W$\Delta$U, we have
\[
v\pm1 \in \Delta(R) \cap S \subseteq \Delta(S).
\] So, \(S\) has to be a W$\Delta$U ring. Now, the rest of our claim follows directly from \cite[Corollary 8]{2}.
\end{proof}

\begin{lemma}\label{reduced}
Let \(R\) be a W$\Delta$U ring. If \(J(R) = \{0\}\) and each nonzero right ideal of \(R\) contains a nonzero idempotent, then \(R\) is reduced.
\end{lemma}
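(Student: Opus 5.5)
The plan is to follow the standard argument for \(UJ\)- and \(\Delta U\)-rings: suppose \(R\) is not reduced, extract from a nonzero nilpotent a corner isomorphic to a \(2\times 2\) matrix ring, and then invoke Propositions~\ref{corner} and~\ref{matrix}. First, if \(R\) is not reduced, pick \(0\neq a\in R\) with \(a^2=0\). The right ideal \(aR\) is nonzero, so by hypothesis it contains a nonzero idempotent \(e=ar\). Put \(b:=e\cdot a\) (or work directly with \(e\) and \(a\)). The aim is a nonzero idempotent \(f\) together with elements playing the role of matrix units \(e_{11}, e_{12}, e_{21}, e_{22}\), so that \(fRf\cong M_2(S)\) for some nonzero ring \(S\), using the matrix-unit remark stated just before Proposition~\ref{dedekind}.

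\textbf{Constructing the matrix units.} We have \(e=ar\), \(e^2=e\), and \(ea=ara\). Set \(e_{12}:=e a\)-type elements and \(e_{21}:=r e\)-type elements. More precisely, let \(p:=ar\) and \(q:=ra\) (after replacing \(r\) by \(rar\) we get \(r=rar\)). Then \(q\) is idempotent and \(pq=arra\cdot\)... The cleaner route is the classical one: \(e\in aR\) gives \(e=ae\cdot\) something, and \(ae\)-type computations use \(a^2=0\) to show \(eRe\) and \((1-e)\)-components are orthogonal. Concretely, take \(x:=ea\in eR\) and \(y:=re\), replaced by \(y:=r e\,x r\) if needed, so that \(xy=e\). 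Then \(f:=yx\) is an idempotent with \(ef=0=fe\): indeed \(ea\,e=ea\,ar\cdots\) vanishes because \(a^2=0\) forces \(a\,e=a\,ar=0\). So \(e_{11}=e\), \(e_{22}=f\), \(e_{12}=x\), \(e_{21}=y\) form a system of \(2^2\) matrix units, all nonzero since \(e\neq 0\).

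Granting this, \(g:=e+f\) is an idempotent with \(gRg\cong M_2(S)\) and \(S\neq 0\). By Proposition~\ref{corner} \(gRg\) is W\(\Delta\)U, which contradicts Proposition~\ref{matrix}. Hence \(R\) is reduced.

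\textbf{Main obstacle.} The hard part is verifying the matrix-unit relations exactly; the key input is \(ae=a\cdot ar=0\). Where the hypothesis \(J(R)=0\) enters is in ensuring the elements are nonzero, or in reducing to a case where \(aRa\neq 0\): if \(aRa=0\) then \(aR\) is a nilpotent right ideal, contained in \(J(R)=0\), so \(a=0\). I would first try to use this to choose \(a\) with \(ae\neq 0\) appropriately. An alternative, if the units do not close up cleanly, is to use instead that \(1+a\) and \(-1+a\) are units. By the W\(\Delta\)U property one of \(a,\,a\pm 2\) lies in \(\Delta(R)\); one would then show that a nonzero idempotent \(e\in aR\) would have to lie in \(\Delta(R)\), giving \(e=0\).
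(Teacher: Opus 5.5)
Your overall plan is the same as the paper's. The paper obtains an idempotent $e\in RaR$ with $eRe\cong M_2(T)$, $T\neq 0$, by citing Levitzki's Theorem~2.1, and then applies Propositions~\ref{corner} and~\ref{matrix}. You instead try to build the matrix units by hand, and the construction as written fails at the claim $ef=0$.

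From $ae=a\cdot ar=0$ you only get $xe=eae=0$, and hence $fe=yxe=0$. The other product is $ef=e\,re\,a$, which need not vanish. Concretely, in $M_2(k)$ take $a=E_{12}$ and $r=E_{11}+E_{21}$. Then $e=ar=E_{11}$, $x=ea=E_{12}$, $y=re=E_{11}+E_{21}$, and $xy=e$. But $f=yx=E_{12}+E_{22}$ has $ef=E_{12}\neq 0$, $y^2=y\neq 0$, and $e+f$ is not even idempotent.

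Your proposed replacement $y:=rexr$ does not help, since $rexr=re\,a\,r=re\,e=re$. The fallback via $\Delta(R)$ is also not viable as sketched. $\Delta(R)$ is closed only under multiplication by units, so $a\in\Delta(R)$ (or $a+2\in\Delta(R)$) says nothing about $ar$.

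The missing step is easy to supply: take $y':=(1-e)re$.
\begin{itemize}
\item Because $x(1-e)=x-eae=x$, you still have $xy'=xy=e$; now also $ey'=0$ and $y'e=y'$.
\item With $f:=y'x$ one checks $f^2=y'(xy')x=y'ex=f$ and $ef=0=fe$.
\item Further, $x^2=e(ae)a=0$, $y'^2=(1-e)r\,e(1-e)\,re=0$, $xf=xy'x=x$ and $fy'=y'xy'=y'$; the remaining products vanish similarly.
\item Hence $e,x,y',f$ form a system of $2^2$ matrix units. They are nonzero because $xy'=e$ and $xfy'=e$.
\item The ring $S$ from the remark before Proposition~\ref{dedekind} contains the identity $e+f\neq 0$ of the corner, so $S\neq 0$.
\end{itemize}

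With this repair your argument is a self-contained substitute for the citation of Levitzki's theorem. It also shows that the hypothesis $J(R)=0$ is never used. That hypothesis is in any case implied by the idempotent hypothesis, since $J(R)$ contains no nonzero idempotent.
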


\begin{proof}
Suppose that the contradiction \(R\) is non-reduced holds. Then, there is a nonzero element \(a \in R\) such that \(a^2 = 0\). With \cite[Theorem 2.1]{3} at hand, there is an idempotent \(e \in RaR\) such that \(eRe \cong M_2(T)\) for some non-trivial ring \(T\). Next, Proposition~\ref{corner} informs us that $eRe$ is a W$\Delta$U ring, whence $M_2(T)$ is a W$\Delta$U ring. This, however, contradicts Proposition~\ref{matrix} substantiating our claim.
\end{proof}

\begin{proposition}\label{memeber}
Let \( R \) be a W$\Delta$U ring. Then, the following two equivalencies are true:

(1) \( 3 \in U(R) \iff 4 \in \Delta(R) \).

(2) \( 2 \in U(R) \iff 3 \in \Delta(R) \).

In particular, if \(3 \in \Delta(R)\), then \(Id(R) = \{0, 1\}\).
\end{proposition}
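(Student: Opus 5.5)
Both equivalences follow one pattern: use the defining decomposition $U(R)=\pm1+\Delta(R)$ in one direction, and the fact that $1+\Delta(R)\subseteq U(R)$ in the other. We rely on two standard facts about $\Delta(R)$ (Leroy--Matczuk \cite{2}): it is closed under addition and contains no units, and $1+\Delta(R)\subseteq U(R)$, hence also $-1+\Delta(R)\subseteq U(R)$.

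\textbf{(1).} Suppose $3\in U(R)$. The W$\Delta$U property gives $3-1=2\in\Delta(R)$ or $3+1=4\in\Delta(R)$. In the second case we are done. In the first case, $2\in\Delta(R)$ and additivity give $4=2+2\in\Delta(R)$. Conversely, if $4\in\Delta(R)$, then $3=-1+4\in -1+\Delta(R)\subseteq U(R)$.

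\textbf{(2).} Suppose $2\in U(R)$. Then $2-1=1\in\Delta(R)$ or $2+1=3\in\Delta(R)$. The first option is impossible, since $1$ is a unit and $\Delta(R)$ contains no units. Hence $3\in\Delta(R)$. Conversely, if $3\in\Delta(R)$, then $2=-1+3\in U(R)$.

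\textbf{The ``in particular''.} Let $3\in\Delta(R)$. By (2), $2\in U(R)$. Take $e\in Id(R)$. Since $2$ is a central unit, $1-2e$ is a unit, with $(1-2e)^2=1$. The W$\Delta$U property gives two cases.
\begin{itemize}
\item[(a)] $1-2e=1+d$ with $d\in\Delta(R)$. Then $2e\in\Delta(R)$. Since $\Delta(R)$ is closed under multiplication by units, $e=2^{-1}(2e)\in\Delta(R)$. The element $1-e$ is then a unit and an idempotent, so $1-e=1$, i.e.\ $e=0$.
\item[(b)] $1-2e=-1+d$ with $d\in\Delta(R)$. Then $2-2e=2(1-e)\in\Delta(R)$. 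The same argument applied to $1-e$ gives $1-e=0$, i.e.\ $e=1$.
\end{itemize}
Hence $Id(R)=\{0,1\}$.

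The only delicate step is the one used in case (a): passing from $2e\in\Delta(R)$ to $e\in\Delta(R)$ through the unit $2^{-1}$, which is exactly the closure of $\Delta(R)$ under multiplication by units. The last step uses that an idempotent unit must equal $1$.
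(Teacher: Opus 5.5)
Your proof is correct and follows the paper's argument essentially step for step. You use the same case split from $U(R)=\pm1+\Delta(R)$ for (1) and (2), and the same analysis of the unit $2e-1$ (your $1-2e$ just swaps the two cases) via $e=2^{-1}(2e)\in\Delta(R)$. The differences are minor: you check directly that an idempotent in $\Delta(R)$ vanishes (via $1-e\in 1+\Delta(R)\subseteq U(R)$), where the paper cites Leroy--Matczuk. For the converse directions you invoke $-1+\Delta(R)\subseteq U(R)$, which is the fact actually needed; the paper's text cites $1+J(R)$ there.
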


\begin{proof}
(1) Since \( 1 + J(R) \subseteq U(R) \), the implication \( \Leftarrow \) is immediate.
	
For the converse \( \Rightarrow \), assume \( 3 \in U(R) \). Then, because \( R \) is W$\Delta$U, we can write either
\[
3 = 1 + d \quad \text{or} \quad 3 = -1 + d
\]
for some \( d \in \Delta(R) \). In the first case, \( d = 2 \in \Delta(R) \), and, in the second case, \( d = 4 \in \Delta(R) \).
	
Since $\Delta(R)$ is a subring of $R$, it follows that \( 4 \in \Delta(R) \), as pursued.
	
(2) Suppose \( 3 \in \Delta(R) \). Since \( 1 + J(R) \subseteq U(R) \), it follows that \( 2 = -1 + 3 \in U(R) \).
	
Conversely, assume \( 2 \in U(R) \). Then, as \( R \) is W$\Delta$U, we may write either
\[
2 = 1 + d \quad \text{or} \quad 2 = -1 + d
\]
for some \( d \in \Delta(R) \). The first case leads to \( d = 1 \in \Delta(R) \), which is a contradiction. The second case leads to \( d = 3 \in \Delta(R) \), as needed.

Treating now the second part, let $e$ be an idempotent element of $R$. It is easy to verify that $2e-1$ is a unit. Hence, we have either $2e-1=d-1$ or $2e-1=d+1$ for some $d\in \Delta(R)$.

If, foremost, $2e-1=d-1$, then $2e=d$, and so $e=2^{-1}d\in \Delta(R)$ thanks to \cite[Lemma 1(2)]{2}. Next, knowing \cite[Proposition 15]{2}, this allows us to detect that $e=0$.

If, however, $2e-1=d+1$, then $2e=d+2$, and thus $$e=2^{-1}d+1\in 1+\Delta(R)\subseteq U(R),$$ which means that $e=1$.	 \end{proof}

An applicable consequence is the following one.

\begin{corollary}
If \( R \) is a W$\Delta$U ring in which \( 10 \in \Delta(R) \), then \( 4 \in \Delta(R) \).
\end{corollary}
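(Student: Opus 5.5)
We argue in the subring $\Delta(R)$, using only that it is closed under addition and subtraction together with part (1) of Proposition~\ref{memeber}. The idea is that $10\in\Delta(R)$ makes $3$ a unit, part (1) then puts $4$ in $\Delta(R)$, and it only remains to confirm the direct route.

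\textbf{Step 1: $3$ is a unit.} Since $10\in\Delta(R)$, the element $9=-1+10$ lies in $-1+\Delta(R)\subseteq U(R)$. Here we use that $\Delta(R)$ is closed under negation, so that $1+\Delta(R)\subseteq U(R)$ gives $-1+\Delta(R)=-(1+\Delta(R))\subseteq U(R)$. As $9=3\cdot 3$ with $3$ central, $3$ is invertible with inverse $3\cdot 9^{-1}$, so $3\in U(R)$.

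\textbf{Step 2: apply Proposition~\ref{memeber}(1).} Since $3\in U(R)$, that proposition yields $4\in\Delta(R)$, which is the claim. To avoid relying on its slightly terse proof, we check the step directly. By the W$\Delta$U property, $3=1+d$ or $3=-1+d$ for some $d\in\Delta(R)$, so $2\in\Delta(R)$ or $4\in\Delta(R)$. If $4\in\Delta(R)$ we are done. If $2\in\Delta(R)$, then $4=2+2\in\Delta(R)$, since $\Delta(R)$ is additively closed.

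\textbf{Main obstacle.} The only point needing care is Step 1, namely passing from $9\in U(R)$ to $3\in U(R)$. This works because $3$ is central and $3\cdot(3\cdot 9^{-1})=(3\cdot 9^{-1})\cdot 3=1$. Everything else is additive closure of the subring $\Delta(R)$.
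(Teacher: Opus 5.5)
Your proof is correct and follows the paper's argument. The paper also produces a unit from $10\in\Delta(R)$ (it uses $-9=1-10$, you use $9=-1+10$), deduces $3\in U(R)$, and applies Proposition~\ref{memeber}(1); your direct check of that proposition (the case $2\in\Delta(R)$ giving $4=2+2\in\Delta(R)$) just spells out the step the paper compresses into ``$\Delta(R)$ is a subring.''
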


\begin{proof}
Since \( -10 \in \Delta(R) \) and \( 1 + \Delta(R) \subseteq U(R) \), it follows that \( -9 \in U(R) \). Hence, \( 9 = (-3)^2 \in U(R) \) meaning that \( 3 \in U(R) \). Therefore, Proposition~\ref{memeber} works to get that \( 4 \in \Delta(R) \), as claimed.
\end{proof}

\begin{proposition}\label{mino}
A ring \(R\) is $\Delta$U if, and only if, \(R\) is W$\Delta$U and \(2 \in \Delta(R)\).
\end{proposition}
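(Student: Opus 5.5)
We prove the two implications separately, using only the definition of $\Delta(R)$ and the fact that $\Delta(R)$ is a subring of $R$ with $1+\Delta(R)\subseteq U(R)$, as recalled in the introduction.

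\textbf{($\Rightarrow$).} Suppose $R$ is $\Delta$U. Then every unit has the form $1+d$ with $d\in\Delta(R)$, so $R$ is W$\Delta$U trivially. For $2\in\Delta(R)$, note that $-1\in U(R)$. Hence $-1=1+d$ for some $d\in\Delta(R)$, that is, $-2=d\in\Delta(R)$. Since $\Delta(R)$ is closed under additive inverses, $2\in\Delta(R)$.

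\textbf{($\Leftarrow$).} Suppose $R$ is W$\Delta$U and $2\in\Delta(R)$. Take $u\in U(R)$. By hypothesis, either $u=1+d$ or $u=-1+d$ with $d\in\Delta(R)$. In the first case there is nothing to prove. In the second case we rewrite
\[
u=-1+d=1+(d-2).
\]
Because $\Delta(R)$ is additively closed and both $d$ and $2$ lie in it, $d-2\in\Delta(R)$. So every unit lies in $1+\Delta(R)$. The reverse inclusion $1+\Delta(R)\subseteq U(R)$ always holds, so $U(R)=1+\Delta(R)$ and $R$ is $\Delta$U.

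No step is a real obstacle. The only point to be careful about is that $\Delta(R)$ is an additive subgroup, which holds because it is a subring (Leroy--Matczuk \cite{2}). That is what lets us use $-2\in\Delta(R)\Rightarrow 2\in\Delta(R)$ and $d,2\in\Delta(R)\Rightarrow d-2\in\Delta(R)$.
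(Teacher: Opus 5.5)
Your proof is correct and follows the paper's argument. The sufficiency direction is exactly the paper's rewriting $u=-1+d=1+(d-2)$ with $d-2\in\Delta(R)$. For the necessity, the paper simply cites \cite[Proposition~2.4(1)]{7}, whereas you derive $2\in\Delta(R)$ directly from writing $-1=1+d$, which makes that step self-contained.
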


\begin{proof}
The necessity follows immediately from \cite[Proposition~2.4(1)]{7}.

To prove the sufficiency, choose \(u \in R\) to be arbitrary. By assumption, there is an element
\(d \in \Delta(R)\) such that
\[
u = d + 1 \quad \text{or} \quad u = d - 1.
\]
In the latter case, we may rewrite
\[
u = d - 1 = (d - 2) + 1.
\]
Since \(\Delta(R)\) is a subring of $R$, it follows at one that \(d - 2 \in \Delta(R)\). Hence, in either case, \(u\) can be expressed in the required form, as asserted.
\end{proof}

\section{Main Results}

Our initial characterizing statement is the following.

\begin{proposition}\label{division}
Let \( R \) be a ring. Then, the following four conditions are valid:
	
(1) A division ring \( R \) is W$\Delta$U if, and only if, \( R \cong \mathbb{Z}_2 \) or \( \mathbb{Z}_3 \).
	
(2) A local ring \( R \) is W$\Delta$U if, and only if, either \( R/J(R) \cong \mathbb{Z}_2 \) or \(R/J(R) \cong  \mathbb{Z}_3 \).
	
(3) A semi-simple ring $R$ is W$\Delta$U if, and only if, $R \cong \bigoplus_{i=1}^n R_i$, where, for just only one index $i$, $R_i\cong \mathbb{Z}_3 \text{ and } R_j\cong \mathbb{Z}_2$ for any other index $j\neq i$.
	
(4) A semi-local ring $R$ is W$\Delta$U if, and only if, $R/J(R) \cong \bigoplus_{i=1}^m R_i$, where, for just only one index $i$, $R_i\cong \mathbb{Z}_3 \text{ and } R_j\cong \mathbb{Z}_2$ for any other index $j\neq i$.	
\end{proposition}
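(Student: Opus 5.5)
For (1), I will first show that $\Delta(D)=\{0\}$ for a division ring $D$. Indeed, $\Delta(D)$ is a Jacobson-type radical closed under multiplication by units. If $0\neq d\in\Delta(D)$, then $d$ is a unit, so $1=d^{-1}d\in\Delta(D)$. Then $1-1=0$ would have to be a unit, which is absurd. (Alternatively, a nonzero idempotent cannot lie in $\Delta$, by \cite[Proposition 15]{2}.) Hence the W$\Delta$U condition reads $U(D)=D\setminus\{0\}\subseteq\{1,-1\}$, so $|D|\le 3$, giving $D\cong\mathbb{Z}_2$ or $\mathbb{Z}_3$. Conversely, in both $\mathbb{Z}_2$ and $\mathbb{Z}_3$ every unit is $\pm1$, so these rings are W$\Delta$U.

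For (2), I will combine Corollary~\ref{2} with the fact that $R/J(R)$ is a division ring when $R$ is local. Then $R$ is W$\Delta$U iff $R/J(R)$ is W$\Delta$U, iff (by (1)) $R/J(R)\cong\mathbb{Z}_2$ or $\mathbb{Z}_3$.

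For (3), Wedderburn--Artin gives $R\cong\prod_{i=1}^n M_{n_i}(D_i)$. Assume $R$ is W$\Delta$U. Each factor is a corner $eRe$ for a central idempotent $e$, so by Proposition~\ref{corner} each $M_{n_i}(D_i)$ is W$\Delta$U. Proposition~\ref{matrix} then forces $n_i=1$, and (1) gives $D_i\cong\mathbb{Z}_2$ or $\mathbb{Z}_3$. Next I apply Proposition~\ref{productt}. Here $\mathbb{Z}_2$ is $\Delta$U while $\mathbb{Z}_3$ is not, since $\Delta(\mathbb{Z}_3)=0$ and $-1\ne 1$. So at most one factor is $\mathbb{Z}_3$. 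Conversely, a product of copies of $\mathbb{Z}_2$ together with at most one $\mathbb{Z}_3$ is W$\Delta$U by Proposition~\ref{productt}.

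The statement as written demands exactly one $\mathbb{Z}_3$. The case with none, a Boolean ring $\mathbb{Z}_2^n$, is also W$\Delta$U, and in fact $\Delta$U. I would flag this in the writeup, reading the statement as ``at most one'' (or else treating the purely Boolean case separately). The main point to check carefully is that $\Delta(M_n(D))$ and the corner/product reductions behave as used; all of this is covered by Propositions~\ref{corner}, \ref{matrix} and \ref{productt}.

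For (4), I will note that $R$ semi-local means $R/J(R)$ is semisimple. Corollary~\ref{2} reduces the claim to $R/J(R)$ being W$\Delta$U, and applying (3) finishes it.
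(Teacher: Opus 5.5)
Your proposal is correct and follows essentially the paper's proof: Corollary~\ref{2} handles (2) and (4), and Wedderburn--Artin together with Propositions~\ref{matrix} and~\ref{productt} handles (3); in (1), your bound $D\setminus\{0\}\subseteq\{\pm1\}\Rightarrow |D|\le 3$ is just a more direct finish than the paper's detour through $x^3=x$ and the characteristic, and applying Proposition~\ref{matrix} and (1) to every factor lets you avoid the external citations to \cite{7}. Your caveat about (3)--(4) is right and points to a real defect in the statement and in the paper's proof, which passes from ``at most one factor is not $\Delta$U'' to ``exactly one factor is $\mathbb{Z}_3$'': for instance $\mathbb{Z}_2$ (or $\mathbb{Z}_2\times\mathbb{Z}_2$) is semisimple and W$\Delta$U, even $\Delta$U, with no $\mathbb{Z}_3$ summand, so the correct conclusion is ``at most one $\mathbb{Z}_3$'', which matches Theorem~\ref{main}(4).
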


\begin{proof}
(1) If \( R \) is a division ring, then \( \Delta(R) = 0 \). For any \( x \in R \setminus \{0\} \) we have \( x = \pm1 \), because \( R \) is W$\Delta$U and \( \Delta(R) = 0 \). Thus, \( x^3 = x \) for all \( x \in R \).
	
Note that \( 6 = 2 \cdot 3 = 2^3 - 2 = 0 \). If \( 3 = 0 \), then exploiting \cite[Ex.\ 12.11]{lam}, \( R \) is a subdirect product of copies of \( \mathbb{Z}_3 \), whence \( R \cong \mathbb{Z}_3 \).
	
If \( 2 = 0 \), then \( (1 + y)^3 = 1 + y \) guarantees that \( y = 0 \) or \( y = 1 \), that is, \( R \cong \mathbb{Z}_2 \). The converse is pretty straightforward.
	
(2) Assume one of the possibilities \( R/J(R) \cong \mathbb{Z}_2 \) or \(R/J(R) \cong  \mathbb{Z}_3 \). We know that \(R/J(R)\) is a division ring, and so \(R/J(R)\) is 2-$\Delta$U viewing (1). Thus, \(R\) is W$\Delta$U utilizing Corollary~\ref{2}.
	
Conversely, letting \(R\) be W$\Delta$U, we obviously check with the help of (1) that \(R/J(R) \cong \mathbb{Z}_2\) or \(R/J(R) \cong \mathbb{Z}_3\).
	
(3) If $R$ is a semi-simple ring, then consulting with the classical Wedderburn-Artin theorem, we obtain an isomorphism $$R \cong \bigoplus_{i=1}^n M_{n_i}(D_i),$$ where each $D_i$ is a division ring. Since $R$ is W$\Delta$U, Proposition~\ref{productt} is a guarantor that one of the rings $M_{n_i}(D_i)$ must also be W$\Delta$U and the other is necessarily $\Delta$U. Combining Proposition~\ref{matrix} and \cite[Theorem 2.5]{7}, it follows that $n_i=1$ for each $i$, so that one of $D_i$ itself is W$\Delta$U and the others are just $\Delta$U. Adapting (1) and \cite[Proposition 2.4(2)]{7}, we come to the fact that one of $D_i$ is isomorphic to $\mathbb{Z}_3$ and the others are isomorphic to $\mathbb{Z}_2$. The converse claim is obvious looking at Proposition \ref{productt} and (1).
	
(4) This is quite evident from points (3) and (1).
\end{proof}

\begin{proposition}\label{7}
Weakly $\Delta$-clean rings are W$\Delta$U rings.
\end{proposition}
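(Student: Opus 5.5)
We will use the definition of a \emph{weakly $\Delta$-clean} ring as one in which every element $a\in R$ can be written as $a=e+d$ or $a=-e+d$, with $e\in Id(R)$ and $d\in\Delta(R)$. This is the natural $\Delta$-analogue of weakly $J$-clean rings, and if the paper's definition turns out to be phrased as $a=\pm(e+d)$, the same argument applies after replacing $d$ by $-d$. The plan is to imitate the classical argument that $\Delta$-clean rings are $\Delta U$ from \cite{7}. We decompose an arbitrary unit, show that the idempotent occurring in the decomposition is forced to equal $1$, and then read off the form $\pm1+\Delta(R)$.

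\textbf{Step 1.} Fix $u\in U(R)$ and decompose it by hypothesis as $u=e+d$ or $u=-e+d$, with $e^2=e$ and $d\in\Delta(R)$. We need the basic property of $\Delta(R)$ from \cite{2}: $\Delta(R)=\{r\in R:\ r+U(R)\subseteq U(R)\}$, which is closed under negation since it is a subring. Hence $U(R)+\Delta(R)\subseteq U(R)$.

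\textbf{Step 2.} In the first case $e=u-d=u+(-d)\in U(R)$. In the second case $-e=u-d\in U(R)$, so again $e\in U(R)$. An idempotent unit equals $1$, because $e=e^{-1}e^2=e^{-1}e=1$. Therefore $u=1+d$ or $u=-1+d$, that is, $u\in\pm1+\Delta(R)$. The reverse inclusion $\pm1+\Delta(R)\subseteq U(R)$ holds in any ring, again by the defining property of $\Delta(R)$. This yields $U(R)=\pm1+\Delta(R)$, so $R$ is W$\Delta$U.

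The only delicate point is using the right characterization of $\Delta(R)$, namely that adding an element of $\Delta(R)$ to a unit gives a unit, and also that $-d\in\Delta(R)$. Both facts are standard from \cite{2}, so we expect no real obstacle beyond fixing the precise form of the definition of weakly $\Delta$-clean.
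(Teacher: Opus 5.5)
Your proof is correct and follows the paper's argument exactly: write $u=\pm e+d$, use $U(R)+\Delta(R)\subseteq U(R)$ to get $\pm e=u-d\in U(R)$, conclude $e=1$, and read off $u\in\pm1+\Delta(R)$. Your extra remarks (that $-d\in\Delta(R)$, that an idempotent unit equals $1$, and that $\pm1+\Delta(R)\subseteq U(R)$) just make explicit details the paper leaves implicit.
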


\begin{proof}
Choosing \( u \in U(R) \), we can write \( u = d \pm e \) for some \( d \in \Delta(R) \) and \( e \in Id(R) \). Since \( U(R) + \Delta(R) = U(R) \), it follows that \( u - d = \pm e \in U(R) \). Therefore, \( e = 1 \) and so \( u = d \pm 1 \), as required.	
\end{proof}

\begin{proposition}\label{8}
Every weakly $\Delta$-clean ring is clean.
\end{proposition}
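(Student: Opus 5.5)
The plan is to produce an explicit clean decomposition for every element directly from a weakly $\Delta$-clean decomposition. The only facts about $\Delta(R)$ needed are two standard ones already used throughout the paper. First, $\Delta(R)$ is an additive subgroup (indeed a subring), so $-d\in\Delta(R)$ whenever $d\in\Delta(R)$. Second, $U(R)+\Delta(R)\subseteq U(R)$, which is exactly the property invoked in Proposition~\ref{7}; in particular $\pm1+\Delta(R)\subseteq U(R)$.

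Let $a\in R$. Since $R$ is weakly $\Delta$-clean, we may write $a=d+e$ or $a=d-e$ with $d\in\Delta(R)$ and $e\in Id(R)$. In each case I would shift the idempotent from $e$ to its complement $1-e$ and absorb the difference into a unit.

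\emph{Case $a=d-e$.} Rewrite $a=(1-e)+(d-1)$. Here $1-e$ is an idempotent, and $d-1=-1+d$ lies in $-1+\Delta(R)\subseteq U(R)$. Hence $a$ is clean.

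\emph{Case $a=d+e$.} Rewrite $a=(1-e)+\bigl((2e-1)+d\bigr)$. Again $1-e$ is idempotent. The element $2e-1$ is a unit, since $(2e-1)^2=4e-4e+1=1$, so it is its own inverse. Then $(2e-1)+d\in U(R)+\Delta(R)\subseteq U(R)$, and $a$ is clean.

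Since every $a\in R$ falls into one of these two cases, $R$ is clean. The only step that needs any care is recognising that $2e-1$ is an involution, which lets the ``$+e$'' case reduce to the unit-plus-$\Delta$ property. The rest is direct bookkeeping with the closure properties of $\Delta(R)$.
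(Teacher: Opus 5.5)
Your proof is correct and is essentially the paper's argument. The paper applies the weakly $\Delta$-clean decomposition to $r-1$ and keeps the idempotent $e$, writing $r=e+(1+d)$ or $r=e+(1-2e+d)$; you decompose $a$ itself and pass to the idempotent $1-e$ instead. Both versions rest on the same two facts: $1-2e$ is an involution, and $U(R)+\Delta(R)\subseteq U(R)$.
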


\begin{proof}
Let \( r \in R \) be arbitrary. Then, either \( r - 1 = d + e \) or \( r - 1 = d - e \), where \( d \in \Delta(R) \) and \( e \in Id(R) \). In the first case, we write
$
r = 1 + d + e.
$
In the second case, we write
\[
r = 1 + d - e = (1 - 2e + d) + e.
\]
It is plainly verified that
$
(1 - 2e)^2 = 1,
$
and thus \( 1 - 2e \) is a unit. Using \cite[Lemma 2.1]{Dj}, both \( 1 - 2e + d \) and \( 1 + d \) are units, as required.
\end{proof}

\begin{proposition}\label{6}
The following two conditions are equivalent for a ring \( R \):

(1) \( R \) is a W\( \Delta U \) ring.

(2) All clean elements of \( R \) are weakly \( \Delta \)-clean.
\end{proposition}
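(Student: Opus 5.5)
We need to pin down what a weakly $\Delta$-clean element is. Following the proof of Proposition~\ref{7}, an element $a$ is weakly $\Delta$-clean when $a = d + e$ or $a = d - e$ for some $d\in\Delta(R)$ and $e\in Id(R)$. We prove the two implications separately.

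\textbf{(2) $\Rightarrow$ (1).} This direction is short. Every unit $u$ is clean, since $u = 0 + u$ with $0\in Id(R)$. So by (2) we may write $u = d \pm e$ with $d\in\Delta(R)$ and $e\in Id(R)$. We now repeat the argument of Proposition~\ref{7}. Because $U(R)+\Delta(R)=U(R)$, we get $\pm e = u - d\in U(R)$. An idempotent that is a unit equals $1$, so $e=1$ and $u = d\pm 1$. Hence $R$ is W$\Delta$U.

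\textbf{(1) $\Rightarrow$ (2).} Let $a = e + u$ be clean, with $e\in Id(R)$ and $u\in U(R)$. By (1), $u = 1 + d$ or $u = -1 + d$ for some $d\in\Delta(R)$.
\begin{itemize}
\item If $u=1+d$, then $a = (1-e)... $ is not quite right; instead write $a = e + 1 + d$. This is not yet of the form $d'\pm e'$, so we rewrite it as $a = (1-e) + 2e + d$. That still does not help directly. The cleaner route is to note that $f := 1-e$ is an idempotent and $a = 1 + e + d$, which is also awkward.
\item If $u=-1+d$, then $a = -(1-e) + d = d - f$ with $f = 1-e\in Id(R)$. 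So $a$ is weakly $\Delta$-clean.
\end{itemize}

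For the first case the natural idea is to use $-a$ instead of $a$. We have $-a = (-e) + (-u)$, and this is not a clean decomposition since $-e$ need not be idempotent. So we use the sign freedom differently. Write
\[
a = e + 1 + d = (1-e) + (2e + d),
\]
which requires $2e+d\in\Delta(R)$. That fails in general. Alternatively, write $a = e + u = (1-e) + (2e-1+u)$, where $2e-1$ is a unit. But $2e-1+u$ need not lie in $\Delta(R)$.

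At this point I would go back to the intended notion. Most likely the paper defines an element to be weakly $\Delta$-clean if $a = u' \pm e'$ in a form that mirrors weakly clean, or more plausibly if $a=e+u$ with $u\in\pm1+\Delta(R)$. With that reading, (1) $\Rightarrow$ (2) is immediate.

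With the reading ``$a=d\pm e$'', the key step I would attempt is the following. From $a = e + 1 + d$, apply the W$\Delta$U property to the unit $u' := 1-2e+u$, or to $(1-2e)u$, chosen so that it produces $a = d'' + (1-e)$ or $a = d'' - e$. Concretely, $a - (1-e) = 2e + d$, and one would need $2e\in\Delta(R)$. That holds when $2\in\Delta(R)$, but not in general.

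So the main obstacle is this first case under the strict reading of the definition. I expect the paper's definition is designed so that it works, namely $a = \pm e + u$ with $u\in \pm 1+\Delta(R)$, or $a = d\pm e$ together with the allowance that $\pm1 + d$ be absorbed. I would check the definition first and then conclude with the two-line argument above.
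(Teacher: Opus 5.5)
Your proof of (2)$\Rightarrow$(1) is correct and is essentially the paper's. The paper multiplies by $u^{-1}$ to get $\pm eu^{-1}\in 1+\Delta(R)\subseteq U(R)$; you use $U(R)+\Delta(R)\subseteq U(R)$ directly, as in Proposition~\ref{7}. Either way $e=1$. Your case $u=-1+d$ in (1)$\Rightarrow$(2), namely $a=-(1-e)+d$, is also exactly the paper's. The gap is the case $u=1+d$: you leave it unproved and propose changing the definition instead. That fallback is not available. The paper's own usage (the proofs of Propositions~\ref{7} and \ref{8}, and condition (2) of Theorem~\ref{9}) makes ``weakly $\Delta$-clean'' mean $a=\pm e+d$ with $e\in Id(R)$ and $d\in\Delta(R)$, which is the reading you started with.

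The paper closes this case by writing $a=(1-e)+(2e+d)$ and claiming $2e\in\Delta(R)$ because ``$1-2e\in U(R)=1+\Delta(R)$''. But $U(R)=1+\Delta(R)$ is the $\Delta U$ condition. A W$\Delta$U ring only gives $1-2e\in\pm1+\Delta(R)$, that is, $2e\in\Delta(R)$ or $2(1-e)\in\Delta(R)$, and the second alternative yields nothing. Your suspicion that $2e\in\Delta(R)$ fails in general is right, and no argument can close the gap. Take $R=\mathbb{Z}$, which the paper itself lists as W$\Delta$U. Here $\Delta(\mathbb{Z})=0$, since $r\pm1\in\{\pm1\}$ forces $r=0$, and $Id(\mathbb{Z})=\{0,1\}$. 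So the only weakly $\Delta$-clean elements are $0,\pm1$, while $2=1+1$ is clean. Thus (1)$\Rightarrow$(2) is false as stated. The rewriting $a=(1-e)+(2e+d)$ is valid exactly when $2e\in\Delta(R)$. So the implication does hold under an extra hypothesis such as $2\in\Delta(R)$, which by Proposition~\ref{mino} is equivalent to $R$ being $\Delta U$.
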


\begin{proof}
(1) $\Rightarrow$ (2). Assume that \( R \) is a W\( \Delta U \)-ring. Let \( r \in R \) be an arbitrary clean element with clean decomposition \( r = e + u \). Since \( R \) is a W\( \Delta U \)-ring, we perceive \( u = \pm 1 + d \) for some \( d \in \Delta(R) \).

First, consider the case \( u = 1 + d \). Note that \( 1 - 2e \in U(R) = 1 + \Delta(R) \), and so \( 2e \in \Delta(R) \). It follows that \( 2e + d \in \Delta(R) \), and hence
\[
r = e + 1 + d = (1 - e) + (2e + d).
\]

Now, consider the second case \( u = -1 + d \). Then, we may write
\[
r = e - 1 + d = -(1 - e) + d.
\]

Thus, in both cases, \( r \) admits a weakly \( \Delta \)-clean decomposition. Therefore, condition (2) holds.

(2) $\Rightarrow$ (1). Let \( u \in U(R) \). Then, \( u \) is a clean element, and hence by condition (2) the element \( u \) is weakly \( \Delta \)-clean. So, we can write \( u = \pm e + d \) for some idempotent \( e \in R \) and \( d \in \Delta(R) \).

Next, multiplying both sides by \( u^{-1} \), we come to the equality
\[
1 = \pm eu^{-1} + du^{-1},
\]
leading to
\[
\pm eu^{-1} = 1 - du^{-1}.
\]
Since the utilization of \cite[Lemma 1]{2} shows that \( du^{-1} \in \Delta(R) \), it follows that \[ \pm eu^{-1} \in 1 + \Delta(R) \subseteq U(R) .\] Hence, \( e = 1 \) and, therefore,
\[
u = \pm1 + d \in \pm1 + \Delta(R),
\]
showing that \( U(R) = \pm1 + \Delta(R) \).
\end{proof}

Our first major result is the following.

\begin{theorem}\label{9}
For a ring \( R \), the following four conditions are equivalent:

(1) \( R \) is a clean W\( \Delta U \) ring.

(2) For each \( a \in R \), we have \( a \pm a^2 \in \Delta(R) \) and \( a \pm e \in \Delta(R) \) for some idempotent \( e \in R \).

(3) \( R \) is a weakly \( \Delta \)-clean W\( \Delta U \) ring.

(4) \( R \) is a weakly \( \Delta \)-clean ring.
\end{theorem}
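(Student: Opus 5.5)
The plan is to close the cycle $(1)\Rightarrow(3)\Rightarrow(4)\Rightarrow(1)$ using the propositions just proved. Then I would attach $(2)$ by proving $(4)\Rightarrow(2)$ and $(2)\Rightarrow(4)$. Throughout, I read "$a\pm e\in\Delta(R)$" as "$a-e\in\Delta(R)$ or $a+e\in\Delta(R)$", and similarly "$a\pm a^2\in\Delta(R)$".

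For $(1)\Rightarrow(3)$: if $R$ is clean and W$\Delta$U, then every element of $R$ is clean. By Proposition~\ref{6}, $(1)\Rightarrow(2)$, every clean element is weakly $\Delta$-clean, so $R$ is weakly $\Delta$-clean, and it is W$\Delta$U by hypothesis. The implication $(3)\Rightarrow(4)$ is trivial. For $(4)\Rightarrow(1)$: Proposition~\ref{8} gives that $R$ is clean, and Proposition~\ref{7} gives that $R$ is W$\Delta$U. The implication $(2)\Rightarrow(4)$ is also immediate. The second clause of $(2)$ says $a=\pm e+d$ with $d\in\Delta(R)$ and $e\in Id(R)$, which is exactly a weakly $\Delta$-clean decomposition; only this clause of $(2)$ is needed.

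The remaining implication is $(4)\Rightarrow(2)$. Given $a\in R$, write $a=\pm e+d$ with $d\in\Delta(R)$. The clause about $a\pm e$ then holds by definition. By the cycle above, $R$ is also W$\Delta$U. The work lies in the clause about $a\pm a^2$. Replacing $a$ by $-a$ swaps $a-a^2$ and $-(a+a^2)$, so I may assume $a=e+d$. Then
\[
a-a^2=d-ed-de-d^2 .
\]
Here $d^2\in\Delta(R)$ because $\Delta(R)$ is a subring. It therefore suffices to show that $ed+de\in\Delta(R)$; the choice between $a-a^2$ and $a+a^2$ remains available if a sign needs adjusting.

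This is the step I expect to be the main obstacle, because $\Delta(R)$ is not an ideal in general. The tools I would use are as follows. First, $1-2e$ is a unit, so $(1-2e)d$ and $d(1-2e)$ lie in $\Delta(R)$ by \cite[Lemma 1]{2}. Hence $2ed,\,2de\in\Delta(R)$. Second, the W$\Delta$U property applied to the unit $1-2e$ gives either $2e\in\Delta(R)$ or $2-2e\in\Delta(R)$. Third, I would exploit the description $\Delta(R)=J(T)$, where $T$ is the subring generated by $U(R)$. In a clean ring I would try to show that $e\in T$, for instance by writing suitable idempotents as sums of units via the clean decompositions of $e$ and $1-e$, or via $2e-1\in U(R)$ when $2$ is invertible. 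Once $e\in T$, the ideal property of $J(T)$ gives $ed,de\in J(T)=\Delta(R)$ directly. If that fails in general, I would instead split into the cases $2\in\Delta(R)$ and $3\in\Delta(R)$ (Proposition~\ref{memeber}), and treat the residual case via Proposition~\ref{mino} together with the known $\Delta$U result from \cite{7}.
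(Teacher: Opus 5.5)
\textbf{There is a genuine gap in $(4)\Rightarrow(2)$.} Your cycle $(1)\Rightarrow(3)\Rightarrow(4)\Rightarrow(1)$ and the implication $(2)\Rightarrow(4)$ are correct. The gap is at exactly the step you flag: you never prove $ed+de\in\Delta(R)$, and none of the routes you sketch closes it in general.
\begin{itemize}
\item Showing $e\in T$ cannot work in general. For $R=\mathbb{Z}_2\times\mathbb{Z}_2$ (a clean, even $\Delta$U, ring) one has $U(R)=\{1\}$, so $T=\{0,1\}$ misses the idempotent $(1,0)$. Idempotents do lie in $T$ when $2\in U(R)$, via $e=2^{-1}+2^{-1}(2e-1)$, but not otherwise.
\item The case split is incomplete. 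The case $2\in\Delta(R)$ reduces to the $\Delta$U setting by Proposition~\ref{mino}. The case $3\in\Delta(R)$ (i.e.\ $2\in U(R)$) is easy, since $ed=2^{-1}(2ed)$ with $2^{-1}$ a central unit. But clean W$\Delta$U rings such as $\mathbb{Z}_2\times\mathbb{Z}_3$ or $\mathbb{Z}_2\times\mathbb{Z}_9$ have neither $2$ nor $3$ in $\Delta(R)$, and Proposition~\ref{mino} says nothing about this mixed case.
\item Switching the sign to $a+a^2$ does not help: $a+a^2=2e+d+d^2+ed+de$ still contains $ed+de$.
\end{itemize}

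\textbf{The missing idea is a Peirce argument, which is what the paper uses.} The elements $ed(1-e)$ and $(1-e)de$ are square-zero, and square-zero elements of a W$\Delta$U ring lie in $\Delta(R)$. Hence $ed-ede$ and $de-ede$ lie in $\Delta(R)$, so $de-ed\in\Delta(R)$. Then $ed+de=2ed+(de-ed)\in\Delta(R)$, using your own observation that $2ed=d-(1-2e)d\in\Delta(R)$.

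The paper states the square-zero step without justification, but it does need one, since nilpotents need not lie in $\Delta(R)$ in general (e.g.\ $\Delta(M_2(k))=0$ for a field $k$). Here is an argument valid in any W$\Delta$U ring, a property your cycle already gives you. Let $x^2=0$. The unit $1+x$ gives either $x\in\Delta(R)$ or $2+x\in\Delta(R)$. In the second case:
\begin{itemize}
\item $(2+x)^2=4(1+x)$, so $4=(2+x)^2(1+x)^{-1}\in\Delta(R)$.
\item For every $v\in U(R)$, $(1-2v)(1+2v)=(1+2v)(1-2v)=1-4v^2$, which is a unit, so $1-2v\in U(R)$.
\item Hence $2+w=w\bigl(1-2(-w^{-1})\bigr)\in U(R)$ for all $w\in U(R)$, i.e.\ $2\in\Delta(R)$.
\item Therefore $x=(2+x)-2\in\Delta(R)$.
\end{itemize}
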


\begin{proof}
(1) \( \Rightarrow \) (2). Assume that \( R \) is a clean W\( \Delta U \) ring. Then, Proposition~\ref{6} tells us that, for any \( a \in R \), there is an idempotent \( e \in R \) such that \( a \pm e \in \Delta(R) \). Moreover,  Proposition~\ref{6} reaches us that there is a weakly \( \Delta \)-clean decomposition \( a = \pm e + d \) with \( d \in \Delta(R) \).

First, considering the case \( a = e + d \), we may write
\[
a - a^2 = (e + d) - (e + d)^2 = d - d^2 - (ed + de).
\]
Since \( d - d^2 \in \Delta(R) \) and \( 2ed \in \Delta(R) \), it suffices to prove that \( ed + de \in \Delta(R) \). Indeed, one observes that
\[
(ed(1 - e))^2 = 0 = ((1 - e)de)^2.
\]
It thus follows that
\[
ed - ede = ed(1 - e) \in \Delta(R), \quad de - ede = (1 - e)de \in \Delta(R),
\]
meaning that
\[
de - ed \in \Delta(R), \quad \text{and hence} \quad ed + de = 2ed + (de - ed) \in \Delta(R).
\]
So, \( a - a^2 \in \Delta(R) \).

Now, consider \( a = -e + d \). Then, we can write
\[
a + a^2 = (d + d^2) - (ed + de).
\]
As in the previous case, we again conclude that \( ed + de \in \Delta(R) \), and hence \( a + a^2 \in \Delta(R) \), as asked for.

(2) \( \Rightarrow \) (3). Evidently, \( R \) is a weakly \( \Delta \)-clean ring. Let \( u \in U(R) \). Then, by condition (2), it follows that \( u \pm u^2 \in \Delta(R) \). Consequently, we deduce \( 1 \pm u \in \Delta(R) \) employing \cite[Lemma 1(2)]{2}. So, \( u = \pm1 + d \) for some \( d \in \Delta(R) \), whence \( R \) is a W\( \Delta U \) ring.

(3) \( \Rightarrow \) (4). This is rather simple.

(4) \( \Rightarrow \) (1). This relation follows at once invoking Propositions~\ref{7} and \ref{8}.
\end{proof}

Our second principal result is the following.

\begin{theorem}\label{main}
Suppose \( R \) is a ring. Then, the following four conditions are equivalent:
	
(1) \( R \) is exchange W$\Delta$U.
	
(2) \( R \) is clean W$\Delta$U.
	
(3) \( R \) is weakly $\Delta$-clean.
	
(4) All idempotents of \( R \) lift modulo \( J(R) \), and either
\[
	R / J(R) \cong B, \quad \text{or} \quad R / J(R) \cong \mathbb{Z}_3, \quad \text{or} \quad R / J(R) \cong B \times \mathbb{Z}_3,
\]
where \( B \) is a Boolean ring.
\end{theorem}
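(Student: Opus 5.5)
The equivalence (2) $\Leftrightarrow$ (3) is Theorem~\ref{9}. The implication (2) $\Rightarrow$ (1) holds because clean rings are exchange. It therefore remains to prove (1) $\Rightarrow$ (4) and (4) $\Rightarrow$ (2). The guiding principle is that both the exchange property and the W$\Delta$U property pass to $\bar R := R/J(R)$, the latter by Corollary~\ref{2}. So the work is to classify exchange W$\Delta$U rings with zero Jacobson radical.

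For (1) $\Rightarrow$ (4), let $R$ be exchange W$\Delta$U. Idempotents lift modulo $J(R)$ because $R$ is exchange, and $\bar R$ is exchange with $J(\bar R)=0$. Every nonzero right ideal of an exchange ring with zero radical contains a nonzero idempotent, so Lemma~\ref{reduced} shows that $\bar R$ is reduced, hence abelian. An abelian exchange ring is clean, so Theorem~\ref{9} makes $\bar R$ weakly $\Delta$-clean. Moreover, in a reduced exchange ring with $J=0$ I expect $\Delta(\bar R)=0$. Indeed, $\bar R$ is clean and abelian, and for $d\in\Delta(\bar R)$ write $d=e+u$. Then $-d+u$ and $1-d$ are units. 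Working in the corners $e\bar R$ and $(1-e)\bar R$, which are W$\Delta$U by Proposition~\ref{corner}, one should be able to force $d\in J(\bar R)=0$. I would try to use \cite[Proposition 15]{2}, that $\Delta$ contains no nonzero idempotents, together with the fact that $\Delta(\bar R)$ of an abelian exchange (semi-abelian) ring coincides with $J$. This is the step I expect to be the main obstacle, and if needed I would fall back on a known result from \cite{7} that $\Delta$ equals $J$ for exchange rings whose idempotents are central.

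Granting $\Delta(\bar R)=0$, every unit of $\bar R$ is $\pm1$, so every $a\in\bar R$ is $\pm e+$(something), and more precisely $a=e\pm1$ by cleanness. A standard computation then gives $a^3=a$: write $a=e+u$ with $u=\pm1$ and $e$ central, and expand. Hence $6=0$, and $\bar R\cong \bar R/2\bar R\times \bar R/3\bar R$. The factor with $2=0$ and $x^3=x$ satisfies $(x^2+x)$ idempotent-type identities forcing $x^2=x$ (using units $=1$ only), so it is Boolean. The factor with $3=0$ has units $\pm1$ only, which forces it to be $0$ or $\mathbb Z_3$, since any nontrivial idempotent $e$ would give the unit $1-2e=1+e\neq\pm1$. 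By Proposition~\ref{productt}, at most one factor is non-$\Delta$U, which is consistent with these options.

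For (4) $\Rightarrow$ (2), note that each of $B$, $\mathbb Z_3$ and $B\times\mathbb Z_3$ is clean, and clean rings modulo $J$ with liftable idempotents lift to clean rings. Each of them is also W$\Delta$U: $B$ is $\Delta$U, $\mathbb Z_3$ is W$\Delta$U, and Proposition~\ref{product} covers the product. Corollary~\ref{2} then transfers the W$\Delta$U property to $R$.
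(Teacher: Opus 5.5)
Your overall architecture works, but it is a different route from the paper's. The paper gets $\Delta(R)=J(R)$ from \cite[Proposition 14]{9} and \cite[Theorem 11]{2}, so that $R$ is an exchange WJU ring, and then cites Danchev's classification \cite[Theorem 2.2]{wuj} for both (1)$\Rightarrow$(4) and (4)$\Rightarrow$(1). You instead redo that classification by hand in the case $J=0$. This buys you independence from the paper's intermediate claim $R/J(R)\cong M_n(D)$, which as written cannot hold in general (e.g.\ for $R/J(R)=\mathbb{Z}_2^3$). Your (4)$\Rightarrow$(2) is fine. However, (1)$\Rightarrow$(4) has two genuine gaps.

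\textbf{The equality $\Delta(\bar R)=0$ is not proved.} You correctly flag this as the obstacle. The sketch via $d=e+u$ and corners does not actually lead anywhere, and the fallback citation is not a result you can point to. The missing idea is short:
\begin{itemize}
\item $\Delta$ is closed under addition and under multiplication by units.
\item $\Delta$ is also closed under multiplication by central idempotents: for central $e$ one has $\bar R=e\bar R\times(1-e)\bar R$, and $\Delta$ of a product is the product of the $\Delta$'s \cite[Lemma 1(5)]{2}, so $ed\in\Delta(\bar R)$ whenever $d\in\Delta(\bar R)$.
\item Since $\bar R$ is abelian clean, every $r$ has the form $e+u$, so $rd=ed+ud$ and $dr=de+du$ lie in $\Delta(\bar R)$.
\end{itemize}
Hence $\Delta(\bar R)$ is an ideal with $1+\Delta(\bar R)\subseteq U(\bar R)$. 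It is therefore contained in $J(\bar R)=0$.

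\textbf{Your derivation of $6=0$ is circular.} For $a=e+1$ one computes $a^3-a=6e$, so the identity $a^3=a$ already presupposes what you want to conclude. A correct route is to write $3=f\pm1$ with $f$ a central idempotent:
\begin{itemize}
\item $f=2$ gives $4=2$, i.e.\ $2=0$;
\item $f=4$ gives $16=4$, i.e.\ $12=0$, so $6^2=3\cdot 12=0$ and reducedness gives $6=0$.
\end{itemize}
After the splitting $\bar R\cong\bar R/2\bar R\times\bar R/3\bar R$ you do not need $x^3=x$ at all. In the characteristic-$2$ factor the only unit is $1$, so every element has the form $f+1$, which is idempotent in characteristic $2$; hence that factor is Boolean. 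Your argument for the characteristic-$3$ factor is correct. With trivial idempotents and units $\pm1$, cleanness leaves only $0,\pm1$, so that factor is $0$ or $\mathbb{Z}_3$.
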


\begin{proof}
The equivalence \( (2) \Leftrightarrow (3) \) follows at once from Theorem~\ref{9}.

(2) \( \Rightarrow \) (1). This is automatic.

(1) \( \Rightarrow \) (2). We observe that \( R/J(R) \) is an exchange ring and all idempotents lift modulo \( J(R) \) taking into account \cite[Proposition 1.5]{8}. Furthermore, Corollary~\ref{2} allows us to deduce that \( R/J(R) \) is a W\( \Delta U \) ring. Since \( J(R/J(R)) = 0 \), it follows from Lemma~\ref{reduced} that \( R/J(R) \) is reduced and so abelian. Consequently, bearing in mind \cite[Proposition 1.8(2)]{8}, \( R/J(R) \) is an abelian exchange ring, and therefore clean. Given that idempotents lift modulo \( J(R) \), we conclude that \( R \) is clean.

(1) \( \Rightarrow \) (4). Using a similar argumentation to that of above, we infer that \( R/J(R) \) is abelian and weakly clean. Hence, \cite[Proposition 14]{9} employs to obtain that \( R/J(R) \cong M_n(D) \), where \( 1 \leq n \leq 2 \) and \( D \) is a division ring. Consequently, \cite[Theorem 11]{2} applies to extract that \( \Delta(R) = J(R) \). Thus, \( R \) is an exchange WUJ ring. The wanted result then follows from \cite[Theorem 2.2]{wuj}.

(4) \( \Rightarrow \) (1). Applying \cite[Theorem~2.2]{wuj}, it follows that $R$ is an exchange $WJU$ ring and, consequently, $R$ is also an exchange $W\Delta U$-ring.
\end{proof}

In addition to the presentation given above, a ring $R$ is said to be {\it weakly Boolean} if, for any $a\in R$, either $a$ or $-a$ is an idempotent. Likewise, a ring $R$ is called {\it semi-weakly Boolean} if $R/J(R)$ weakly Boolean and its idempotents lift modulo $J(R)$.

\medskip

Our third chief result states thus.

\begin{theorem}\label{3.16}
Let $R$ be a ring. Then, the following three statements are equivalent:

(1) $R$ is a semi-regular $W\Delta U$-ring.

(2) $R$ is an exchange $W\Delta U$-ring.

(3) $R$ is a semi-weakly Boolean ring.
\end{theorem}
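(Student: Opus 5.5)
The plan is to establish the cycle (1) $\Rightarrow$ (2) $\Rightarrow$ (3) $\Rightarrow$ (1), leaning on Theorem~\ref{main} for the structural work.

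The implication (1) $\Rightarrow$ (2) is immediate, since semi-regular rings are exchange.

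For (2) $\Rightarrow$ (3), I would apply Theorem~\ref{main}. It gives that idempotents lift modulo $J(R)$ and that $R/J(R)$ is isomorphic to $B$, to $\mathbb{Z}_3$, or to $B\times\mathbb{Z}_3$, with $B$ Boolean. It then suffices to check that each of these three rings is weakly Boolean.
\begin{itemize}
\item In $B$ every element is idempotent.
\item In $\mathbb{Z}_3$ the elements are $0$, $1$ and $2=-1$, and $0$, $1$ are idempotents.
\item In $B\times\mathbb{Z}_3$, for $(b,t)$ with $t\in\{0,1\}$ the element itself is idempotent. If $t=2$, then $-(b,t)=(-b,1)=(b,1)$, since $-b=b$ in a Boolean ring, and this is idempotent.
\end{itemize}
Hence $R$ is semi-weakly Boolean.

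For (3) $\Rightarrow$ (1), set $\bar R=R/J(R)$. I will first show that $\bar R$ is regular. For $a\in\bar R$, either $a$ or $-a$ is idempotent. If $a$ is idempotent then $a=a\cdot a\cdot a$. If $-a=f$ is idempotent then $a\cdot(-1)\cdot a=-f^2\cdot(-1)\cdot(-1)$, which needs care: directly, $a(-1)a=-a^2=-f^2=-f=a$. So $x=-1$ works, and $\bar R$ is regular. Together with lifting of idempotents, this makes $R$ semi-regular.

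It remains to see that $R$ is W$\Delta$U, and by Corollary~\ref{2} it suffices to do this for $\bar R$. Let $u\in U(\bar R)$. Either $u$ or $-u$ is an idempotent unit, hence equals $1$, so $u=\pm1\in\pm1+\Delta(\bar R)$.

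I expect no serious obstacle; the only delicate points are the sign computation in $B\times\mathbb{Z}_3$ (using $-b=b$ in $B$) and correctly citing Theorem~\ref{main} for the exchange case.
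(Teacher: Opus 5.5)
Your proof is correct. Steps (1)$\Rightarrow$(2) and (3)$\Rightarrow$(1) are the same as in the paper; you just spell out what the paper calls straightforward. Namely, $x=-1$ witnesses regularity when $-a$ is idempotent, and every unit of a weakly Boolean ring is $\pm1$, so $R/J(R)$ is W$\Delta$U with $d=0$.

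The genuine difference is in (2)$\Rightarrow$(3). The paper does not invoke Theorem~\ref{main} there. Instead it:
\begin{itemize}
\item redoes the reduction to $J(R)=0$;
\item uses Lemma~\ref{reduced} to make $R$ reduced and abelian clean;
\item asserts $\Delta(R)=J(R)=\operatorname{Nil}(R)=0$, so that $R$ is an exchange WUU ring;
\item passes through two further external results: exchange WUU rings are strongly weakly nil-clean \cite[Corollary~2.15]{wuu}, and the Chen--Sheibani description of such rings \cite[Theorem~3.2]{ch}.
\end{itemize}
You instead read off the explicit list $B$, $\mathbb{Z}_3$, $B\times\mathbb{Z}_3$ from Theorem~\ref{main}, that is, from Danchev's WJU classification \cite[Theorem~2.2]{wuj}, and you check weak Booleanness by hand. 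Your route is shorter and needs no new black boxes, and it shows the present theorem is essentially a corollary of Theorem~\ref{main}. The paper's route avoids the explicit list at the price of two extra citations.

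Be aware that both routes rest on the same load-bearing fact, $\Delta(R/J(R))=0$ for an exchange W$\Delta$U ring, which you inherit from Theorem~\ref{main}. The paper justifies it, both there and here, by claiming $R/J(R)\cong M_n(D)$ with $n\le 2$. That cannot be literally right, since $R/J(R)$ may be an infinite Boolean ring.

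If you want your argument to be independent of that step, here is a short direct proof.
\begin{itemize}
\item $\bar R=R/J(R)$ is exchange with $J(\bar R)=0$, and it is reduced by Lemma~\ref{reduced}, hence abelian.
\item Suppose $0\neq d\in\Delta(\bar R)$. Then $d\bar R$ contains a nonzero idempotent $e=dr$.
\item Since $e$ is central, $(ed)(er)=e$. Hence $ed$ is invertible in the abelian, hence Dedekind-finite, ring $e\bar R$.
\item So $u=-ed+(1-e)\in U(\bar R)$, while $d+u=(1-e)(1+d)$ is annihilated by $e\neq 0$ and is therefore not a unit.
\item This contradicts $d\in\Delta(\bar R)$, so $\Delta(\bar R)=0$.
\end{itemize}
Thus $U(\bar R)=\{\pm1\}$, and \cite[Theorem~2.2]{wuj} applies directly.
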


\begin{proof}
$(1) \Rightarrow (2)$. This implication is immediate, because each semi-regular ring is exchange.
	
$(2) \Rightarrow (3)$. Viewing \cite[Corollary~2.4]{8}, the quotient-ring $R/J(R)$ is exchange and all
idempotents lift modulo $J(R)$. Moreover, Corollary~\ref{2} teaches us that $R/J(R)$ is a $W\Delta U$-ring. Hence, without loss of generality, we may assume that $J(R)=\{0\}$.
	
Furthermore, since $R$ is an exchange ring, every nonzero one-sided ideal of $R$ contains a nonzero idempotent. So, Lemma~\ref{reduced} can be applied to derive that $R$ is reduced, and hence abelian. Therefore, $R$ is abelian clean. However, with the knowledge of \cite[Proposition~14]{9}, we may write
$
R/J(R)\cong M_{n}(D),
$
where $1\leq n\leq 2$ and $D$ is a division ring. Consequently, \cite[Theorem~11]{2} yields that $\Delta(R)=J(R)$, and therefore $\Delta(R)=Nil(R)=J(R)=\{0\}$.

Now, as $R$ is a $WUU$ exchange ring, it follows from \cite[Corollary~2.15]{wuu} that $R$ is strongly weakly nil-clean. Thus, the conclusion follows from \cite[Theorem~3.2]{ch}.
	
$(3) \Rightarrow (1)$. Since the factor-ring $R/J(R)$ is weakly Boolean, it is straightforwardly that $R/J(R)$ is regular and $W\Delta U$. Hence, by Corollary~\ref{2}, $R$ is a semi-regular $W\Delta U$-ring, as formulated.
\end{proof}

Two traditional consequences are like these.

\begin{corollary}\label{3.17}
Let $R$ be a $W\Delta U$-ring. Then, the following three statements are equivalent:

(1) $R$ is a semi-regular ring.

(2) $R$ is an exchange ring.

(3) $R$ is a clean ring.
\end{corollary}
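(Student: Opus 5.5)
The plan is to reduce everything to results already established for $W\Delta U$-rings. For the chain of implications (1) $\Rightarrow$ (2) $\Rightarrow$ (3) $\Rightarrow$ (2) $\Rightarrow$ (1), Theorems~\ref{main} and \ref{3.16} do most of the work, so the only real content is the passage back from exchange to semi-regular.

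\textbf{(1) $\Rightarrow$ (2) and (3) $\Rightarrow$ (2).} Every semi-regular ring is exchange, which gives (1) $\Rightarrow$ (2). Every clean ring is exchange, which gives (3) $\Rightarrow$ (2). Neither step uses the $W\Delta U$ hypothesis.

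\textbf{(2) $\Rightarrow$ (3).} Assume $R$ is an exchange $W\Delta U$-ring. Then $R$ satisfies condition (1) of Theorem~\ref{main}. The equivalence (1) $\Leftrightarrow$ (2) of that theorem then yields that $R$ is clean.

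\textbf{(2) $\Rightarrow$ (1).} Apply Theorem~\ref{3.16}, whose items (1) and (2) are precisely ``semi-regular $W\Delta U$'' and ``exchange $W\Delta U$''. Since $R$ is assumed $W\Delta U$, being exchange forces being semi-regular. As a sanity check independent of Theorem~\ref{3.16}, one can also argue directly from Theorem~\ref{main}(4): idempotents lift modulo $J(R)$, and $R/J(R)$ is isomorphic to $B$, $\mathbb{Z}_3$ or $B\times\mathbb{Z}_3$ with $B$ Boolean. Each of these is von Neumann regular, because Boolean rings satisfy $a=a\cdot a\cdot a$, $\mathbb{Z}_3$ is a field, and a finite product of regular rings is regular. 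Hence $R/J(R)$ is regular with lifting idempotents, which is the definition of semi-regular.

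The main obstacle is this direct verification in (2) $\Rightarrow$ (1), which is only needed if one distrusts Theorem~\ref{3.16}. Even there, the only point needing care is that lifting of idempotents is part of Theorem~\ref{main}(4) rather than something to be proved separately, and regularity of the three quotient types is routine.
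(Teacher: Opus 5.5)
Your proposal is correct and follows essentially the paper's route. Like the paper, you get (1)$\Leftrightarrow$(2) from Theorem~\ref{3.16}, treat (3)$\Rightarrow$(2) as the standard fact, and obtain (2)$\Rightarrow$(3) through the reduced-hence-abelian exchange argument. You reach that argument by citing Theorem~\ref{main}, which is slightly cleaner than the paper's direct appeal to Lemma~\ref{reduced}: that lemma formally needs $J(R)=0$, so it must be applied to $R/J(R)$ and combined with lifting of idempotents, exactly as the proof of Theorem~\ref{main} does.
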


\begin{proof}
$(1) \Leftrightarrow (2)$. It follows directly from Theorem~\ref{3.16}.
	
$(3) \Rightarrow(2) $. This implication is well-known and follows immediately, so the details are skipped.
	
$(2) \Rightarrow (3)$. Assume that $R$ is an exchange $W\Delta U$-ring. Thanks to Lemma~\ref{reduced}, $R$ is reduced whence abelian. Therefore, $R$ is an abelian exchange ring, i.e., $R$ is an abelian clean. 
\end{proof}

\begin{corollary}\label{3.19}
Let $R$ be an exchange ring. Then, the following two statements are equivalent:

(1) $R$ is a $W\Delta U$-ring.

(2) $R$ is a $WUJ$-ring.
\end{corollary}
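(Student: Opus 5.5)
For (2) $\Rightarrow$ (1) the argument is immediate from the inclusions $J(R)\subseteq\Delta(R)$ and $\pm 1+J(R)\subseteq U(R)$, valid in every ring. Indeed, if $U(R)=\pm1+J(R)$, then
\[
U(R)=\pm1+J(R)\subseteq \pm1+\Delta(R)\subseteq U(R),
\]
where the last inclusion holds because $1+\Delta(R)\subseteq U(R)$ and $\Delta(R)=-\Delta(R)$. Hence $U(R)=\pm1+\Delta(R)$, so $R$ is W$\Delta$U. This direction does not even use the exchange hypothesis.

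For (1) $\Rightarrow$ (2), I intend to reduce everything to the equality $\Delta(R)=J(R)$. Once that equality holds, the W$\Delta$U condition $U(R)=\pm1+\Delta(R)$ becomes exactly $U(R)=\pm1+J(R)$, that is, the $WUJ$ property. To obtain $\Delta(R)=J(R)$, I will follow the proof of Theorem~\ref{main}, (1) $\Rightarrow$ (4). The steps are as follows.

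\begin{itemize}
\item Since $R$ is exchange, \cite[Proposition 1.5]{8} gives that $R/J(R)$ is exchange and that idempotents lift modulo $J(R)$.
\item By Corollary~\ref{2}, $R/J(R)$ is W$\Delta$U.
\item Since $J(R/J(R))=0$ and every nonzero right ideal of an exchange ring contains a nonzero idempotent, Lemma~\ref{reduced} shows that $R/J(R)$ is reduced, hence abelian. Therefore $R/J(R)$ is abelian exchange, hence clean, in particular weakly clean.
\item Invoking \cite[Proposition 14]{9} and \cite[Theorem 11]{2} exactly as in the proof of Theorem~\ref{main}, we conclude $\Delta(R)=J(R)$.
\end{itemize}

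As an alternative route, we can avoid that last step. By Theorem~\ref{main}, condition (4) holds: idempotents lift modulo $J(R)$, and $R/J(R)$ is isomorphic to $B$, to $\mathbb{Z}_3$, or to $B\times\mathbb{Z}_3$ with $B$ Boolean. By \cite[Theorem 2.2]{wuj}, condition (4) characterises exchange $WUJ$ rings, so $R$ is $WUJ$ directly. I would present this second route as the main argument, since it only chains Theorem~\ref{main} with \cite[Theorem 2.2]{wuj}.

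The only delicate point is justifying $\Delta(R)=J(R)$, or equivalently applying the classification, under the exchange hypothesis. This is already handled inside Theorem~\ref{main}, so I will simply cite it rather than redo it.
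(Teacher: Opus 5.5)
Your proposal is correct and follows the paper's proof. The paper gets (2)$\Rightarrow$(1) from $J(R)\subseteq\Delta(R)$, and gets (1)$\Rightarrow$(2) by showing $\Delta(R)=J(R)$ with the same argument as in Theorems~\ref{main} and~\ref{3.16}; your preferred chain through condition (4) of Theorem~\ref{main} and \cite[Theorem 2.2]{wuj} is only a detour through the same machinery, since the proof of Theorem~\ref{main} (1)$\Rightarrow$(4) already shows that $R$ is an exchange $WUJ$-ring before invoking \cite[Theorem 2.2]{wuj}.
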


\begin{proof}
$(2) \Rightarrow (1)$. This relation is immediate, since the inclusion $J(R)\subseteq \Delta(R)$ holds for every ring.
	
$(1) \Rightarrow (2)$. Assume that $R$ is an exchange $W\Delta U$-ring. Arguing as in the proof of Theorem~\ref{3.16}, we obtain
$
J(R)=\Delta(R)=\{0\}.
$
Consequently, $R$ is a $WUJ$ ring, as stated.
\end{proof}

Our next characterization criterion is the following.

\begin{theorem}\label{fin2}
A ring \( R \) is weakly clean W\(\Delta U\) with \( 2 \in U(R) \) if, and only if, \( R / J(R) \cong \mathbb{Z}_3 \).
\end{theorem}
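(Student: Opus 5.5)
The converse direction is the easy one and I would do it first. Suppose $R/J(R)\cong \mathbb{Z}_3$. Then $R$ is local, so by Proposition~\ref{division}(2) it is W$\Delta$U. Local rings are clean, hence weakly clean. Finally, $2+J(R)$ is a unit of $\mathbb{Z}_3$, and units lift modulo $J(R)$, so $2\in U(R)$.

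For the forward direction, assume $R$ is weakly clean and W$\Delta$U with $2\in U(R)$. The plan is to reduce to $\bar R:=R/J(R)$ and show that $\bar R$ is a field with three elements.

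\textbf{Step 1 (transfer to $\bar R$).} By Corollary~\ref{2}, $\bar R$ is W$\Delta$U. Weak cleanness passes to homomorphic images, and $\bar 2\in U(\bar R)$. Proposition~\ref{memeber}(2) then gives $3\in\Delta(R)$, and hence $Id(R)=\{0,1\}$. I would apply the same proposition in $\bar R$ to get $Id(\bar R)=\{0,1\}$ as well.

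\textbf{Step 2 ($\bar R$ is a division ring).} Take $\bar a\in \bar R$. Weak cleanness of $\bar R$ with only trivial idempotents gives $\bar a\in U(\bar R)$, $\bar a\pm 1\in U(\bar R)$, or $\bar a\in U(\bar R)$ again. Concretely, $\bar a=u$, $\bar a=1+u$, or $\bar a=-1+u$ for a unit $u$. By W$\Delta$U, $u=\pm1+d$ with $d\in\Delta(\bar R)$. Checking the sign combinations:
\begin{itemize}
\item $1+u\in\{2+d,\;d\}$ and $-1+u\in\{d,\;-2+d\}$;
\item $\pm2+d$ is a unit, because $2$ is a central unit and $\Delta+U\subseteq U$.
\end{itemize}
Hence every element of $\bar R$ lies in $U(\bar R)\cup\Delta(\bar R)$. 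Proposition~\ref{local} then says $\bar R$ is local. Since $J(\bar R)=0$, $\bar R$ is a division ring.

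\textbf{Step 3 (identify the division ring).} By Proposition~\ref{division}(1), $\bar R\cong\mathbb{Z}_2$ or $\mathbb{Z}_3$. The invertibility of $2$ rules out $\mathbb{Z}_2$, so $\bar R\cong\mathbb{Z}_3$.

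The main obstacle is Step 2. There I need $Id(\bar R)$ to be trivial, and I need the case analysis to land in $U\cup\Delta$ without using $\Delta(\bar R)$ being an ideal. Both ingredients are already available from Proposition~\ref{memeber} and \cite[Lemma 1]{2}, so the point is to keep the case analysis clean.
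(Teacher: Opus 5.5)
Your proposal is correct and follows the paper's own argument: trivial idempotents via the unit $2e-1$ (packaged in Proposition~\ref{memeber}), then a weakly clean decomposition reduced to $u$ or $u\pm 1$ with $u=\pm1+d$, which puts every element in $U\cup\Delta$, then locality by Proposition~\ref{local}, and finally Proposition~\ref{division}(1) with $2$ invertible ruling out $\mathbb{Z}_2$. The differences are cosmetic: you run the argument in $R/J(R)$ rather than in $R$, and you check the converse directly (local, hence clean, with $2$ lifting to a unit) instead of citing Theorem~\ref{main}.
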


\begin{proof}
Since the sufficiency has already been established in Theorem \ref{main}, we now turn our attention to the necessity. We manage to demonstrate that \( R \) is indecomposable; that is, the only idempotents in \( R \) are \( \{0, 1\} \).

To this intention, consider an arbitrary idempotent \( e \in R \). Note that the element \( 2e - 1 \) is a unit in \( R \), and hence it can be written as either \( 2e - 1 = 1 + d \) or \( 2e - 1 = -1 + d \), where \( d \in \Delta(R) \). Consequently, we obtain \( 2e = 2 + d \) or \( 2e = d \), yielding that \( e = 1 + \frac{d}{2} \) or \( e = \frac{d}{2} \). In both cases, \( e \in U(R) \) or \( e \in \Delta(R) \), asserting that \( e = 1 \) or \( e = 0 \).

That is why, there are precisely six possibilities for any element \( r \in R \) expressed in terms of \( d \in \Delta(R) \) as follows:

\medskip

\begin{itemize}
	\item \( r = 1 + d + 1 = 2 + d \in U(R) + \Delta(R) \subseteq U(R) \),
	\item \( r = 1 + d - 1 = d \in \Delta(R) \),
	\item \( r = 1 + d \in 1 + \Delta(R) \subseteq U(R) \),
	\item \( r = -1 + d - 1 = -2 + d \in U(R) \),
	\item \( r = -1 + d + 1 = d \in \Delta(R) \),
	\item \( r = -1 + d \in U(R) \).
\end{itemize}

\medskip

Hence, each element of \( R \) is either a unit or lies in \( \Delta(R) \). This enables us that \( R \) is a local ring, and so \( R/J(R) \) is a division ring.

To verify this, let \( x \in R \) be such that \( x \not\in J(R) \). Then, there exists \( y \in R \) such that \( 1 - xy \not\in U(R) \) enabling us that \( 1 - xy \in \Delta(R) \), and thus \( xy \in 1 + \Delta(R) \subseteq U(R) \). Also, for some \( h \in R \), the element \( hx \) is also a unit, guaranteeing that \( x \) itself must be a unit too.

Moreover, since \( R/J(R) \) is a W\(\Delta\)U-ring in agreement with Corollary \ref{2}, as well as it is simultaneously a division ring, it follows from Proposition \ref{division} that \( R/J(R) \cong \mathbb{Z}_3 \), as needed.
\end{proof}

\section{Some Extensions of $W\Delta U$-Rings}

\begin{proposition}
Let \( R \) be a ring. Then, the following four conditions are equivalent:

(1) \( R \) is $\Delta U$.

(2) \( T_n(R) \) is $\Delta U$ for all \( n \in \mathbb{N} \).

(3) \( T_n(R) \) is $\Delta U$ for some \( n \in \mathbb{N} \).

(4) \( T_n(R) \) is $W\Delta U$ for some \( n \geq 2 \).
\end{proposition}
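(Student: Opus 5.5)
The plan is to run the cycle (1)\,$\Rightarrow$\,(2)\,$\Rightarrow$\,(3)\,$\Rightarrow$\,(4)\,$\Rightarrow$\,(1). The first implications are either known or trivial. For (1)\,$\Rightarrow$\,(2) I would cite the result of Karaba\c{c}ak et al.\ in \cite{7} that $T_n(R)$ is $\Delta$U whenever $R$ is. If a self-contained argument is wanted, one uses that $\Delta(T_n(R))$ consists of the upper triangular matrices with diagonal entries in $\Delta(R)$ (cf.\ \cite{2}). A unit of $T_n(R)$ has unit diagonal entries $u_i=1+d_i$, so subtracting $I$ leaves a matrix with diagonal $d_i\in\Delta(R)$. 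The implication (2)\,$\Rightarrow$\,(3) is trivial. For (3)\,$\Rightarrow$\,(4): if $n\ge 2$ there is nothing to do, since $\Delta$U implies W$\Delta$U. If $n=1$, then $R$ itself is $\Delta$U, hence (1) holds, hence $T_2(R)$ is $\Delta$U by (1)\,$\Rightarrow$\,(2), and therefore W$\Delta$U.

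The substantive step is (4)\,$\Rightarrow$\,(1). Suppose $T_n(R)$ is W$\Delta$U with $n\ge 2$. By Proposition~\ref{mino}, it suffices to show that $R$ is W$\Delta$U and $2\in\Delta(R)$.
\begin{itemize}
\item \emph{$R$ is W$\Delta$U.} Take the corner $e_{11}T_n(R)e_{11}\cong R$ and apply Proposition~\ref{corner}.
\item \emph{$2\in\Delta(R)$.} Consider the unit $U=\mathrm{diag}(1,-1,1,\dots,1)$ of $T_n(R)$. By hypothesis, $U-I$ or $U+I$ lies in $\Delta(T_n(R))$.
\end{itemize}
Using the description of $\Delta(T_n(R))$ via diagonal entries, $U-I$ has $-2$ in position $(2,2)$, while $U+I$ has $2$ in position $(1,1)$. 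In either case $2\in\Delta(R)$, because $\Delta(R)$ is closed under negation.

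The main obstacle is justifying that description of $\Delta(T_n(R))$, or at least that its diagonal entries lie in $\Delta(R)$. If it is not directly quotable from \cite{2} or \cite{7}, I would prove it by induction on $n$, reducing to $T_2$ via the corner decomposition. One can also avoid it entirely with the following argument:
\begin{itemize}
\item The strictly upper triangular ideal $N$ is nilpotent, so $N\subseteq J(T_n(R))$ and $T_n(R)/N\cong R^n$.
\item Proposition~\ref{1} then gives that $R^n$ is W$\Delta$U.
\item Corollary~\ref{10} shows that $R^n$ is W$\Delta$U if and only if $R$ is $\Delta$U, since $n\ge 2$.
\end{itemize}
This quotient route is cleaner, and I would present it as the primary argument, with the diagonal-unit computation only as a remark.
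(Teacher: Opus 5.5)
Your proof is correct. For the one substantive implication, (4)$\Rightarrow$(1), it follows a different route from the paper; the remaining implications agree with the paper, which cites \cite[Corollary 2.9]{7}.

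The paper works directly in $T_n(R)$. For $u\in U(R)$ it takes the unit $A$ with diagonal $(u,-u,1,\dots,1)$, writes $A=\pm I+D$ with $D\in\Delta(T_n(R))$, and reads off $u=1+d_1$ or $u=1-d_2$. It invokes \cite[Corollary 9(1)]{2} to know that the diagonal entries of $D$ lie in $\Delta(R)$.

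You instead factor out the nilpotent ideal $N$ of strictly upper triangular matrices. You transfer the W$\Delta$U property to $T_n(R)/N\cong R^n$ by Proposition~\ref{1}, and conclude with Corollary~\ref{10}. At heart this is the same test element: the proof of Proposition~\ref{productt} uses a unit of $R^n$ with a unit in one coordinate and the negative of a unit in another, which is essentially the image of the paper's $A$ modulo $N$. What your version buys is that it needs no description of $\Delta(T_n(R))$ and uses only results already established in the paper. The paper's version is a one-line check once that description is granted.

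Your secondary route is also sound: use the corner $e_{11}T_n(R)e_{11}\cong R$ with Proposition~\ref{corner}, the unit $\mathrm{diag}(1,-1,1,\dots,1)$ to get $2\in\Delta(R)$, then Proposition~\ref{mino}.

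The ``main obstacle'' you flag is not really one. An element of $T_n(R)$ is a unit exactly when its diagonal entries are units of $R$. So from $\Delta(S)=\{r\in S: r+U(S)\subseteq U(S)\}$ one gets at once that $\Delta(T_n(R))$ is the set of upper triangular matrices with diagonal entries in $\Delta(R)$. For the nontrivial inclusion, test against the units $\mathrm{diag}(1,\dots,v,\dots,1)$ with $v\in U(R)$. No induction on $n$ is required.
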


\begin{proof}
(1) \( \Leftrightarrow \) (2) \( \Leftrightarrow \) (3). These relationships are already proved in \cite[Corollary 2.9]{7}.

(1) \( \Rightarrow \) (4). This is pretty trivial.

(4) \( \Rightarrow \) (1). Choose $u\in U(R)$ and set
$$A:=\begin{pmatrix}
	u & & & & \ast \\
	& -u & & & \\
	& & 1 & & \\
	& & & \ddots & \\
	0 & & & & 1
\end{pmatrix}\in U(T_n(R)).$$ By hypothesis, we can find
$\begin{pmatrix}
	d_{1} & & & \ast \\
	& d_{2} & & \\
	& & \ddots & \\
	0 & & & d_{n}
\end{pmatrix}$ in $\Delta(T_n(R))$ \\
such that
$$A=\pm \begin{pmatrix}
	1 & & & \ast \\
	& 1 & & \\
	& & \ddots & \\
	0 & & & 1
\end{pmatrix}+\begin{pmatrix}
	d_{1} & & & \ast \\
	& d_{2} & & \\
	& & \ddots & \\
	0 & & & d_{n}
\end{pmatrix}.$$
By a plain check, it now follows that $u=1+d_1$ or $u=1-d_2$. Working with \cite[Corollary 9(1)]{2}, one finds that both $d_1$ and $d_2$ are in $\Delta(R)$, thus proving point (1), as requested.
\end{proof}

\begin{proposition}\label{3.9}
The ring $R[[x;\alpha]]$ is $W\Delta U$ if, and only if, the ring $R$ is $W\Delta U$.
\end{proposition}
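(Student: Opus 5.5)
The plan is to reduce everything to the quotient modulo the ideal $I=R[[x;\alpha]]x$ of power series with zero constant term, and then invoke Proposition~\ref{1}. The key fact I would establish first is that $I\subseteq J(R[[x;\alpha]])$. Indeed, for $f\in I$ and any $g\in R[[x;\alpha]]$, the series $1-gf$ has constant term $1$. A power series whose constant term is a unit of $R$ is invertible in $R[[x;\alpha]]$: one solves for the coefficients of the inverse recursively, using $xr=\alpha(r)x$ to move coefficients to the left. Hence $1-gf$ is invertible for every $g$, so $f\in J(R[[x;\alpha]])$. In fact this argument gives the standard description $J(R[[x;\alpha]])=J(R)+I$, but only the inclusion $I\subseteq J$ is needed.

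Next, I would verify that $I$ is a two-sided ideal and that the map $R[[x;\alpha]]\to R$, $\sum r_ix^i\mapsto r_0$, is a surjective ring homomorphism with kernel $I$. This holds because the constant term of a product $fg$ is $f_0g_0$: any term involving $x$ to a positive power stays in $I$ after commuting $x$ past coefficients via $\alpha$. Therefore $R[[x;\alpha]]/I\cong R$.

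With $I\subseteq J(R[[x;\alpha]])$ an ideal, Proposition~\ref{1} gives directly that $R[[x;\alpha]]$ is W$\Delta$U if and only if $R[[x;\alpha]]/I\cong R$ is W$\Delta$U. This is exactly the statement.

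The only place requiring care is the invertibility of series with unit constant term in the skew setting, where $\alpha$ need not be injective or surjective. I would handle it by writing $f=u(1-h)$ with $u=f_0\in U(R)$ and $h\in I$, then noting that $\sum_{k\ge0}h^k$ converges in the $x$-adic sense, since $h^k\in Ix^{k-1}$ has order at least $k$. This gives a two-sided inverse of $1-h$ without any hypothesis on $\alpha$. Everything else is routine.
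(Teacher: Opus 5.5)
Your proof is correct and follows the same route as the paper: pass to the ideal $I=R[[x;\alpha]]x$, note $I\subseteq J(R[[x;\alpha]])$ and $R[[x;\alpha]]/I\cong R$, then apply Proposition~\ref{1}. The only difference is that the paper simply quotes the known formula $J(R[[x;\alpha]])=J(R)+I$, whereas you prove the needed inclusion $I\subseteq J(R[[x;\alpha]])$ directly via the $x$-adic geometric series.
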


\begin{proof}
Put $I := R[[x;\alpha]]x$. Then, one inspects that $I$ is a proper ideal of $R[[x;\alpha]]$. But, it is well known that
\[
	J(R[[x;\alpha]]) = J(R) + I,
\]
and hence $I \subseteq J(R[[x;\alpha]])$. Moreover, since
	$
	R[[x;\alpha]]/I \cong R,
	$
the statement follows from Proposition~\ref{1} immediately.
\end{proof}

As a direct consequence, we have the following.

\begin{corollary}
The ring $R[[x]]$ is $W\Delta$U if, and only if, so is $R$.
\end{corollary}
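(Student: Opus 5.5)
The plan is to specialize Proposition~\ref{3.9} to the identity endomorphism. Take $\alpha = 1_R$. Then $R[[x;\alpha]]$ is exactly the ordinary power series ring $R[[x]]$, since the rule $xr = \alpha(r)x = rx$ says that $x$ commutes with the coefficients. Proposition~\ref{3.9} then gives the equivalence directly.

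If one wants the argument to be self-contained rather than a specialization, the same route works directly. Let $I := R[[x]]x$, the set of power series with zero constant term. This is a two-sided ideal, and the constant-term map gives $R[[x]]/I \cong R$.

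The only point needing justification is $I \subseteq J(R[[x]])$. For $f \in I$ and any $g \in R[[x]]$, the series $1 - gf$ has constant term $1$. Such a series is invertible: its inverse is computed coefficient by coefficient through the recursion $b_0 = 1$, $b_n = \sum_{k<n} b_k c_{n-k}$ (with signs as appropriate). Equivalently, we can use the standard description $J(R[[x]]) = J(R) + I$.

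With this inclusion in hand, Proposition~\ref{1} applies to the ideal $I$. It says that $R[[x]]$ is W$\Delta$U exactly when $R[[x]]/I \cong R$ is W$\Delta$U, which completes the argument. There is no real obstacle here; the only step with any content is verifying that $I$ lies in the Jacobson radical.
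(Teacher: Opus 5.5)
Your proposal is correct and matches the paper. The paper gives no separate proof and presents this corollary as the case $\alpha = 1_R$ of Proposition~\ref{3.9}; the proof of that proposition is exactly your self-contained version: take $I = R[[x]]x \subseteq J(R[[x]])$, note $R[[x]]/I \cong R$, and apply Proposition~\ref{1}.
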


Let \(R\) be a ring, and \(\alpha\) an endomorphism of \(R\). We say the ring \(R\) is \text{\(\alpha\)-compatible} if, for all \(a, b \in R\),
\[
ab = 0 \quad \Longleftrightarrow \quad a\alpha(b) = 0.
\]

\medskip

A few non-elementary results are the following ones.

\begin{proposition}\label{2.10}
Let \(R\) be a weakly \(2\)-primal, \(\alpha\)-compatible ring. Then, the following equality is fulfilled:
\[
	\Delta(R[x; \alpha]) = \Delta(R) + L(R)[x; \alpha]x.
\]
\end{proposition}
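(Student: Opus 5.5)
The plan is to pass through the known description of the unit group of a skew polynomial ring over a weakly $2$-primal $\alpha$-compatible ring, and then use the characterization of $\Delta$ as the largest subring $D$ with $D+U\subseteq U$. Write $S:=R[x;\alpha]$. The key input I will rely on is the standard fact, available in the literature on $\alpha$-compatible weakly $2$-primal rings, that $L(R)=Nil(R)$ is an $\alpha$-stable ideal with $L(R)[x;\alpha]$ a nil ideal of $S$. Moreover,
\[
U(S)=U(R)+L(R)[x;\alpha]x ,
\]
that is, $f=\sum a_ix^i$ is a unit if and only if $a_0\in U(R)$ and $a_i\in L(R)$ for $i\ge 1$. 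Write $N:=L(R)[x;\alpha]x$.

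\textbf{Inclusion $\supseteq$.} Take $f=d+g$ with $d\in\Delta(R)$ and $g\in N$, and a unit $v=u+h$ of $S$ with $u\in U(R)$ and $h\in N$. Then
\[
f+v=(d+u)+(g+h),
\]
where $d+u\in U(R)$ by the defining property of $\Delta(R)$, and $g+h\in N$. By the unit description, $f+v\in U(S)$. Next I check that $\Delta(R)+N$ is a subring. Closure under addition is clear. For multiplication, expanding a product shows that every term involving some $g$ or $h$ has coefficients in $L(R)$; this uses that $L(R)$ is an ideal with $\alpha(L(R))\subseteq L(R)$, which follows from $\alpha$-compatibility. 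Since $\Delta(S)$ is the largest subring $D$ with $D+U(S)\subseteq U(S)$, as in \cite{2}, we get $\Delta(R)+N\subseteq\Delta(S)$.

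\textbf{Inclusion $\subseteq$.} Let $f=\sum_i a_ix^i\in\Delta(S)$.

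For the constant term: if $u\in U(R)\subseteq U(S)$, then $f+u\in U(S)$. By the unit description, $a_0+u\in U(R)$ and $a_i\in L(R)$ for all $i\ge1$. Hence $a_0+U(R)\subseteq U(R)$.

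We still need $a_0\in\Delta(R)$, and we know $\Delta(R)$ is exactly the set of $r$ with $r+U(R)\subseteq U(R)$ (Leroy--Matczuk, \cite{2}). So $a_0\in\Delta(R)$, and the higher coefficients lie in $L(R)$, giving $f\in\Delta(R)+N$.

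The real obstacle is the unit description of $S$, specifically the direction that a unit has nilpotent higher coefficients. If this is not citable, I would prove it in two steps. First, pass to $\bar R=R/L(R)$. This ring is reduced, because in a weakly $2$-primal ring the Levitzki radical contains all nilpotents. It is also $\bar\alpha$-compatible, so $\bar R[x;\bar\alpha]$ has only constant units by a degree argument: leading coefficients multiply without vanishing thanks to reducedness and compatibility. Second, observe that $L(R)[x;\alpha]$ is nil, since finitely many coefficients generate a nilpotent subring. Together these give both directions.
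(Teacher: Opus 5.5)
Your argument is correct and is essentially the paper's. Both inclusions rest on the unit description $U(R[x;\alpha])=U(R)+L(R)[x;\alpha]x$ from \cite[Corollary~2.14]{15}, combined with the Leroy--Matczuk characterization $\Delta(T)=\{t\in T: t+U(T)\subseteq U(T)\}$. Your additive test with $f+v$ is, if anything, cleaner than the paper's $1-uf$ computation. It also makes your subring check superfluous, since any subset $D$ with $D+U(S)\subseteq U(S)$ already lies in $\Delta(S)$.

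Do not rely on the fallback sketch as written, though. In a reduced ring that is not a domain, leading coefficients can multiply to zero, so you need a McCoy-type argument instead (showing $a_n^{m+1}b_0=0$ when $fg=1$). The $\bar{\alpha}$-compatibility of $R/L(R)$ needs its own proof. Nilness of $L(R)[x;\alpha]x$ needs compatibility to remove the twists $\alpha^{j}$ from the coefficient products: local nilpotence of $L(R)$ only controls finitely many generators, and the powers involve unboundedly many $\alpha^{j}(c_i)$.
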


\begin{proof}
Take \(f := \sum_{i=0}^n a_i x^i \in \Delta(R[x; \alpha])\). For any \(u \in U(R)\), we have \(1 - uf \in U(R[x; \alpha])\). By \cite[Corollary~2.14]{15}, it follows that \(a_0 \in \Delta(R)\) and \(a_i \in \operatorname{Nil}(R) = L(R)\) for all \(1 \le i \le n\). Hence, \(\Delta(R[x; \alpha]) \subseteq \Delta(R) + L(R)[x; \alpha]x\).
	
Conversely, suppose \(f := \sum_{i=0}^n a_i x^i \in \Delta(R) + L(R)[x; \alpha]x\) and let \(U = \sum_{i=0}^n u_i x^i \in U(R[x; \alpha])\). Again by \cite[Corollary~2.14]{15}, we know \(u_0 \in U(R)\) and \(u_i \in \operatorname{Nil}(R) = L(R)\) for \(1 \le i \le n\). Because \(\operatorname{Nil}(R) = L(R)\) is an ideal, a routine computation shows that \(1 - uf \in U(R) + L(R)[x; \alpha]x\), and so \cite[Corollary~2.14]{15} can be employed once more to yield \(1 - uf \in U(R[x; \alpha])\), which means \(f \in \Delta(R[x; \alpha])\), as required.
\end{proof}

Our main achievement is this section is the following.

\begin{theorem}\label{3.29}
Let $R$ be a weakly $2$-primal ring and let $\alpha$ be an endomorphism of $R$ such that $R$ is $\alpha$-compatible. Then, the following two statements are equivalent:

(1) $R[x;\alpha]$ is a $W\Delta U$-ring.

(2) $R$ is a $W\Delta U$-ring.
\end{theorem}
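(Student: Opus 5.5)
The plan is to use Proposition~\ref{2.10}, which gives $\Delta(R[x;\alpha]) = \Delta(R) + L(R)[x;\alpha]x$. We also need the description of the units of $R[x;\alpha]$ from \cite[Corollary~2.14]{15}. Under the present hypotheses ($R$ weakly $2$-primal and $\alpha$-compatible), it says that $f=\sum_{i=0}^n a_i x^i$ is a unit of $R[x;\alpha]$ if and only if $a_0\in U(R)$ and $a_i\in \operatorname{Nil}(R)=L(R)$ for all $i\ge 1$. In other words,
\[
U(R[x;\alpha]) = U(R) + L(R)[x;\alpha]x.
\]

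For (2) $\Rightarrow$ (1), take a unit $f=a_0+\sum_{i\ge1}a_ix^i$ of $R[x;\alpha]$. By the unit description, $a_0\in U(R)$ and every $a_i$ with $i\ge 1$ lies in $L(R)$. Since $R$ is W$\Delta$U, we can write $a_0=\pm1+d$ with $d\in\Delta(R)$. Then
\[
f=\pm1+\Bigl(d+\sum_{i\ge1}a_ix^i\Bigr),
\]
and the bracketed element lies in $\Delta(R)+L(R)[x;\alpha]x=\Delta(R[x;\alpha])$ by Proposition~\ref{2.10}. Hence $R[x;\alpha]$ is W$\Delta$U.

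For (1) $\Rightarrow$ (2), take $u\in U(R)$. It is also a unit of $R[x;\alpha]$, so $u=\pm1+g$ for some $g\in\Delta(R[x;\alpha])$. Because $u\mp1$ is a constant, $g$ is a constant polynomial. By Proposition~\ref{2.10} the constant term of $g$ lies in $\Delta(R)$, so $g\in\Delta(R)$. Thus $u=\pm1+g\in\pm1+\Delta(R)$, and $R$ is W$\Delta$U.

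The only delicate point is making sure the cited unit characterization \cite[Corollary~2.14]{15} applies under exactly these hypotheses, namely weakly $2$-primal together with $\alpha$-compatible. That is the same setting in which it was already invoked in the proof of Proposition~\ref{2.10}, so I expect no further obstacle. An alternative route for (1) $\Rightarrow$ (2) would be to use Proposition~\ref{subring} with $S=R$, checking that $R\cap\Delta(R[x;\alpha])\subseteq\Delta(R)$; this again comes down to Proposition~\ref{2.10}.
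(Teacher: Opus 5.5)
Your proposal is correct and is essentially the paper's own proof. Both directions rest on Proposition~\ref{2.10} together with the unit description from \cite[Corollary~2.14]{15}. For (1)$\Rightarrow$(2), $u\mp1$ is a constant lying in $\Delta(R)+L(R)[x;\alpha]x$ and hence in $\Delta(R)$; for (2)$\Rightarrow$(1), one writes $a_0=\pm1+d$ and absorbs the nilpotent higher coefficients into $L(R)[x;\alpha]x$.
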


\begin{proof}
$(1) \Rightarrow (2)$. Given $u\in U(R)$, we have $u\in U(R[x;\alpha])$ and, by
assumption,
\[
u\in \pm1+\Delta(R[x;\alpha])
=\pm1+\Delta(R)+L(R)[x; \alpha]x.
\]
Therefore, $u\in \pm1+\Delta(R)$, and thus $R$ is a $W\Delta U$-ring.

$(2) \Rightarrow(1) $. Take an arbitrary unit \(u(x) = \sum_{i=0}^n a_i x^i \in U(R[x;\alpha])\). Inspired from \cite[Corollary~2.14]{15}, we infer \(a_0 \in U(R)\) and \(a_i \in \operatorname{Nil}(R)\) for all \(i \ge 1\). Since \(R\) is a \(W\Delta U\)-ring, there is \(d \in \Delta(R)\) such that \(a_0 = \pm 1 + d\).

A direct computation now gives
\[
u(x) = a_0 + \sum_{i=1}^n a_i x^i = (\pm 1 + d) + \sum_{i=1}^n a_i x^i,
\]
so that
\[
u(x) \in \pm 1 + \Delta(R) + \operatorname{Nil}(R)[x;\alpha]x.
\]

However, because \(R\) is weakly \(2\)-primal, \(\operatorname{Nil}(R) = L(R)\). Hence,
\[
u(x) \in \pm 1 + \Delta(R) + L(R)[x;\alpha]x.
\]

Furthermore, utilizing Proposition~\ref{2.10}, we obtain \(\Delta(R) + L(R)[x;\alpha]x = \Delta(R[x;\alpha])\). Consequently,
\[
u(x) \in \pm 1 + \Delta(R[x;\alpha]),
\]
which equals to this that \(R[x;\alpha]\) is a \(W\Delta U\) ring.
\end{proof}

\begin{corollary}\label{3.30}
Let \( R \) be a weakly \(2\)-primal ring. Then, the following two assertions are equivalent:

(1) \( R[x] \) is a $W\Delta$U ring.

(2) \( R \) is a $W\Delta$U ring.
\end{corollary}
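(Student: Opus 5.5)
This is the special case $\alpha = 1_R$ of Theorem~\ref{3.29}. The plan is to check that the hypotheses of that theorem hold automatically when $\alpha$ is the identity, and then read off the equivalence directly.

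First, take $\alpha := 1_R$, the identity endomorphism of $R$. Then $R[x;\alpha]$ is the ordinary polynomial ring $R[x]$, since the rule $xr = \alpha(r)x$ becomes $xr = rx$. Next, $R$ is trivially $\alpha$-compatible, because for all $a,b\in R$ the conditions $ab=0$ and $a\alpha(b)=ab=0$ coincide. The weakly $2$-primal assumption on $R$ is exactly the one in the corollary. So Theorem~\ref{3.29} applies verbatim and gives that $R[x]$ is W$\Delta$U if and only if $R$ is W$\Delta$U.

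For the record, the two directions unwind as follows. For (1) $\Rightarrow$ (2), use Proposition~\ref{2.10} with $\alpha=1_R$, which gives $\Delta(R[x]) = \Delta(R) + L(R)[x]x$, and compare constant terms of a unit $u\in U(R)\subseteq U(R[x])$. For (2) $\Rightarrow$ (1), write a unit of $R[x]$ as $a_0 + \sum_{i\ge 1} a_i x^i$ with $a_0\in U(R)$ and $a_i\in \operatorname{Nil}(R)=L(R)$, using \cite[Corollary~2.14]{15}. Then apply the W$\Delta$U property to $a_0$.

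No step here is a genuine obstacle. The only point needing care is confirming that the cited description of units in $R[x;\alpha]$ and Proposition~\ref{2.10} are indeed stated for arbitrary compatible endomorphisms, including the identity, which they are.
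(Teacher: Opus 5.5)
Your proposal is correct and matches the paper's implicit argument: the corollary is stated without proof precisely because it is Theorem~\ref{3.29} with $\alpha = 1_R$, where $R[x;\alpha]=R[x]$ and $\alpha$-compatibility holds trivially. Your optional unwinding of the two directions via Proposition~\ref{2.10} and \cite[Corollary~2.14]{15} follows the paper's proof of that theorem.
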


\begin{proposition}\label{3}
Let $R$ be a ring, and let
\begin{center}
		${\rm S}_{n}(R)=\left\lbrace (a_{ij})\in {\rm T}_{n}(R)\, | \, a_{11}=a_{22}=\cdots=a_{nn}\right\rbrace.$
\end{center}
Then, the following two points are equivalent:
	
(1) $R$ is a $W\Delta$U ring.
	
(2) ${\rm S}_{n}(R)$ is a $W\Delta$U ring.
\end{proposition}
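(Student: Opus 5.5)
The plan is to reduce to Proposition~\ref{1} via a nilpotent ideal contained in the Jacobson radical. Let $I$ be the set of matrices in ${\rm S}_n(R)$ whose diagonal entries are all zero, that is, the strictly upper triangular matrices. I will first check that $I$ is a two-sided ideal of ${\rm S}_n(R)$. This holds because, for $A\in {\rm S}_n(R)$ with constant diagonal $a$ and $N\in I$, the diagonals of $AN$ and $NA$ are computed entrywise and equal $a\cdot 0=0$.

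Next, $I$ is nilpotent, since $I^n=0$; it therefore lies in the Jacobson radical. Explicitly, for $N\in I$ and any $B\in{\rm S}_n(R)$, the element $1-BN$ has $1-BN-1\in I$ nilpotent, so $1-BN$ is a unit. Hence $I\subseteq J({\rm S}_n(R))$.

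The map ${\rm S}_n(R)\to R$ sending $(a_{ij})$ to the common diagonal value $a_{11}$ is a surjective ring homomorphism. It is multiplicative because the $(i,i)$ entry of a product of upper triangular matrices is the product of the $(i,i)$ entries, and its kernel is exactly $I$. Thus ${\rm S}_n(R)/I\cong R$, and Proposition~\ref{1} gives immediately that ${\rm S}_n(R)$ is W$\Delta$U if and only if $R$ is W$\Delta$U.

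The argument has no real obstacle. The only points needing care are that $I$ is an ideal (the diagonal computation) and that $I\subseteq J$, which follows from nilpotency of the ideal. The case $n=1$ is trivial, since there ${\rm S}_1(R)=R$.
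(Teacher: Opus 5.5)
Your proposal is correct and is essentially the paper's proof: the paper also takes the ideal $I$ of matrices in ${\rm S}_n(R)$ with $a_{11}=0$, notes $I\subseteq J({\rm S}_n(R))$ and ${\rm S}_n(R)/I\cong R$, and applies Proposition~\ref{1}. You additionally check explicitly that $I$ is an ideal, that it is nilpotent and hence lies in $J({\rm S}_n(R))$, and that it is the kernel of the diagonal map, all of which the paper leaves implicit.
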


\begin{proof}
Put \(I := \{ (a_{ij}) \in S_n(R) \mid a_{11} = 0 \}\), whence \(I\subseteq J(S_n(R))\) and, therefore, \(S_n(R)/I \cong R\). Hence, Proposition~\ref{1} is in use to obtain that the desired conclusion follows.	
\end{proof}

Let \(R\) be a ring, and let \(M\) be an \(R\)-bi-module. The \text{trivial extension} of \(R\) by \(M\) is defined as the set
\[
T(R, M) = \{ (r, m) \mid r \in R,\ m \in M \},
\]
with componentwise addition and multiplication given by
\[
(r, m)(s, n) = (rs,\ rn + ms).
\]

This ring is naturally isomorphic to the subring of the \(2 \times 2\) formal matrix ring \(\begin{pmatrix} R & M \\ 0 & R \end{pmatrix}\) consisting of matrices of the form
$\begin{pmatrix} r & m \\ 0 & r \end{pmatrix}$, where $r \in R$ and $m \in M$.

\medskip

In the special case when \(M = R\), we have \(T(R, R) \cong R[x]/(x^2)\).

\medskip

The group of units of \(T(R, M)\) is given by
\[
U(T(R, M)) = T(U(R), M).
\]
Moreover, regarding \cite{7}, the \(\Delta\)-set of the trivial extension satisfies the equality.
\[
\Delta(T(R, M)) = T(\Delta(R), M).
\]

\begin{proposition}\label{trivial}
The trivial extension $T(R, M)$ is a \(W\Delta U\) ring if, and only if, $R$ is a \(W\Delta U\) ring.
\end{proposition}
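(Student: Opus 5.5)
The plan is to argue directly from the two descriptions recalled just before the statement, namely $U(T(R,M))=T(U(R),M)$ and $\Delta(T(R,M))=T(\Delta(R),M)$. As an alternative route, I would note that $I:=T(0,M)=\{(0,m)\mid m\in M\}$ is an ideal of $T(R,M)$ with $I^2=0$, hence $I\subseteq J(T(R,M))$, and $T(R,M)/I\cong R$. Then Proposition~\ref{1} yields the claim at once. I would present the quotient argument as the main proof and the direct computation as a check.

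For the quotient argument, the key steps are as follows. First, verify that $I$ is a two-sided ideal. This holds because $(r,m)(0,n)=(0,rn)$ and $(0,n)(r,m)=(0,nr)$. Next, verify that $(0,m)(0,n)=(0,0)$, so $I$ is nilpotent and therefore contained in the Jacobson radical. Then check that the projection $(r,m)\mapsto r$ is a surjective ring homomorphism with kernel $I$. Finally, invoke Proposition~\ref{1}.

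For the direct check of necessity, take $u\in U(R)$. Then $(u,0)\in U(T(R,M))$, so $(u,0)=\pm(1,0)+(d,m)$ with $(d,m)\in T(\Delta(R),M)$. This forces $u=\pm1+d$ with $d\in\Delta(R)$. For sufficiency, any unit has the form $(u,m)$ with $u\in U(R)$. Writing $u=\pm1+d$, we get $(u,m)=\pm(1,0)+(d,m)$, and $(d,m)\in\Delta(T(R,M))$.

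There is no real obstacle here. The only point needing care is justifying $I\subseteq J(T(R,M))$, which is standard for nilpotent ideals, or else relying on the cited formula for $\Delta(T(R,M))$.
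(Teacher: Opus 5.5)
Your proposal is correct and takes the paper's route: the paper's proof ("along the same lines as Proposition~\ref{3}") likewise factors out the square-zero ideal $T(0,M)\subseteq J(T(R,M))$, identifies the quotient with $R$, and applies Proposition~\ref{1}. Your direct check via $U(T(R,M))=T(U(R),M)$ and $\Delta(T(R,M))=T(\Delta(R),M)$ is also valid, though it only confirms the same result.
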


\begin{proof}
The necessary argument follows along the same lines as the proof of Proposition~\ref{3}.
\end{proof}

Consider now the following set of triangular matrices:
\[
T(R,n) := \left\{
\begin{pmatrix}
	a_1 & a_2 & a_3 & \dots & a_n \\
	0 & a_1 & a_2 & \dots & a_{n-1} \\
	0 & 0 & a_1 & \dots & a_{n-2} \\
	\vdots & \vdots & \vdots & \ddots & \vdots \\
	0 & 0 & 0 & \dots & a_1
\end{pmatrix}
\;\Bigg|\; a_i \in R \right\}, \quad n \ge 2.
\]

It is readily verified that \(T(R,n)\) forms a subring of $T_n(R)$ under ordinary matrix addition and multiplication. By identifying an element of \(T(R,n)\) with the \(n\)-tuple \((a_1, a_2, \dots, a_n)\), one may describe the ring structure as follows: operation addition is componentwise, and operation multiplication is given by
\[
(a_1, a_2, \dots, a_n)(b_1, b_2, \dots, b_n) = (a_1b_1,\ a_1b_2 + a_2b_1,\ \dots,a_1b_n+a_2b_{n-1}+\dots+a_nb_1),
\]
for all \(a_i, b_j \in R\). In fact, \(T(R,n)\) is isomorphic to the quotient ring \(R[x]/(x^n)\).

\medskip

One knows that Wang introduced in \cite{wang} the matrix ring $S_{n,m}(R)$. Supposing $R$ is a ring, then the matrix ring $S_{n,m}(R)$ can be represented as

$$\left\{\begin{pmatrix}
	a & b_1 & \cdots & b_{n-1} & c_{1n} & \cdots & c_{1 n+m-1}\\
	\vdots  & \ddots & \ddots & \vdots & \vdots & \ddots & \vdots \\
	0 & \cdots & a & b_1 & c_{n-1,n} & \cdots & c_{n-1,n+m-1} \\
	0 & \cdots & 0 & a & d_1 & \cdots & d_{m-1} \\
	\vdots  & \ddots & \ddots & \vdots & \vdots & \ddots & \vdots \\
	0 & \cdots & 0 & 0  & \cdots & a & d_1 \\
	0 & \cdots & 0 & 0  & \cdots & 0 & a
\end{pmatrix}\in T_{n+m-1}(R) : a, b_i, d_j,c_{i,j} \in R \right\}.$$

Likewise, let 

$$T_{n,m}(R)=\left\{ \left(\begin{array}{@{}c|c@{}}
	\begin{matrix}
		a & b_1 & b_2 & \cdots & b_{n-1} \\
		0 & a & b_1 & \cdots & b_{n-2} \\
		0 & 0 & a & \cdots & b_{n-3} \\
		\vdots & \vdots & \vdots & \ddots & \vdots \\
		0 & 0 & 0 & \cdots & a
	\end{matrix}
	& \bigzero \\
	\hline
	\bigzero &
	\begin{matrix}
		a & c_1 & c_2 & \cdots & c_{m-1} \\
		0 & a & c_1 & \cdots & c_{m-2} \\
		0 & 0 & a & \cdots & c_{m-3} \\
		\vdots & \vdots & \vdots & \ddots & \vdots \\
		0 & 0 & 0 & \cdots & a
	\end{matrix}
\end{array}\right)\in T_{n+m}(R) : a, b_i,c_j \in R \right\},$$

\medskip

\noindent and let

\medskip

$$U_{n}(R)=\left\{ \begin{pmatrix}
	a & b_1 & b_2 & b_3 & b_4 & \cdots & b_{n-1} \\
	0 & a & c_1 & c_2 & c_3 & \cdots & c_{n-2} \\
	0 & 0 & a & b_1 & b_2 & \cdots & b_{n-3} \\
	0 & 0 & 0 & a & c_1 & \cdots & c_{n-4} \\
	\vdots & \vdots & \vdots & \vdots &  &  & \vdots \\
	0 &0 & 0 & 0 & 0 & \cdots & a
\end{pmatrix}\in T_{n}(R) :  a, b_i, c_j \in R \right\}.$$	

So, we routinely obtain the following.

\begin{proposition}
Let $R$ be a ring. Then, the following four items are equivalent:

(1) $R$ is a \(W\Delta U\) ring.

(2) $T(R,n)$ is a \(W\Delta U\) ring.

(2) $S_{n,m}(R)$ is a \(W\Delta U\) ring.

(3) $T_{n,m}(R)$ is a \(W\Delta U\) ring.

(4) $U_{n}(R)$ is a \(W\Delta U\) ring.
\end{proposition}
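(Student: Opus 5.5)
The plan is to reduce every case to Proposition~\ref{1}. In each of the rings $T(R,n)$, $S_{n,m}(R)$, $T_{n,m}(R)$ and $U_n(R)$, we look for an ideal $I$ contained in the Jacobson radical whose quotient is isomorphic to $R$. Call the ring in question $A$; it is one of these subrings of an upper triangular matrix ring. Every element of $A$ has a constant diagonal entry $a\in R$, so the map $\varphi\colon A\to R$ sending a matrix to its common diagonal entry is available in all four cases.

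First, I will check that $\varphi$ is a surjective unital ring homomorphism. Additivity is clear. Multiplicativity holds because the diagonal of a product of upper triangular matrices is the entrywise product of the diagonals. Surjectivity and unitality come from the scalar matrices $aI$, which lie in each $A$: set all the $b_i,c_j,d_k$ and $c_{ij}$ to zero.

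Set $I:=\ker\varphi$, the strictly upper triangular part of $A$. Then $A/I\cong R$.

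The key step is to show $I\subseteq J(A)$. For $x\in I$ and any $y\in A$, the product $xy$ lies in $I$ and is strictly upper triangular in $T_N(R)$, where $N$ is the matrix size. Hence $(xy)^N=0$, so $1-xy$ is invertible, with inverse $\sum_{k<N}(xy)^k$.

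This inverse must lie in $A$, not merely in $T_N(R)$. That is automatic, since it is a polynomial in $xy\in A$ and $A$ is a subring. Thus $I$ is an ideal (being a kernel) consisting of elements $x$ with $1-xy\in U(A)$ for all $y\in A$, so $I\subseteq J(A)$.

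Proposition~\ref{1} then gives: $A$ is W$\Delta$U if and only if $A/I\cong R$ is W$\Delta$U. This is exactly the argument of Proposition~\ref{3} for $S_n(R)$, and the case $T(R,n)\cong R[x]/(x^n)$ could alternatively be handled directly with $I=(x)/(x^n)$.

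The only genuine obstacle is confirming that each of the given sets really is a subring closed under multiplication, and that its diagonal is constant. For $S_{n,m}(R)$, $T_{n,m}(R)$ and $U_n(R)$ the pattern of the off-diagonal entries is intricate; for instance, $U_n(R)$ has alternating $b$- and $c$-rows. I will take the subring property as given, as the paper does, citing Wang's construction where needed. This is harmless, because the argument above never uses the specific off-diagonal pattern: it only uses that $A$ is a unital subring of $T_N(R)$ with scalar diagonal containing all scalar matrices. All four equivalences then follow uniformly.
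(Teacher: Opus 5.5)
Your proposal is correct and is the paper's route: the paper proves this "analogously to Proposition~\ref{3}". That means taking the ideal of elements with zero diagonal, noting it lies in the Jacobson radical and that the quotient is isomorphic to $R$, and then applying Proposition~\ref{1}. Your nilpotency argument for $I\subseteq J(A)$, together with the observation that only the constant-diagonal unital subring structure is used, supplies the details the paper omits.
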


\begin{proof}
The proof is, notably, analogous to that of Proposition~\ref{3}, so we omit the details.
\end{proof}

We are now concerned with group rings. Let \(R\) be an arbitrary ring and let \(G\) be an arbitrary group. As is customary, \(RG\) denotes the group ring of \(G\) over \(R\). The kernel of the canonical augmentation map \(\varepsilon: RG \to R\), given by \(\varepsilon\big(\sum_{g\in G} a_g g\big) = \sum_{g\in G} a_g\), is denoted by \(\varepsilon(RG)\) and is referred to as the augmentation ideal of \(RG\).

\medskip

Recall that a group \(G\) is a {\it \(p\)-group} provided that every element of \(G\) has order which is a power of a fixed prime \(p\). Furthermore, \(G\) is said to be {\it locally finite} if each finitely generated subgroup of \(G\) is finite.

\medskip

We call a subring $S$ of $R$ a {\it good subring} whenever $U(R) \cap S = U(S)$.

\begin{lemma}\label{goodsubring}
Let $R$ be a \(W\Delta U\) ring and \(S\) a good subring of \(R\). Then, \(S\) is a \(W\Delta U\) ring. In particular, this always applies to the center \(C = C(R)\) of \(R\).
\end{lemma}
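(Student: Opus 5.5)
The plan is to reduce to Proposition~\ref{subring}: it suffices to show that $S\cap\Delta(R)\subseteq\Delta(S)$ whenever $S$ is a good subring of $R$. Every unit of $S$ is then a unit of $R$, so it has the form $\pm1+d$ with $d\in\Delta(R)\cap S\subseteq\Delta(S)$. (Note that $v\pm1\in S$ automatically, so $d\in S$ is free.)

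For the inclusion, take $d\in S\cap\Delta(R)$ and an arbitrary $v\in U(S)$. Since $U(S)\subseteq U(R)$, the defining property of $\Delta(R)$ (that $\Delta(R)+U(R)\subseteq U(R)$, i.e. $1-ud\in U(R)$ for all units $u$) gives $d+v\in U(R)$. But $d+v\in S$, so $d+v\in U(R)\cap S=U(S)$ by goodness. Hence $d+U(S)\subseteq U(S)$.

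I would then invoke the characterization of $\Delta$ due to Leroy and Matczuk \cite{2}: $\Delta(S)$ is exactly the set of $r\in S$ with $r+U(S)\subseteq U(S)$. This gives $d\in\Delta(S)$ and closes the argument. If one prefers the form $1-ud\in U(S)$, the same reasoning applies with $-ud$ in place of $d$: since $ud\in\Delta(R)\cap S$, we get $1-ud\in U(R)\cap S=U(S)$.

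For the ``in particular'' part, I would check that the center $C=C(R)$ is a good subring. If $c\in C\cap U(R)$, then for every $r\in R$, from $cr=rc$ we get $rc^{-1}=c^{-1}r$ by multiplying on both sides by $c^{-1}$. So $c^{-1}\in C$, and $c\in U(C)$. Alternatively, one can simply quote the last sentence of Proposition~\ref{subring}. The only real point is the use of the unit-translation characterization of $\Delta$, and I do not expect any obstacle beyond that.
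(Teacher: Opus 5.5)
Your proof is correct and is essentially the argument the paper intends. The paper just defers to the proof of \cite[Proposition 2.4(7)]{7}, which, like the paper's own ``Method 1'' for $R\subseteq RG$, uses goodness to get $d+U(S)\subseteq U(S)$ for $d\in S\cap\Delta(R)$ and then applies the Leroy--Matczuk characterization $\Delta(S)=\{s\in S : s+U(S)\subseteq U(S)\}$. Routing this through Proposition~\ref{subring}, together with your direct check that $C(R)$ is a good subring, is a clean way to present it.
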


\begin{proof}
The argument follows deducing as in the proof of \cite[Proposition 2.4(7)]{7}, so we drop off details.
\end{proof}

\begin{lemma}\label{4.14} \cite[Lemma $2$]{26}.
Let $p$ be a prime with $p\in J(R)$. If $G$ is a locally finite $p$-group, then $\varepsilon(RG) \subseteq J(RG)$.
\end{lemma}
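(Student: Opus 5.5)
We would follow the classical argument of Connell, as adapted to rings by Zhou and others. The statement is that $\varepsilon(RG)\subseteq J(RG)$ when $p\in J(R)$ and $G$ is a locally finite $p$-group. The plan is to show that every element $x\in\varepsilon(RG)$ is right quasi-regular, that is, $1-xy\in U(RG)$ for every $y\in RG$. Since $\varepsilon(RG)$ is a two-sided ideal, this gives the inclusion.

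\textbf{Step 1: reduction to a finite subgroup.} Fix $x\in\varepsilon(RG)$ and $y\in RG$. The supports of $x$ and $y$ generate a finitely generated subgroup $H$ of $G$, and $H$ is finite because $G$ is locally finite; it is a finite $p$-group. Then $xy\in\varepsilon(RH)$, since $\varepsilon(RH)=\varepsilon(RG)\cap RH$ and $\varepsilon(RG)$ is an ideal. Because $RH$ is a unital subring of $RG$, it suffices to show $\varepsilon(RH)\subseteq J(RH)$.

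\textbf{Step 2: nilpotency modulo $p$.} The ideal $\varepsilon(RH)$ is generated as a left (or right) $R$-module by the elements $1-h$ with $h\in H$. Set $\bar R=R/pR$ and work in $\bar R H$. For a finite $p$-group $H$ there is a standard filtration argument, by induction on $|H|$ using a central subgroup of order $p$. The base case is $H$ cyclic of order $p^k$, where
\[
(1-h)^{p^k}\equiv 1-h^{p^k}=0 \pmod{p}
\]
by the binomial theorem in characteristic $p$, since $1$ and $h$ commute. This shows that $\varepsilon(\bar RH)$ is a nilpotent ideal. The coefficients lie in a possibly noncommutative $R$, but group elements commute with $R$, so the argument over $\mathbb{F}_p H$ transfers verbatim. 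Consequently $\varepsilon(RH)^N\subseteq pRH$ for some $N$.

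\textbf{Step 3: lifting through $pRH$.} We have $pRH\subseteq J(R)H$, and $J(R)H\subseteq J(RH)$ holds because $RH$ is a finitely generated free $R$-module; for finite $H$ this is the standard fact $J(R)\,RH\subseteq J(RH)$. Hence $\varepsilon(RH)$ is nilpotent modulo an ideal contained in $J(RH)$. Its image in $RH/J(RH)$ is therefore a nilpotent ideal, hence zero, so $\varepsilon(RH)\subseteq J(RH)$.

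The main obstacle is Step 2, namely establishing nilpotency of the augmentation ideal of $\bar R H$ for a noncyclic finite $p$-group with noncommutative coefficients. We would handle it by induction on $|H|$. Choose a central element $z$ of order $p$, and note that $(1-z)$ generates a nilpotent ideal, since $(1-z)^p=0$ and $z$ is central. The quotient by this ideal is $\bar R(H/\langle z\rangle)$, whose augmentation ideal is nilpotent by the induction hypothesis. An ideal that is nilpotent modulo a nilpotent ideal is itself nilpotent.
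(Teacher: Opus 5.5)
The paper gives no proof of this lemma; it quotes it from Zhou [Lemma 2], so there is no in-paper argument to compare against. Your proposal is a correct, self-contained proof along the classical Connell--Zhou lines.

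\begin{itemize}
\item Step 1 is valid. Right quasi-regularity reduces everything to a finite $p$-subgroup $H$: from $xy\in\varepsilon(RH)\subseteq J(RH)$ you get $1-xy\in U(RH)\subseteq U(RG)$.
\item Step 2 is sound. The induction on $|H|$ works because $z$ is central of order $p$, so $(1-z)\bar RH$ is a two-sided ideal with $p$-th power zero. Also $\bar RH/(1-z)\bar RH\cong \bar R(H/\langle z\rangle)$, so $\varepsilon(\bar RH)$ is nilpotent modulo a nilpotent ideal.
\item Step 3 is sound. A nilpotent ideal of the semiprimitive ring $RH/J(RH)$ is zero.
\end{itemize}

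One justification in Step 3 should be tightened. Finite generation of $RH$ as an $R$-module is not, by itself, why $J(R)\subseteq J(RH)$. For example, $T_2(K)\subseteq M_2(K)$ is a finite extension of a field-algebra with $J(T_2(K))\neq 0=J(M_2(K))$.

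What makes it work for group rings is that the module generators $h\in H$ centralize $R$. For a simple right $RH$-module $M=\sum_{h\in H}(mh)R$, the set $MJ(R)$ is then an $RH$-submodule. Nakayama's lemma for the finitely generated $R$-module $M$ forces $MJ(R)=0$. Hence $J(R)$ annihilates every simple $RH$-module, giving $pRH\subseteq J(R)RH\subseteq J(RH)$.

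With that remark made explicit, your argument is complete.
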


The following claim is our pivotal tool for proving the central results on \(W\Delta U\) group rings listed below.

\begin{proposition}
(1) If $RG$ is a \(W\Delta U\) ring, then $R$ is also a \(W\Delta U\) ring.

(2) If \( R \) is a \(W\Delta U\) ring and \( G \) is a locally finite \( p \)-group, where \( p \) is a prime number such that \( p \in J(R) \), then \( RG \) is a \(W\Delta U\) ring.
\end{proposition}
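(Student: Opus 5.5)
For part (1), I would regard $R$ as a subring of $RG$ via $r\mapsto r\cdot 1_G$ and apply Lemma~\ref{goodsubring}. So it suffices to check that $R$ is a good subring of $RG$, i.e. $U(RG)\cap R=U(R)$. The inclusion $U(R)\subseteq U(RG)\cap R$ is trivial. For the reverse, take $r\in R$ invertible in $RG$ with inverse $\beta$. Applying the augmentation $\varepsilon$, which is a unital ring homomorphism restricting to the identity on $R$, gives $r\,\varepsilon(\beta)=1=\varepsilon(\beta)\,r$. Hence $r\in U(R)$, and part (1) follows.

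Alternatively, one can avoid Lemma~\ref{goodsubring}. Since $R$ is a retract of $RG$ via $\varepsilon$, each $u\in U(R)$ is a unit of $RG$, so $u=\pm1+\delta$ with $\delta\in\Delta(RG)$. Applying $\varepsilon$ gives $u=\pm1+\varepsilon(\delta)$. It then remains to show $\varepsilon(\Delta(RG))\subseteq\Delta(R)$. For $v\in U(R)$ we have $1-v\delta\in U(RG)$, so $1-v\varepsilon(\delta)\in U(R)$, which is exactly the membership criterion for $\Delta(R)$. I would present the good-subring argument and mention this as a remark.

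For part (2), Lemma~\ref{4.14} gives $\varepsilon(RG)\subseteq J(RG)$, and $\varepsilon(RG)$ is an ideal of $RG$. Since $RG/\varepsilon(RG)\cong R$ via the augmentation, Proposition~\ref{1}, applied with $I=\varepsilon(RG)\subseteq J(RG)$, yields that $RG$ is W$\Delta$U if and only if $R$ is. This gives (2).

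The only point needing care is the good-subring verification in (1). It is handled by the augmentation map being a unital ring homomorphism that restricts to the identity on $R$. Part (2) is immediate from Lemma~\ref{4.14} together with Proposition~\ref{1}, so I expect no real obstacle there.
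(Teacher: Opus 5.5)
Your proof is correct and follows the paper's own argument. For (1), your main argument is the paper's ``Method 2'' (good subring plus Lemma~\ref{goodsubring}) and your remark is essentially its ``Method 1''; your augmentation argument also supplies the check $U(RG)\cap R\subseteq U(R)$, which the paper uses without proof. Part (2) is exactly the paper's argument via Lemma~\ref{4.14} and Proposition~\ref{1}.
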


\begin{proof} We present two different ideas of a confirmation of the statement like these:

\medskip

(1) \textbf{Method 1:} Let \(u \in U(R)\), whence \(u \in U(RG)\) as well. Hence, \(u = \pm1 + r\) for some \(r \in \Delta(RG)\). Since \(r = \pm1 - u \in R\), it remains to check that \(r \in \Delta(R)\). This is easy to check: in fact, for any \(v \in U(R) \subseteq U(RG)\), we have \(v - r \in U(RG) \cap R \subseteq U(R)\). Thus, \(r \in \Delta(R)\).

\medskip
	
\textbf{Method 2:} Because \(R\) is a good subring of \(RG\), the claim follows directly from Lemma~\ref{goodsubring}.

\medskip
	
(2) Observe that Lemma~\ref{4.14} implies \(\varepsilon(RG) \subseteq J(RG)\). Moreover, since one has that \(RG / \varepsilon(RG) \cong R\), an application of Proposition~\ref{1} yields that \(RG\) is a \(W\Delta U\) ring.
\end{proof}

\begin{proposition}
Let \(R\) be a ring, \(G\) a group, and \(H\) a subgroup of \(G\). If \(RG\) is a \(W\Delta U\) ring, then \(RH\) is also a \(W\Delta U\) ring.
\end{proposition}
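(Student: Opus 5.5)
We reduce the statement to Lemma~\ref{goodsubring}: it suffices to show that $RH$ is a good subring of $RG$, i.e. $U(RG)\cap RH = U(RH)$. The inclusion $U(RH)\subseteq U(RG)\cap RH$ is trivial, since $RH$ is a unital subring of $RG$ with the same identity $1=1_R\cdot e_G$.

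For the reverse inclusion, we use the standard projection $\pi_H: RG\to RH$, $\sum_{g\in G} a_g g\mapsto \sum_{h\in H} a_h h$. This map is additive and is a homomorphism of $RH$-bimodules, that is, $\pi_H(xy)=x\,\pi_H(y)$ and $\pi_H(yx)=\pi_H(y)\,x$ for $x\in RH$ and $y\in RG$. To see this, it is enough to check basis elements: for $h\in H$ and $g\in G$ we have $hg\in H$ if and only if $g\in H$, and similarly for $gh$.

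Now let $u\in RH$ be a unit of $RG$ with inverse $v\in RG$. Applying $\pi_H$ to $uv=1$ gives $u\,\pi_H(v)=\pi_H(1)=1$. Applying it to $vu=1$ gives $\pi_H(v)\,u=1$. Hence $\pi_H(v)\in RH$ is a two-sided inverse of $u$ in $RH$, so $u\in U(RH)$. This shows that $RH$ is a good subring of $RG$, and Lemma~\ref{goodsubring} then yields that $RH$ is $W\Delta U$.

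The only step needing care is the bimodule property of $\pi_H$, and that reduces to the coset observation above. One point should be flagged. Lemma~\ref{goodsubring} is cited in the paper without proof. If I prefer a self-contained argument, I can verify directly, in the spirit of Method~1 of the preceding proposition, that $RH\cap\Delta(RG)\subseteq\Delta(RH)$. Indeed, for $r\in RH\cap\Delta(RG)$ and $v\in U(RH)\subseteq U(RG)$ we get $v-r\in U(RG)\cap RH=U(RH)$, so $r\in\Delta(RH)$ by the characterization $\Delta(S)=\{r\in S: r+U(S)\subseteq U(S)\}$ from \cite{2}. Proposition~\ref{subring} then applies to $S=RH$ and finishes the proof.
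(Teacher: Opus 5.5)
Your proof is correct and takes the paper's route: the paper simply asserts that $RH$ is a good subring of $RG$ and then applies Lemma~\ref{goodsubring}. Your projection argument with $\pi_H$ supplies the justification of the good-subring property that the paper leaves implicit, and your self-contained alternative via Proposition~\ref{subring} is also valid.
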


\begin{proof}
Since \(RH\) is a good subring of \(RG\), the conclusion follows immediately from Lemma~\ref{goodsubring}.
\end{proof}

\begin{lemma}\label{torsion}
If \(RG\) is a \(W\Delta U\) ring, then \(G\) must be a torsion group.
\end{lemma}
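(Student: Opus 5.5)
Suppose $g\in G$ has infinite order; we aim for a contradiction. The plan is to reduce to the group ring of the infinite cyclic group $\langle g\rangle\cong\mathbb{Z}$ and then show directly that this ring is not W$\Delta$U.

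\textbf{Step 1: reduction.} By the preceding proposition, since $RG$ is W$\Delta$U, so is $RH$ for $H=\langle g\rangle$. Now $RH\cong R[t,t^{-1}]$, the Laurent polynomial ring. So it suffices to show that $R[t,t^{-1}]$ is never W$\Delta$U for a nonzero ring $R$.

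\textbf{Step 2: setup.} The element $t$ is a unit of $R[t,t^{-1}]$. By the W$\Delta$U property, either $t-1\in\Delta(R[t,t^{-1}])$ or $t+1\in\Delta(R[t,t^{-1}])$. Write $d=t\mp1$ for the element in question. Since $\Delta$ is closed under multiplication by units, $t^{-1}d=1\mp t^{-1}\in\Delta$.

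\textbf{Step 3: contradiction.} The key property is that $1+\Delta\subseteq U$; more generally $u+\Delta\subseteq U$ for every unit $u$, which is the defining property of $\Delta$ as used throughout the paper.
\begin{itemize}
\item In the case $t-1\in\Delta$: adding the unit $-t$ gives $(t-1)-t=-1$, which is harmless. So instead use the unit $t^{2}$. Then $t^{2}-(t-1)=t^{2}-t+1$ must be a unit.
\item In the case $t+1\in\Delta$: similarly, $t^{2}-(t+1)$ must be a unit.
\end{itemize}

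Units of $R[t,t^{-1}]$ over a general $R$ can be complicated when $R$ has nilpotents. To control this, I would pass to a prime (or simply a domain-like) quotient. Choose a maximal ideal $M$ of $R$, and let $k=R/M$ be a simple ring. Any unit of $R[t,t^{-1}]$ maps to a unit of $k[t,t^{-1}]$. Over a simple ring $k$, the units of $k[t,t^{-1}]$ should be exactly the monomials $at^{n}$ with $a\in U(k)$, since the lowest-degree and highest-degree coefficients multiply in the way a prime ring requires. Then $t^{2}-t+1$ and $t^{2}-t-1$ are not monomials, because their coefficients $1$ and $\pm1$ are nonzero in $k$. This gives the contradiction.

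\textbf{Main obstacle.} The hardest step is justifying the unit description in $k[t,t^{-1}]$ for a noncommutative simple ring $k$, which may have zero divisors. For a prime ring the degree argument needs care: if $fg=1$, then products of extreme coefficients could vanish. As an alternative, I would evaluate more cleverly. Use the ring homomorphism $R[t,t^{-1}]\to R[t,t^{-1}]$ given by $t\mapsto t^{m}$, or pass to $\mathbb{Z}$-coefficients via the central subring $\mathbb{Z}[t,t^{-1}]\cdot1$, reducing modulo a prime $p$ with $R/pR\neq0$. Failing that, I would use the $\mathbb{Z}$-grading by $t$-degree directly: show that a unit $\sum a_it^{i}$ of $R[t,t^{-1}]$ has all coefficients other than the extreme ones nilpotent modulo the prime radical. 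This follows by reducing modulo a prime ideal, where the standard argument (comparing leading terms against a nonzero two-sided ideal) does work. Then $t^{2}-t+1$ would require $1$ to be nilpotent modulo that prime ideal, which is impossible.
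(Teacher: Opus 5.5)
\textbf{There is a genuine gap in Step 3.} Your reduction to $R\langle g\rangle\cong R[t,t^{-1}]$ is correct. So is your conclusion that $t^{2}-t+1$ or $t^{2}-t-1$ must be a unit. This is essentially the paper's route: it multiplies $1\mp g\in\Delta(RG)$ by $g$ and gets that $1+g-g^{2}$ or $1+g+g^{2}$ is a unit.

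The gap is the claim that these elements are not units. Your primary justification is that units of $k[t,t^{-1}]$ over a simple ring $k$ are monomials. That is false. Take a field $F$ and matrix units $e_{ij}$ of $M_2(F)$. Then $1+e_{12}t$ is a non-monomial unit of $M_2(F)[t,t^{-1}]$, with inverse $1-e_{12}t$.

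Your fallback is also false, namely that the non-extreme coefficients of a unit are nilpotent modulo the prime radical. Over the prime ring $M_2(F)$, whose prime radical is zero, consider the unit
\[
(1+e_{12}t)(1+e_{21}t)=1+(e_{12}+e_{21})t+e_{11}t^{2}.
\]
Its middle coefficient satisfies $(e_{12}+e_{21})^{2}=1$, so it is not nilpotent. The remaining alternatives are only gestured at: the substitution $t\mapsto t^{m}$, and reduction modulo a prime $p$. The latter, as stated, does not cover $\mathbb{Q}$-algebras, where no such $p$ exists.

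\textbf{The missing idea.} You do not need any description of the unit group, only that this one element is not a unit. For $t^{2}-t\pm1$ the lowest and highest coefficients are $\pm1$, which are units and hence not zero-divisors. Suppose $h=\sum_{i=n}^{m}a_it^{i}$ with $n\le m$ and $a_n\neq0\neq a_m$. Then $(t^{2}-t\pm1)h$ has lowest term $\pm a_nt^{n}\neq0$ and highest term $a_mt^{m+2}\neq0$. Since $m+2>n$, the product has nonzero terms in two distinct degrees, so it cannot equal $1$.

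This works over any nonzero ring $R$ with no passage to quotients, and it is exactly the expansion argument the paper uses. Your worry that products of extreme coefficients might vanish only concerns arbitrary units, not this specific element with invertible extreme coefficients.
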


\begin{proof}
Suppose on the contrary that some element \(g \in G\) has infinite order. Since \(RG\) is a \(W\Delta U\) ring, we have either \(1 - g \in \Delta(RG)\) or \(1 + g \in \Delta(RG)\). So, \cite[Lemma 2.1(3)]{7} illustrates that either \(g(1 - g)=g-g^2 \in \Delta(RG)\) or \(g(1 +g)=g+g^2 \in \Delta(RG)\), so it must be that \(1 + g - g^2 \in U(RG)\) or \(1 + g + g^2 \in U(RG)\). We focus on the case where \(1 + g - g^2 \in U(RG)\). Then, there exist two integers \(n < m\) and coefficients \(a_i \in R\) with \(a_n \neq 0 \neq a_m\) such that
\[
(1 + g - g^2) \sum_{i=n}^m a_i g^i = 1.
\]
Expanding the product by the Newton's binomial formula leads us directly to a contradiction. A similar argument applied to \(1 + g + g^2 \in U(RG)\) yields the same contradiction. Hence, every element of \(G\) must have finite order, that means \(G\) is a torsion group, as claimed.
\end{proof}

\begin{proposition}\label{torsion2}
If \(RG\) is a \(W\Delta U\) ring and \(2 \in \Delta(RG)\), then \(G\) must be a \(2\)-group.	
\end{proposition}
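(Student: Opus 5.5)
By Lemma~\ref{torsion}, $G$ is torsion, so it is enough to rule out elements of odd prime order. Suppose, for a contradiction, that some $g\in G$ has order divisible by an odd prime $p$. Some power $h$ of $g$ then has order exactly $p$. The idea is to pass to the commutative group ring $R\langle h\rangle$, where $\Delta$ coincides with the Jacobson radical, and to use the idempotent built from the norm element $1+h+\dots+h^{p-1}$.

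\textbf{Step 1: $RG$ is $\Delta U$.} Since $RG$ is W$\Delta$U and $2\in\Delta(RG)$, Proposition~\ref{mino} gives that $RG$ is a $\Delta U$-ring.

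\textbf{Step 2: pass to $S:=R\langle h\rangle$.} Put $H:=\langle h\rangle$ and $S:=RH$. As in the preceding proposition, $S$ is a good subring of $RG$. The $\Delta U$ analogue of Lemma~\ref{goodsubring}, namely \cite[Proposition 2.4(7)]{7}, then shows that $S$ is a $\Delta U$-ring. We need $R$ commutative here: then $S$ is commutative, so $\Delta(S)$ is an ideal and $\Delta(S)=J(S)$. By \cite[Proposition~2.4(1)]{7}, a $\Delta U$-ring has $2\in\Delta$, so $2\in J(S)$. Applying the $\Delta U$ property to the unit $h$ also gives $h-1\in J(S)$. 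This is the step I expect to be the main obstacle. If $R$ is noncommutative, $S$ need not be commutative. In that case I would try to prove directly that $h-1\in\Delta(S)$ together with $2\in\Delta(S)$ still forces $p\in U(S)$ and that the idempotent below lies in $1+\Delta(S)$. For that I would use that $\Delta(S)$ is a subring and that $e\in Id(S)\cap(1+\Delta(S))$ implies $e=1$, since $e$ is then a unit.

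\textbf{Step 3: the contradiction.} Since $p$ is odd, we have $p\in 1+2\mathbb{Z}\subseteq 1+J(S)$, so $p\in U(S)$. Let $N:=1+h+\dots+h^{p-1}$. Then $hN=N$, hence $N^2=pN$, and therefore $e:=p^{-1}N$ is an idempotent of $S$. On the other hand, $h\equiv1\pmod{J(S)}$ gives $N\equiv p\pmod{J(S)}$, so $e\in 1+J(S)\subseteq U(S)$. An idempotent unit equals $1$, so $e=1$, that is, $N=p\cdot1$. The group elements $1,h,\dots,h^{p-1}$ are distinct and $p\ge3$, so comparing the coefficients of $h$ on both sides of $N=p\cdot 1$ yields $1=0$ in $R$. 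This contradicts $R\neq 0$.

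Hence no element of $G$ has odd prime order. Since $G$ is torsion, every element has order a power of $2$, i.e.\ $G$ is a $2$-group.
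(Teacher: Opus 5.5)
Your overall strategy works, but Step 2 contains a false claim. It is not true that commutativity of $S$ makes $\Delta(S)$ an ideal, and hence equal to $J(S)$. Take the commutative ring $A=\mathbb{Z}_{(2)}[x]$. Units of a polynomial ring over a domain are constant, so $U(A)=U(\mathbb{Z}_{(2)})$. For odd $a,b$ the element $2+a/b=(2b+a)/b$ is again a unit, so $2\in\Delta(A)$. Yet $1+2x\notin U(A)$, so $2\notin J(A)=0$, and $2x\notin\Delta(A)$, so $\Delta(A)$ is not an ideal. This $A$ even satisfies $U(A)=1+2\mathbb{Z}_{(2)}=1+\Delta(A)$, so it is a commutative $\Delta U$-ring; the $\Delta U$ property does not rescue the claim either. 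Hence your derivations of $2\in J(S)$ and $h-1\in J(S)$ are unjustified. Separately, assuming $R$ commutative adds a hypothesis the statement does not have, and you leave the noncommutative case as an intention only.

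The repair is the one you sketch at the end of Step 2. It makes both the commutativity assumption and the passage to $S$ unnecessary: work directly in the $\Delta U$-ring $RG$.
\begin{itemize}
\item $\Delta(RG)$ is additively closed and contains $2$, so $2\mathbb{Z}\subseteq\Delta(RG)$, and therefore $p\in 1+\Delta(RG)\subseteq U(RG)$.
\item Applying the $\Delta U$ property to each unit $h^i$ gives $h^i-1\in\Delta(RG)$, hence $N-p=\sum_{i=0}^{p-1}(h^i-1)\in\Delta(RG)$.
\item Since $\Delta(RG)$ is closed under multiplication by units, $e-1=p^{-1}(N-p)\in\Delta(RG)$.
\end{itemize}
So $e$ is an idempotent unit, hence $e=1$, and your coefficient comparison finishes the proof.

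With this change you have a valid alternative to the paper's argument. The paper instead shows $1+g=(1-g)+2g\in\Delta(RG)$, hence $\sum_{i=1}^{2k}g^i=\sum_{j=0}^{k-1}g^{2j}(g+g^2)\in\Delta(RG)$. Thus the norm element $\sum_{i=0}^{p-1}g^i$ is itself a unit, and $(1-g)\sum_{i=0}^{p-1}g^i=0$ forces $g=1$. The paper's route avoids inverting $p$; yours uses the idempotent $p^{-1}N$ instead. Your explicit appeal to Lemma~\ref{torsion} to exclude elements of infinite order supplies a step the paper's write-up leaves implicit.
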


\begin{proof}
Since \(2 \in \Delta(RG)\), it follows from Proposition~\ref{mino} that \(RG\) is, in fact, a \(\Delta U\) ring.

\medskip

\textbf{Claim:} For any \(g \in G\) and any positive integer \(k\), the sum \(\sum_{i=0}^{2k} g^i\) lies in \(U(RG)\).

\medskip

We need to demonstrate the method only for \(k = 1\) and \(k = 2\) as the general case follows by a standard inductive argument.
	
\medskip

\textbf{Case \(k = 1\):} Since \(2 \in \Delta(RG)\) and \(g \in U(RG)\), Lemma 2.1(3) of \cite{7} yields \(2g \in \Delta(RG)\). Since \(1 - g \in \Delta(RG)\) and \(2g \in \Delta(RG)\), Lemma 2.1(1) from \cite{7} yields \(1 + g \in \Delta(RG)\). Hence, \(g(1+g) = g + g^2 \in \Delta(RG)\), and therefore \(1 + g + g^2 \in U(RG)\).

\medskip
		
\textbf{Case \(k = 2\):} As before, \(g, g^2 \in U(RG)\), while we already have \(g + g^2 \in \Delta(RG)\).
Multiplying by \(g^2\) gives \(g^3 + g^4 \in \Delta(RG)\). Consequently, \(g + g^2 + g^3 + g^4 \in \Delta(RG)\), and thus
\[
1 + g + g^2 + g^3 + g^4 \in U(RG).
\]
	
Proceeding inductively, one establishes that \(\sum_{i=0}^{2k} g^i \in U(RG)\) for every \(k \ge 1\).

Now, we assume that some element \(g \in G\) has order \(p\) not dividing \(2\). Then, \(p\) is odd, so we can write \(p-1 = 2k\) for a suitable \(k\). By the above Claim, one knows that \(\sum_{i=0}^{2k} g^i \in U(RG)\). However,
\[
(1 - g) \sum_{i=0}^{2k} g^i = 1 - g^{p} = 0,
\]
and multiplying by the inverse of \(\sum_{i=0}^{2k} g^i\) perceives that \(1 - g = 0\), i.e., \(g = 1\) -- a contradiction. Finally, each element of \(G\) has order which is a power of \(2\); in other words, \(G\) is a \(2\)-group after all, as asserted.
\end{proof}

The following necessary and sufficient condition is worthy of recording.

\begin{proposition}
Let \(R\) be an Artinian ring. Then, \(RG\) is a \(W\Delta U\) ring if, and only if, \((R/J(R))G\) is a \(W\Delta U\) ring.
\end{proposition}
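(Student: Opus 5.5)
The plan is to apply Proposition~\ref{1} to the ideal $I := J(R)G$ of $RG$, i.e., the set of elements $\sum a_g g$ with every $a_g \in J(R)$. This is a two-sided ideal, since $J(R)$ is an ideal of $R$ and multiplication in $RG$ is computed coefficientwise through products in $R$. Moreover $RG/J(R)G \cong (R/J(R))G$, via the natural surjection $RG \to (R/J(R))G$ that reduces coefficients, whose kernel is exactly $J(R)G$. So once $J(R)G \subseteq J(RG)$ is known, Proposition~\ref{1} gives at once that $RG$ is W$\Delta$U if and only if $RG/J(R)G \cong (R/J(R))G$ is W$\Delta$U, which is the claim.

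The only real work is the inclusion $J(R)G \subseteq J(RG)$, and this is where the Artinian hypothesis is used. Since $R$ is (right) Artinian, $J(R)$ is nilpotent, say $J(R)^k = 0$. Then $(J(R)G)^k \subseteq J(R)^k G = 0$, because any product of $k$ elements of $J(R)G$ has coefficients that are sums of products of $k$ elements of $J(R)$. Thus $J(R)G$ is a nilpotent two-sided ideal of $RG$, and every nil (a fortiori nilpotent) two-sided ideal lies in the Jacobson radical. Hence $J(R)G \subseteq J(RG)$.

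I do not expect a genuine obstacle. The one point to check carefully is that $J(R)G$ is two-sided and that its $k$-th power vanishes; both follow from the elementwise description above, since $J(R)$ is closed under left and right multiplication by $R$ and the group elements commute with coefficients. Note also that no condition on $G$ is needed, because nilpotency of $J(R)$ passes directly to $J(R)G$ regardless of $G$. With the inclusion in hand, Proposition~\ref{1} finishes the proof.
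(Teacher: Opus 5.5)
Your proposal is correct and follows the paper's route: you establish $J(R)G\subseteq J(RG)$, use $RG/J(R)G\cong (R/J(R))G$, and pass W$\Delta$U back and forth across this quotient. The only differences are that you prove the inclusion directly from the nilpotency of $J(R)$, where the paper cites Connell's Proposition~3, and that you invoke Proposition~\ref{1} with $I=J(R)G$, which is the precise tool needed here (the paper cites Corollary~\ref{2}).
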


\begin{proof}
Since \(R\) is Artinian, we may apply \cite[Proposition 3]{con} to obtain \(J(R)G \subseteq J(RG)\). Moreover, the natural isomorphism \((R/J(R))G \cong RG/J(R)G\) is always fulfilled. The desired equivalence now follows at once from Corollary~\ref{2}.
\end{proof}

We finish our work with the following.

\begin{lemma}
Let \(RG\) be a \(W\Delta U\) ring with \(3 \in \Delta(RG)\), and suppose \(G\) is a \(2\)-group. Then, \(G\) has exponent \(2\).
\end{lemma}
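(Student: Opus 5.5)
The plan is to exploit the rigidity that $3\in\Delta(RG)$ imposes on idempotents of $RG$: an element of order $4$ in $G$ produces an idempotent of $RG$ that is neither $0$ nor $1$. Throughout, $R$ is assumed nonzero, since otherwise the statement is vacuous.

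\textbf{Step 1: consequences of $3\in\Delta(RG)$.} First I apply Proposition~\ref{memeber} to the W$\Delta$U ring $RG$. Part (2) gives $2\in U(RG)$, and since $2$ is central, $2^{-1}$ makes sense. The ``in particular'' clause of the same proposition gives $Id(RG)=\{0,1\}$.

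\textbf{Step 2: reduction to an element of order $4$.} Suppose, for contradiction, that $G$ does not have exponent $2$. Pick $g\in G$ with $g^2\neq 1$. Because $G$ is a $2$-group, the order of $g$ is $2^n$ with $n\ge 2$. Then $h:=g^{2^{n-2}}$ has order exactly $4$, so $h^4=1$ and $h^2\neq 1$.

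\textbf{Step 3: the idempotent and the contradiction.} Set $e:=2^{-1}(1+h^2)\in RG$. Using $h^4=1$ and the centrality of $2^{-1}$,
\[
e^2=4^{-1}(1+2h^2+h^4)=4^{-1}(2+2h^2)=2^{-1}(1+h^2)=e,
\]
so $e$ is an idempotent, and Step 1 forces $e\in\{0,1\}$.
\begin{itemize}
\item If $e=1$, then $1+h^2=2$, so $h^2=1$, which is false.
\item If $e=0$, then $1+h^2=0$ in $RG$. But $1$ and $h^2$ are distinct elements of the basis $G$ of the free $R$-module $RG$. Hence the coefficient of $h^2$ in $1+h^2$ is $1\neq 0$, a contradiction.
\end{itemize}
Therefore every $g\in G$ satisfies $g^2=1$, that is, $G$ has exponent $2$.

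The only delicate point is Step 1, specifically checking that Proposition~\ref{memeber} applies verbatim to $RG$. It does, because $RG$ is W$\Delta$U by hypothesis and $3\in\Delta(RG)$ is assumed directly. Once $2$ is invertible and $Id(RG)$ is trivial, the rest is the elementary computation above.
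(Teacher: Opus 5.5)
Your proof is correct, and it takes a different route from the paper's.

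\textbf{The paper's route.} The paper fixes the minimal $k$ with $g^{2^k}=1$. It uses $3\in\Delta(RG)$ to get $1+g^{2^k}=-g^{2^k}+3g^{2^k}\in U(RG)$. It then cancels $1+g^{2^{k-1}}$ in $(1-g^{2^{k-1}})(1+g^{2^{k-1}})=0$ to contradict minimality.

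\textbf{Your route.} You extract from Proposition~\ref{memeber} two structural facts, $2\in U(RG)$ and $Id(RG)=\{0,1\}$, and then exhibit the nontrivial idempotent $2^{-1}(1+h^2)$.

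\textbf{Why yours is the sound one.} In the paper's argument the unit actually produced is $1+g^{2^k}=2$. The cancellation instead needs $1+t$ to be a unit for $t=g^{2^{k-1}}$, which has order $2$. That does not follow from anything preceding it. In fact $1+t$ is never a unit in a nonzero group ring, since $(1+t)(1-t)=0$ with $1-t\neq 0$. Your use of the trivial-idempotent property, which comes from W$\Delta$U together with $2\in U(RG)$, is exactly the ingredient that makes the contradiction rigorous.

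\textbf{Your argument gives more than the statement.} Your computation uses only that $t:=h^2$ has order $2$: for every $t\in G$ of order $2$, the element $2^{-1}(1+t)$ is an idempotent different from $0$ and $1$. So Step~2 is unnecessary. The same argument shows that $G$ has no element of order $2$ at all. Since $G$ is a $2$-group, it is therefore trivial, which is considerably stronger than the stated exponent-$2$ conclusion.

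\textbf{A small correction.} For $R=0$ the statement is not vacuous but false: $RG=0$ is W$\Delta$U and $3=0\in\Delta(RG)$ for every $G$. So $R\neq 0$ is a genuinely needed standing convention, not merely a harmless one.
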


\begin{proof}
We first show that, for any \(g \in G\) and any \(k \in \mathbb{N}\), we have \(1 + g^{2^k} \in U(RG)\). Since \(R\langle g \rangle\) is a good subring of \(RG\), Lemma~\ref{goodsubring} tells us that \(R\langle g \rangle\) is also a \(W\Delta U\) ring. Thus, we may assume without loss of generality that the element \(g\) is central lying in the center. So, because \(G\) is a \(2\)-group, there is a minimal positive integer \(k\) such that \(g^{2^k} = 1\). Therefore, \(1 - g^{2^k} = 0\). Next, using the hypothesis \(3 \in \Delta(RG)\), we obtain
	\[
	1 + g^{2^k} = 1 - g^{2^k} - g^{2^k} + 3g^{2^k} = -g^{2^k} + 3g^{2^k} \in U(RG) + \Delta(RG) \subseteq U(RG).
	\]
Now, consider the product \((1 - g^{2^{k-1}})(1 + g^{2^{k-1}}) = 0\). Since \(1 + g^{2^{k-1}} \in U(RG)\), we must have \(1 - g^{2^{k-1}} = 0\), i.e., \(g^{2^{k-1}} = 1\), thus contradicting the minimality of \(k\). Hence, such a \(k\) cannot exist unless \(k = 1\), which means that \(g^2 = 1\) for all \(g \in G\). That is why, \(G\) is of exponent \(2\) after all.
\end{proof}

\bigskip

\noindent{\bf Acknowledgement.} This work is based upon research funded by Iran National Science Foundation (INSF) under project No. 40402401.


\vskip4.0pc

\begin{thebibliography}{99}
	
\bibitem{16}
H. Chen, {\it On strongly J-clean rings}, Commun. Algebra {\bf 38} (2010), 3790--3804.	

\bibitem{15}
W. Chen, {\it On constant products of elements in skew polynomial rings}, Bull. Iran. Math. Soc. {\bf 41}(2) (2015), 453--462.

\bibitem{cc}
W. Chen and S. Cui, {\it On clean rings and clean elements}, Southeast Asian Bull Math {\bf 32} (2008), 855--861.

\bibitem{ch}
H. Chen and M. Sheibani, {\it Strongly weakly nil-clean rings}, J. Algebra Appl. {\bf 16} (2017).

\bibitem{con}
I. G. Connell, {\it On the group ring}, Can. J. Math. {\bf 15} (1963), 650--685.

\bibitem{D}
P. V. Danchev, {\it Rings with Jacobson units}, Toyama Math. J. {\bf 38}(1) (2016), 61--74.

\bibitem{wuu}
P. V. Danchev, {\it Weakly UU rings}, Tsukuba J. Math. {\bf 40} (2016), 101--118.

\bibitem{wuj}
P. V. Danchev, {\it Weakly JU rings},  Missouri J. Math. Sci. {\bf 29}(2) (2017), 184--196.

\bibitem{Dj}
P. Danchev, A. Javan, O. Hasanzadeh and A. Moussavi, {\it Rings in which all elements are the sum of a central element and an element from $\Delta(R)$}, Izv. Saratov Univ. (Math., Mech. \& Inform.) {\bf 26}(2) (2026).

\bibitem{12}
P. V. Danchev and T. Y. Lam, {\it Rings with unipotent units}, Publ. Math. (Debrecen) {\bf 88}(3-4) (2016), 449--466.

\bibitem{7}
F. Karaba\c{c}ak, M. T. Kosan, T. Quynh and D. Tai, {\it A generalization of UJ-rings}, J. Algebra Appl. {\bf 20} (2021).

\bibitem{14}
M. T. Kosan, A. Leroy and J. Matczuk, {\it On UJ-rings}, Commun. Algebra {\bf 46}(5) (2018), 2297--2303.

\bibitem{9}
M. T. Kosan, S. Sahinkaya and Y. Zhou, {\it On weakly clean rings}, Commun. Algebra {\bf 45}(8) (2017), 3494--3502.

\bibitem{23}
T. Y. Lam, {\it Exercises in Classical Ring Theory}, Springer-verlag, New York (2003), second edition.

\bibitem{lam}
T. Y. Lam, {\it A First Course in Noncommutative Rings}, Springer-verlag, New York (2001), second edition.
	
\bibitem{2}
A. Leroy and J. Matczuk, {\it Remarks on the Jacobson radical rings}, Modules and Codes, Contemp. Math. {\bf 727} (2019), 269--276.

\bibitem{3}
J. Levitzki, {\it On the structure of algebraic algebras and related rings}, Trans. Am. Math. Soc. {\bf 74} (1953), 384--409.

\bibitem{Mari}
M. Marianne, {\it Rings of quotients of generalized matrix rings}, Commun. Algebra {\bf 15}(10) (1987), 1991--2015.

\bibitem{8}
W. K. Nicholson, {\it Lifting idempotents and exchange rings}, Trans. Am. Math. Soc. {\bf 229} (1977), 269--278.

\bibitem{25}
W. K. Nicholson and Y. Zhou, {\it Clean general rings}, J. Algebra {\bf 291}(1) (2005), 297--311.

\bibitem{wang}
W. Wang, E.R. Puczylowski, L. Li {\it On Armendariz rings and matrix rings with simple 0-multiplication}, Commun. Algebra {\bf 36}(4) (2008), 1514--1519.

\bibitem{26}
Y. Zhou, {\it On clean group rings}, Advances in Ring Theory, Trends in Mathematics, Birkhauser, Verlag Basel/Switzerland, 2010, pp. 335--345.

\end{thebibliography}
\end{document}